\definecolor{red}{rgb}{1,0,0}
\newtheorem{theorem}{Theorem}
\newtheorem{corollary}[theorem]{Corollary}
\newtheorem{lemma}[theorem]{Lemma}
\newtheorem{remark}[theorem]{Remark}
\newtheorem{conjecture}[theorem]{Conjecture}
\newtheorem{example}[theorem]{Example}
\newtheorem{definition}[theorem]{Definition}
\def \Spec{\textup{Spec}}
\def \T{\textup{T}}
\def \rank{\textup{rank}}
\def \diag{\textup{diag~}}
\def \Im{\textup{Im}}
\def \Span{\textup{Span~}}
\def \Tr{\textup{Tr~}}
\def \Re{\textup{Re}}
\def \lcm{\textup{lcm}}
\title{Generalized spectral characterization of mixed graphs\thanks{Supported by the
National Natural Science Foundation of China under Grant No.\,11971376, 11561058 and 11971406.}}
\author{\small Wei Wang$^{{\rm a}}$\quad Lihong Qiu$^{\rm a}$\quad Jianguo Qian$^{\rm b}$ \quad Wei Wang$^{\rm a,c}$\thanks{Corresponding author: wangwei.math@gmail.com.}
\\
{\footnotesize$^{\rm a}$School of Mathematics and Statistics, Xi'an Jiaotong University, Xi'an 710049,P. R. China}\\
{\footnotesize$^{\rm b}$School of Mathematical Sciences, Xiamen University, Xiamen 361005, P. R. China}\\
{\footnotesize$^{\rm c}$College of Information Engineering, Anhui polytechnic University, Wuhu 241000, P. R. China}
}
\date{}
\begin{document}
 \maketitle
\begin{abstract}
A mixed graph $G$ is a graph obtained from a simple undirected graph by orientating a subset of edges.  $G$ is \emph{self-converse} if it is isomorphic to the graph obtained from $G$ by reversing each directed edge.  For two mixed graphs $G$ and $H$ with Hermitian adjacency matrices $A(G)$ and $A(H)$, we say $G$ is $\mathbb{R}$\emph{-cospectral} to $H$ if, for any $y\in \mathbb{R}$, $yJ-A(G)$ and $yJ-A(H)$ have the same spectrum, where $J$ is the all-one matrix. A self-converse mixed graph $G$ is said to be \emph{determined by its generalized spectrum}, if any self-converse mixed graph
that  is $R$-cospectral with $G$ is isomorphic to $G$. Let $G$ be a self-converse mixed graph of order $n$ such that $2^{-\lfloor n/2\rfloor}\det W$ (which is always a real or pure imaginary Gaussian integer) is  square-free in $\mathbb{Z}[i]$, where $W=[e,Ae,\ldots,A^{n-1}e]$, $A=A(G)$ and $e$ is the all-one vector. We prove that, for any  self-converse mixed graph $H$ that is $\mathbb{R}$-cospectral to $G$, there exists a Gaussian rational unitary matrix $U$ such that $Ue=e$, $U^*A(G)U=A(H)$ and $(1+i)U$ is a Gaussian integral matrix. In particular, if $G$ is an ordinary graph (viewed as a mixed graph) satisfying the above condition, then any self-converse mixed graph $H$ that is $\mathbb{R}$-cospectral to $G$ is $G$ itself (in the sense of isomorphism). This strengthens a recent result of the first author.\\

\noindent\textbf{Keywords:} Cospectral; Self-converse; Determined by spectrum; Hermitian adjacency matrix
\end{abstract}
\section{Introduction}
\label{intro}
 Let $G$ be a simple graph with $(0,1)$-adjacency matrix $A(G)$. The \emph{spectrum} of $G$, denoted by $\Spec(G)$, is the multiset of the eigenvalues of $A(G)$. Two graphs $G$ and $H$ are \emph{cospectral} if $\Spec(G) =\Spec (H)$. Trivially, isomorphic graphs are cospectral.  However, the converse is not true in general. A graph $G$ is said to be \emph{determined by its spectrum} (DS for short) if any graph cospectral to $G$ is isomorphic to $G$. It is a fundamental and challenging problem to characterize which graphs are DS.  Although it was conjectured that almost all graphs are DS \cite{haemers}, it is usually extremely difficult to prove a given graph to be DS. For basic results on spectral characterizations of graphs, we refer the readers to the survey papers \cite{ervdamLAA2003,ervdamDM2009}.

In recent years, Wang and Xu \cite{wang2006LAA,wang2006EuJC} and Wang \cite{wang2013ElJC,wang2017JCTB} considered a variant of the above problem. For a graph $G$, the \emph{generalized  spectrum} is the ordered pair $(\Spec (G),\Spec(\overline{G}))$, where $\overline{G}$ denotes the complement of $G$. A graph $G$ is said to be \emph{determined by its generalized spectrum} (DGS for short) if any graph having the same generalized spectrum with $G$ is isomorphic to $G$. For $y\in\mathbb{R}$, two graphs $G$ and $H$ are \emph{$y$-cospectral}  if $yJ-A(G)$ and $yJ-A(H)$ have the same spectrum. Moreover, we say that $G$ and $H$ are \emph{$\mathbb{R}$-cospectral} if $G$ and $H$ are $y$-cospectral for any $y\in \mathbb{R}$.  A classical result of Johnson and Newman \cite{johnson1980JCTB} says that if two graphs are $y$-cospectral for two distinct values of $y$ then they are for all $y$. Therefore, if two graphs $G$ and $H$ are cospectral with cospectral complement, i.e., $G$ and $H$ are $0$-cospectral and $1$-cospectral, then they are $\mathbb{R}$-cospectral.

Let $G$ be a graph with $n$ vertices, $A=A(G)$ and $e$ be the all-one vector of dimension $n$. Let $W(G)=[e,Ae,\ldots,A^{n-1}e]$  be its \emph{walk-matrix}. The following simple arithmetic criterion for graphs being DGS was conjectured in \cite{wang2013ElJC} and finally proved in \cite{wang2017JCTB}.
\begin{theorem}\cite{wang2013ElJC,wang2017JCTB}\label{mainforgraph}
Let $G$ be a graph with $n$ vertices. If $\frac{\det W(G)}{2^{\lfloor n/2 \rfloor}}$ (which is always an integer) is odd and square-free, then $G$ is DGS.
\end{theorem}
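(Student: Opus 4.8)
The plan is to reduce generalized-cospectrality of $G$ and a putative mate $H$ to the existence of a rational orthogonal matrix $Q$ with $Qe=e$ and $Q^{\T}A(G)Q=A(H)$, and then to show that the arithmetic hypothesis forces $Q$ to be a permutation matrix.

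\textbf{Step 1 (reduction to a level-one statement).} Let $H$ have the same generalized spectrum as $G$. By the Johnson--Newman theorem recalled above, $G$ and $H$ are $\mathbb{R}$-cospectral, which is equivalent to: $A(G)$ and $A(H)$ have the same characteristic polynomial $p(x)$, and $e^{\T}A(G)^{k}e=e^{\T}A(H)^{k}e$ for all $k$, i.e.\ $W(G)^{\T}W(G)=W(H)^{\T}W(H)$. As $\det W(G)/2^{\lfloor n/2\rfloor}$ is odd it is nonzero, so $W(G)$ is nonsingular, and then so is $W(H)$ by the Gram identity. From $A\cdot A^{k}e=A^{k+1}e$ and Cayley--Hamilton, $A(G)=W(G)CW(G)^{-1}$ and $A(H)=W(H)CW(H)^{-1}$ with the \emph{same} companion matrix $C$ of $p(x)$. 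Hence $Q:=W(G)W(H)^{-1}$ is rational, $Q^{\T}Q=I$ (Gram identity), $Qe=e$ (since $W(G)$ and $W(H)$ both carry the first standard unit vector to $e$), $Q^{\T}A(G)Q=A(H)$, and moreover $QW(H)=W(G)$ and $A(G)Q=QA(H)$. Let the \emph{level} $\ell$ be the least positive integer with $\ell Q\in M_{n}(\mathbb{Z})$. If $\ell=1$ then $Q$ is an integral orthogonal matrix, and with $Qe=e$ this forces $Q$ to be a permutation matrix, whence $G\cong H$. So everything reduces to proving $\ell=1$.

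\textbf{Step 2 (set-up for the bound on $\ell$).} Set $V:=\ell Q\in M_{n}(\mathbb{Z})$. For a prime $p\mid\ell$, minimality of $\ell$ makes $\bar V:=V\bmod p$ nonzero. Reducing the identities $V^{\T}V=VV^{\T}=\ell^{2}I$, $VW(H)=\ell W(G)$, $Ve=\ell e$, $e^{\T}V=\ell e^{\T}$, $A(G)V=VA(H)$ modulo $p$ gives $\bar V^{\T}\bar V=\bar V\bar V^{\T}=0$, $\bar V\,\overline{W(H)}=0$, $\bar Ve=0=e^{\T}\bar V$, $\overline{A(G)}\bar V=\bar V\,\overline{A(H)}$. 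The first relation makes $\mathrm{col}(\bar V)$ totally isotropic for the standard form; combined with $\bar V\,\overline{W(H)}=0$ it yields $\rank_{\mathbb{F}_{p}}\overline{W(H)}\le n-\rank_{\mathbb{F}_{p}}\bar V$. Since $(\det W(G))^{2}=(\det W(H))^{2}$ by the Gram identity, a bound $\rank_{\mathbb{F}_{p}}\bar V\ge 2$ already forces $p^{2}\mid\det W(G)$.

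\textbf{Step 3 (the arithmetic lemmas; the main obstacle).} It remains to exclude each $p\mid\ell$. For an \emph{odd} prime $p$ the target is $\rank_{\mathbb{F}_{p}}\overline{W(H)}\le n-2$, hence $p^{2}\mid\det W(G)$, contradicting square-freeness. When $\rank\bar V\ge2$ this is immediate; when $\rank\bar V=1$, writing $\bar V=uv^{\T}$ one gets that $u,v$ are isotropic, orthogonal to $e$, with $v^{\T}\overline{W(H)}=0$, and (from $\overline{A(G)}\bar V=\bar V\,\overline{A(H)}$) that $u,v$ are eigenvectors of $\overline{A(G)},\overline{A(H)}$ for one and the same eigenvalue in $\mathbb{F}_{p}$; one then has to use this ``non-main'' eigenvector $v$, together with the self-adjointness encoded in $\overline{W(G)}^{\T}\overline{W(G)}=\overline{W(H)}^{\T}\overline{W(H)}$, to extract a \emph{second} left-null vector of $\overline{W(H)}$. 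For $p=2$ the crude bound is useless because $2^{\lfloor n/2\rfloor}\mid\det W(G)$ is automatic; here one must instead exploit the fine $2$-adic information that ``$\det W(G)/2^{\lfloor n/2\rfloor}$ odd'' carries (it pins down the $2$-adic Smith normal form of $W(G)$), together with the mod-$2$ identities $e^{\T}A^{2m+1}e\equiv0$ and $e^{\T}A^{2m}e\equiv e^{\T}A^{m}e$ that hold because $A(G)$ is symmetric with zero diagonal and hence alternating modulo $2$, to rule out $2\mid\ell$. Granting these, $\ell=1$ and $G$ is DGS. I expect essentially all the work to lie in this last step: the $p^{2}$-refinement for odd $p$ and the mod-$2$ exclusion are precisely where square-freeness and oddness are genuinely used, whereas Steps 1--2 are formal manipulations.
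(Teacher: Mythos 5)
Your Steps 1--2 are correct and are exactly the standard reduction used in the literature (and mirrored in this paper's mixed-graph generalization, Theorems \ref{reduce2unitary}--\ref{dgsper}): the problem becomes showing that the level $\ell$ of the rational orthogonal matrix $Q=W(G)W(H)^{-1}$ equals $1$. But your Step 3 is not a proof; it is a statement of what remains to be proved, and that remainder is the entire theorem. You write ``Granting these, $\ell=1$'' --- the two things you are granting (the exclusion of odd primes $p\mid\ell$ and the exclusion of $p=2$) are precisely the content of the cited papers, and in this paper they occupy all of Sections 3--4 in the Hermitian setting (Lemmas \ref{ellz0}--\ref{solutionmodp2} for odd $p$; Lemmas \ref{fundasol}--\ref{uandx} and the mod-$4$ computation in the proof of Theorem \ref{maineven} for the even prime). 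Moreover, your sketch for odd $p$ with $\rank\bar V=1$ points in a direction that does not work as stated: under the hypothesis $p^2\nmid\det W$ one may well have $\rank_p W=n-1$ exactly, so you cannot in general produce a \emph{second} mod-$p$ left-null vector of $W(H)$; the actual argument must pass to the finer statement that $W^{\T}z\equiv 0\pmod{p^2}$ has a solution $z\not\equiv 0\pmod p$, and then invoke the Smith-normal-form criterion (Lemma \ref{psolutionanddn}) to conclude $p^2\mid d_n\mid\det W$. Producing that solution requires the eigenvector relation $Az_0\equiv\lambda_0 z_0$, the isotropy conditions $z_0^{\T}A^kz_0\equiv 0\pmod{p^2}$, and the rank analysis of $(A-\lambda_0 I,z_0)$ --- none of which you carry out.

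The $p=2$ case is even further from being done: you correctly name the identities $e^{\T}A^{2m+1}e\equiv 0$ and $e^{\T}A^{2m}e\equiv e^{\T}A^{m}e\pmod 2$ and the fact that oddness of $\det W/2^{\lfloor n/2\rfloor}$ pins down $\rank_2 W=\lceil n/2\rceil$, but the deduction that $2\nmid\ell$ from these facts is a delicate mod-$4$ argument (one must show that a hypothetical column $u$ of $2Q$ with $u\not\equiv 0\pmod 2$, satisfying $u^{\T}A^ku\equiv 0\pmod 4$ and lying mod $2$ in the column space of $\tilde W$, forces its coordinate vector $x$ to vanish mod $2$ via the full-rank matrix $\tfrac{1}{2}W^{\T}\tilde W_1$). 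In short: the skeleton is right and matches the paper's strategy, but the theorem is not proved until the two arithmetic exclusions are actually established, and you have supplied neither.
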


Similar result was established for generalized $Q$-spectrum in \cite{qiutoappearDM}. Moreover, Qiu \emph{et al}.~\cite{qiu2019EJC} also gave an analogue of Theorem~\ref{mainforgraph} for Eulerian graphs.  We try to extend Theorem~\ref{mainforgraph} from ordinary graphs to self-converse mixed graphs. A \emph{mixed} graph $G$ is obtained from a simple undirected graph by orientating a subset of edges. For a mixed graph $G$, the \emph{converse} of $G$, denoted by $G^\T$, is the mixed graph obtained from $G$ by reversing each directed edge in $G$.  A mixed graph is said to be \emph{self-converse} if $G^\T$ is isomorphic to $G$.  As a trivial example, each simple undirected graph is self-converse as $G^\T=G$ in this case. For a mixed graph $G$, we use the symbol $u\sim v$ to denote that $uv$ is an undirected edge, and use $u\rightarrow v$ (or $v\leftarrow u$) to denote that $uv$ is a directed edge from $u$ to $v$.  The following definition introduced independently by Liu and Li \cite{liu2015LAA} as well as Guo and Mohar~\cite{guo2017JGT} is a natural generalization of adjacency matrix from ordinary graphs to mixed graphs. We use $\mathcal{G}_n$ to denote the set of all mixed graphs with vertex set $V=[n]=\{1,2,\ldots,n\}$. The subset of all self-converse mixed graphs in  $\mathcal{G}_n$ will be denoted by $\mathcal{G}_n^{sc}$.
\begin{definition} \cite{liu2015LAA,guo2017JGT} Let $G\in \mathcal{G}_n$.  The Hermitian adjacency matrix of $G$ is the matrix $A=(a_{u,v})\in {\mathbb C}^{n\times n}$, where
\begin{equation}
a_{u,v}= \begin{cases}
   1  &\mbox{if $u\sim v$,}\\
   i  &\mbox{if $u\rightarrow v$,}\\
   -i &\mbox{if $u\leftarrow v$,}\\
   0  &\mbox{otherwise.}
   \end{cases}
\end{equation}
\end{definition}

Note that for any mixed graph $G$, $A(G)$ is a Hermitian matrix, that is, $A(G)^*=A(G)$, where $A(G)^*$ denotes the conjugate transpose of $A(G)$. Therefore, all eigenvalues of $A(G)$ are real and $A(G)$ is diagonalizable. Also note that $A(G^\T)$ equals $(A(G))^\T$, the transpose of $A(G)$, and this explains why we use $G^\T$ to denote the converse of $G$.  For a mixed graph $G$, the \emph{(Hermitian) spectrum} of $G$, denoted by $\Spec(G)$, is the multiset of the eigenvalues of $A(G)$.
It was observed in \cite{guo2017JGT} that any mixed graph $G$ is cospectral to its converse $G^\T$ since $A(G^\T)=(A(G))^\T$. Indeed, for any $y\in \mathbb{R}$, $yJ-A(G^\T)$ and $yJ-A(G)$ have the same spectrum.

Besides the operation of reversing all directed edges, Guo and Mohar \cite{guo2017JGT} found another important operation, called  four-way switching, which also preserves the Hermitian spectrum. It turns out that extremely rare mixed graphs are determined by their Hermitian spectra \cite{guo2017JGT,wissing}. Indeed, there are 1,540,944 unlabeled mixed graphs of order 6, only 16 of them are determined by their Hermitian spectra \cite[Table 1]{guo2017JGT}.  In \cite{mohar},  Mohar considered spectral determination of classes of switching equivalent mixed graphs, rather than individual graphs. A mixed graph $G$ is determined by its Hermitian spectrum (in the sense of Mohar \cite{mohar}) if every mixed graph with the same Hermitian spectrum can be obtained from $G$ by a four-way switching, possibly followed by the reversal of all directed edges; see \cite{akbaria} for more results along this line.

In this paper, we consider spectral determination of mixed graphs in the sense of generalized spectra, where the generalized spectrum of $G$ means the ordered pair  $(\Spec(G),\Spec (J-I-A(G))$. We found that although the four-way switching operation preserves the spectrum, it usually changes its generalized spectrum. Due to the aforementioned fact that a non-self-converse mixed graph cannot be determined by any kinds of spectra, it is reasonable to restrict ourselves to self-converse mixed graphs. The following definition is a natural generalization of the DGS problem from ordinary graphs to self-converse mixed graphs.
\begin{definition}\label{defDGS}
\textup{ A mixed graph $G\in \mathcal{G}_n^{sc}$ is said to be \emph{determined by generalized (Hermitian) spectrum} if for any  $H\in \mathcal{G}_n^{sc}$,
\begin{equation}\label{sgs}
(\Spec(H),\Spec({J-I-A(H)}))=(\Spec(G),\Spec({J-I-A(G)}))
 \end{equation}
implies that $H$ is isomorphic to $G$.}
\end{definition}
\begin{remark}
For an ordinary graph $G$, $J-I-A(G)$ is the adjacency matrix of the complement $\overline{G}$. Such an explanation is not available for mixed graphs.
\end{remark}
\begin{remark}
As we shall see later, the assumption that $H\in \mathcal{G}_n^{sc}$ is essential to our discussion. Of course, it is natural and desirable to consider the corresponding concept without the assumption that $H$ is self-converse; see \cite{wissing}.
\end{remark}
For $G\in \mathcal{G}_n$, we also define $W(G)=[e,Ae,A^2e,\ldots,A^{n-1}e]$ and call it the walk-matrix of $G$. As $W(G)$ has complex entries, the determinant of $W(G)$ is usually not real. The following simple result illustrates an important property on $W(G)$ when $G$ is self-converse.
\begin{theorem}\label{Wrealorimag}
Let $G\in \mathcal{G}_n^{sc}$. Then there exists a permutation matrix $P$ such that $\overline{W(G)}=P^{-1}W(G)$. In particular, $\det W(G)$ is real or pure imaginary.
\end{theorem}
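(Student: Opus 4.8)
The plan is to distill the self-converse hypothesis into a single matrix identity relating $\overline{A}$ and $A=A(G)$, and then transport that identity to the columns of the walk-matrix. Since $G$ is self-converse, $A(G)$ and $A(G^\T)$ are permutation-similar, so there is a permutation matrix $P$ with $A(G^\T)=P^{-1}A(G)P$. On the other hand $A$ is Hermitian, so $A^*=A$, and hence $A(G^\T)=A(G)^\T=\overline{A^*}=\overline{A}$. Combining the two gives the key identity
\begin{equation}
\overline{A}=P^{-1}AP .
\end{equation}
(The direction in which one reads the isomorphism $G\cong G^\T$ only affects whether $P$ or $P^{-1}$ appears, and both are permutation matrices.)

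Next I would raise this identity to powers and apply it to $e$. Because entrywise conjugation is a ring homomorphism of $\mathbb{C}^{n\times n}$, iterating gives $\overline{A^k}=\overline{A}^{\,k}=P^{-1}A^kP$ for all $k\ge0$. Since $P$ is a permutation matrix we have $Pe=e$, so, using also that $e$ is real,
\begin{equation}
\overline{A^k e}=\overline{A^k}\,e=P^{-1}A^k(Pe)=P^{-1}(A^k e).
\end{equation}
The columns of $\overline{W(G)}$ are exactly the vectors $\overline{A^k e}$ for $0\le k\le n-1$, so reading the displayed identity column by column yields $\overline{W(G)}=P^{-1}W(G)$, which is the first assertion.

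For the numerical consequence I would take determinants of both sides: $\overline{\det W(G)}=\det\overline{W(G)}=\det(P^{-1})\det W(G)=\varepsilon\det W(G)$ with $\varepsilon=\pm1$, since a permutation matrix has determinant $\pm1$. If $\varepsilon=1$ then $\det W(G)$ is fixed by conjugation, hence real; if $\varepsilon=-1$ then it is negated, hence pure imaginary (possibly $0$). Since $A$ has entries in $\{0,\pm1,\pm i\}$, each $A^ke$ is a vector of Gaussian integers, so $W(G)$ is a Gaussian-integer matrix; being real or pure imaginary, $\det W(G)$ therefore lies in $\mathbb{Z}$ or in $i\mathbb{Z}$.

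I do not anticipate a genuine obstacle here: the whole argument is a short chain of elementary manipulations. The only points needing a little care are the bookkeeping of which side the permutation matrix acts on (settled once the direction of the isomorphism $G\cong G^\T$ is fixed) and the two trivial observations that $Pe=e$ and that conjugation commutes with matrix products.
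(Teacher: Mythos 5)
Your argument is correct and is essentially the paper's own proof: both reduce the self-converse hypothesis to the permutation similarity $\overline{A}=P^{-1}AP$ (equivalently $A(G^\T)=\overline{A(G)}$), use $Pe=e$ to push this through the powers $A^k e$ and hence through the columns of $W(G)$, and then take determinants to conclude $\overline{\det W(G)}=\pm\det W(G)$. The only cosmetic difference is that the paper routes the computation through $W(G^\T)$, identifying it once as $\overline{W(G)}$ and once as $P^{-1}W(G)$, whereas you conjugate $A^k$ directly; the content is identical.
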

\begin{proof}
As $A(G^\T)=(A(G))^\T=\overline{A(G)}$ and $\overline{e}=e$, we have
\begin{equation}
W(G^\T)=\left[e,A(G^\T)e,\ldots,A^{n-1}(G^\T)e\right]=\left[\overline{e},\overline{A(G)e},\ldots,\overline{A^{n-1}(G)e}\right]=\overline{W(G)}.
\end{equation}
On the other hand, as $G$ is self-converse, there exists a permutation matrix $P$ such that $A(G^\T)=P^{-1} A(G)P$. As $Pe=e$, we have
$A^k(G^\T)e=P^{-1}A^k(G)Pe=P^{-1}A^k(G)e$ and hence
\begin{equation}
W(G^\T)=\left[P^{-1}e,P^{-1}A(G)e,\ldots,P^{-1}A^{n-1}(G)e\right]=P^{-1}W(G).
\end{equation}
Thus $\overline{W(G)}=P^{-1}W(G)$. Taking determinants on both sides and noting that $\det P^{-1}=\pm 1$, we obtain
$\det \overline{W(G)}=\pm \det W(G)$. Therefore, $\overline{ \det W(G)}=\pm \det W(G)$, which implies that $\det W(G)$ is real or pure imaginary.
\end{proof}
It is known that $2^{\lfloor n/2 \rfloor}$ always divides $\det W(G)$ for ordinary graphs $G$. We shall show that this fact can be extended to mixed graphs in the sense of Gaussian integers.
We believe that the following generalization of Theorem \ref{mainforgraph} is true.
\begin{conjecture}\label{mainconj}
Let $G\in \mathcal{G}_n^{sc}$. If $\frac{|\det W(G)|}{2^{\lfloor n/2 \rfloor}}$ is odd and square-free, then $G$ is DGS.
 \end{conjecture}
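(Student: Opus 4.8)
The plan is to lift, over the Gaussian integers $\mathbb{Z}[i]$, the argument by which Wang proved Theorem~\ref{mainforgraph} \cite{wang2017JCTB} for ordinary graphs, invoking the self-converse hypothesis precisely at the points where the passage from $\mathbb{Z}$ to $\mathbb{Z}[i]$ would otherwise break. Write $A=A(G)$ and $W=W(G)$, and let $H\in\mathcal{G}_n^{sc}$ be $\mathbb{R}$-cospectral to $G$. First I would prove that $\mathbb{R}$-cospectrality is equivalent to $W^{*}W=W(H)^{*}W(H)$ together with $\Spec(G)=\Spec(H)$: the Hankel matrix $W^{*}W$ has entries $e^{\T}A^{m}e$, and (as in Johnson--Newman \cite{johnson1980JCTB}) the matrix-determinant-lemma expansion of $\det(xI-yJ+A)$ shows that these moments, together with the characteristic polynomial, are exactly the information recorded by the generalized spectrum. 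Since the square-free hypothesis forces $\det W\neq 0$, the matrix $U^{*}:=W(H)W^{-1}$ is the only possible candidate; one checks from $W^{*}W=W(H)^{*}W(H)$ that $U$ is unitary, from the fact that $W$ and $W(H)$ conjugate their adjacency matrices to the companion matrix of the common characteristic polynomial that $U^{*}AU=A(H)$, and from the equality of first columns that $U^{*}e=e$, hence $Ue=e$. Because $A$ and $A(H)$ have entries in $\mathbb{Z}[i]$, the matrix $U$ has entries in $\mathbb{Q}(i)$, the matrices $W$, $W(H)$, $\mathrm{adj}(W)$ are Gaussian-integral, and $W(H)=U^{*}W$ gives $|\det W(H)|=|\det W|$, so the hypothesis is symmetric in $G$ and $H$.

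Next I would control the denominators of $U$. Let $\delta\in\mathbb{Z}[i]$ generate the ideal $\{z\in\mathbb{Z}[i]:zU\text{ is Gaussian-integral}\}$. From $U^{*}=W(H)\,\mathrm{adj}(W)/\det W$ we get $\delta\mid\det W$. By Theorem~\ref{Wrealorimag}, $2^{-\lfloor n/2\rfloor}\det W$ is a real or pure-imaginary Gaussian integer; combining the classical fact $2^{\lfloor n/2\rfloor}\mid\det W$ (which must first be extended to mixed graphs) with $2=-i(1+i)^{2}$, the hypothesis says that, up to a unit, $\det W=(1+i)^{2\lfloor n/2\rfloor}$ times a product of distinct odd Gaussian primes; in particular $|\det W|/2^{\lfloor n/2\rfloor}$ being odd and square-free is the same as $2^{-\lfloor n/2\rfloor}\det W$ being square-free in $\mathbb{Z}[i]$. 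The first new task is then to prove that the $(1+i)$-adic valuation of $\delta$ is at most $1$, i.e. that $(1+i)U$ is already Gaussian-integral, by a mod-$(1+i)$ analysis of $W$ and $U$ paralleling Wang's proof that, for graphs, the witnessing orthogonal matrix has odd level.

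The heart of the argument, and the step I expect to be the main obstacle, is to show that no odd Gaussian prime $\pi$ divides $\delta$. Suppose $\pi\mid\delta$. Scaling the identities $U^{*}U=I$, $U^{*}A^{k}U=A(H)^{k}$ and $Ue=e$ by an appropriate power of $\delta$, dividing out the exact power of $\pi$, and reducing modulo $\pi$ should yield a nonzero matrix $N$ over the residue field with $N^{\T}N\equiv 0$, $N^{\T}A^{k}N\equiv 0$ for all $k$, and $Ne\equiv 0$; thus the column space of $N$ is a nonzero subspace that is isotropic for the relevant symmetric form and orthogonal to the entire $A$-module it generates. Feeding this into the Gaussian analogue of Wang's ``$p$-main vertex'' argument should force the $\pi$-rank of $W$ to drop by at least two, i.e. $\pi^{2}\mid\det W$, contradicting square-freeness. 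Here essentially all the genuinely new work lives: over $\mathbb{Z}[i]$ one must keep track of whether the governing quadratic form is the bilinear $v^{\T}v$ or the Hermitian $v^{*}v=\overline{v}^{\T}v$, and the device that reconciles them is precisely the permutation $P$ with $\overline{A}=P^{-1}AP$ furnished by the self-converse hypothesis (together with Theorem~\ref{Wrealorimag}); making this reconciliation yield a rank drop of two rather than one is where I anticipate the real difficulty.

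Finally, the previous two steps give $\delta\mid 1+i$, so $B:=(1+i)U$ is Gaussian-integral with $B^{*}B=2I$ and $Be=(1+i)e$ --- which is already the statement of the main theorem of the paper. To finish Conjecture~\ref{mainconj} one must upgrade $U$ to a permutation matrix: the norm conditions force every row of $B$ to be either a single entry $1+i$ or a $1$ together with an $i$, and only the ``$1$-and-$i$'' rows obstruct $U$ being a permutation. When $A$ is a symmetric $(0,1)$-matrix this is short --- one shows that for ordinary $G$ the parameter $\delta$ is forced to be a unit (using the reality of $W$), so $U$ is a Gaussian-integral unitary with $Ue=e$, hence a permutation matrix, whence $A(H)=U^{\T}AU$ is again a symmetric $(0,1)$-matrix and $H\cong G$; this is the claimed strengthening of Theorem~\ref{mainforgraph}. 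For a general self-converse mixed $G$, ruling out the ``$1$-and-$i$'' rows --- i.e. showing that any such $U$ either violates self-converseness of $H$ or the Hermitian-adjacency form, or else still produces an $H$ isomorphic to $G$ --- is the remaining part of Conjecture~\ref{mainconj}, and I expect it to be the hardest.
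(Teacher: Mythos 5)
The statement you were given is stated in the paper as a \emph{conjecture}, and the paper does not prove it either: what the paper actually establishes is the partial result (Theorem~\ref{main}) that $\ell(U)\in\{1,1+i\}$ for every $U\in\mathscr{U}_G$, together with a resolution of the remaining case $\ell(U)=1+i$ only when $G$ is an ordinary undirected graph (the corollary in Section~5). Your proposal reconstructs essentially this same program --- reduction to a Gaussian rational unitary $U$ with $U^*A(G)U=A(H)$ and $Ue=e$ (Theorem~\ref{reduce2unitary}), divisibility of the level by $\det W$, elimination of the odd Gaussian primes from the level, the bound that the $(1+i)$-adic valuation of the level is at most $1$, and the ordinary-graph endgame --- and you correctly and honestly flag that eliminating $\ell(U)=1+i$ for a general self-converse mixed $G$ is the step you cannot carry out. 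That is precisely the gap: neither you nor the paper closes it (the paper reduces Conjecture~\ref{mainconj} to Conjecture~\ref{mainconjsim} and leaves the latter open), so what you have is a proposal for Theorem~\ref{main} plus its Section~5 corollary, not a proof of Conjecture~\ref{mainconj}.

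Two places where your sketch of the intermediate steps drifts from what actually works. For the odd-prime case, ``forcing the $\pi$-rank of $W$ to drop by two'' is not the final mechanism: the paper instead produces, from an isotropic approximate eigenvector $z_0$ of $A$ modulo $p$, a solution $z\not\equiv 0\pmod p$ of $W^*z\equiv 0\pmod{p^2}$ (Lemmas~\ref{allsolution} and~\ref{solutionmodp2}), which via the Smith normal form (Lemma~\ref{psolutionanddn}) yields $p^2\mid d_n\mid\det W$; the rank-drop argument appears only as a sub-step (Lemma~\ref{real}) showing $\lambda_0$ is real modulo $p$, and that is indeed where $\overline{A}=P^{-1}AP$ enters, as you anticipated. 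You also need, before any of this, that $\ell$ and $\overline{\ell}$ are associates (Theorem~\ref{realell}), which is what allows the two non-associate Gaussian primes above a split rational prime to be handled simultaneously; your sketch omits this and it is not automatic. For the ordinary-graph endgame, ``the reality of $W$ forces the level to be a unit'' is not how the contradiction is reached: one first classifies all unitaries of level $1+i$ fixing $e$ as $P U_{k,s} Q$ (Lemma~\ref{stdform}), shows that conjugation by $U_{k,s}$ fixes any $(0,1)$-matrix it maps into a matrix with entries in $\{0,1,i,-i\}$ (Lemma~\ref{AEB}), and then derives $U_{k,s}^*W(G_1)=W(G_1)$ with $W(G_1)$ invertible, which is absurd. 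Your ``rows are $1$-and-$i$'' observation is the right starting point for Lemma~\ref{stdform}, but the rest of that paragraph would need to be filled in along these lines.
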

The following example shows that Conjecture \ref{mainconj} would be \emph{false} if we remove the restriction that $H\in \mathcal{G}_n^{sc}$ in Definition \ref{defDGS}.
\begin{example}
 Let $G$ and $H$ be two mixed graphs as shown in Fig.~1. Note that $G$ is self-converse but $H$ is not. Direct calculations show that $\det W(G)=-68=-2^2\times 17$,
 $\det (\lambda I-A(G))= \det (\lambda I-A(H))=\lambda^5-7\lambda^3-4\lambda^2+7\lambda+4$ and
 $\det (\lambda I-(J-I-A(G)))= \det (\lambda I-(J-I-A(H)))=\lambda^5-13\lambda^3-16\lambda^2+5\lambda+4$.
 Thus, $G$ and $H$ are $\mathbb{R}$-cospectral.
 \end{example}
 \begin{figure}[htbp]
\begin{center}
\includegraphics[height=4.2cm]{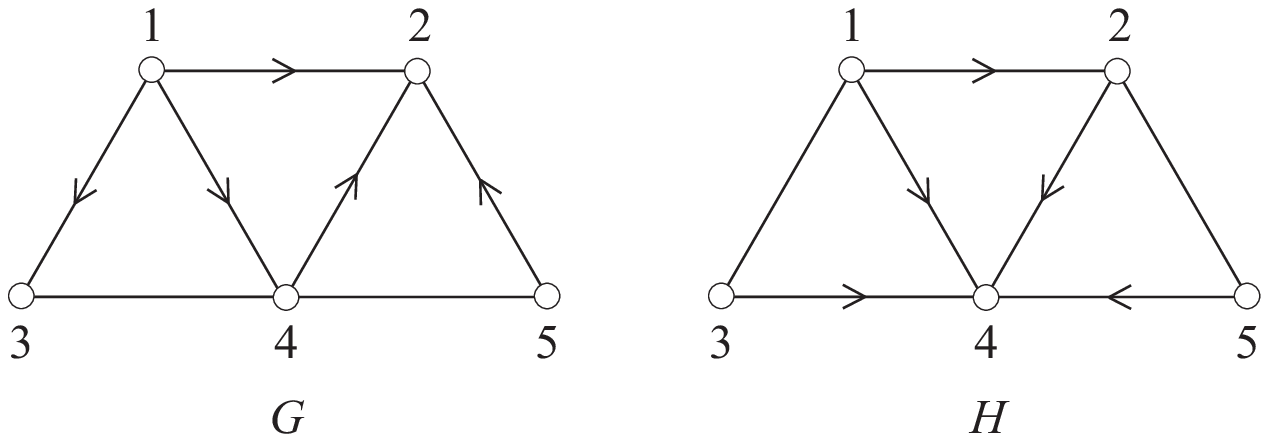}

{\bf Fig.~1~}{Two $\mathbb{R}$-cospectral but not isomorphic graphs}
\end{center}
\label{nK4}
\end{figure}

The main aim of this paper is to give some evidences to support Conjecture  \ref{mainconj}. We have verified the conjecture  for $n\le 6$.  The computer results are given in Table \ref{computer}. The second column gives the number of isomorphic class of $\mathcal{G}_n^{sc}$.  Note that the determinants of all walk matrices in an isomorphic class of $\mathcal{G}_n^{sc}$ have the same absolute value. The third column gives the fractions of DGS graphs in $\mathcal{G}_n^{sc}$, while the fourth column gives the fractions of graphs satisfying the condition of Conjecture \ref{mainconj}. Our experiment shows that, for $n\le 6$, each graph satisfying the condition of Conjecture \ref{mainconj} is DGS.  Although quite a lot of graphs in $\mathcal{G}_n^{sc}$ are DGS,  only a small fractions of them satisfy the condition of Conjecture \ref{mainconj}.  In other words, even Conjecture \ref{mainconj} is true, the condition seems far from necessary.

\begin{table}[htbp]
 \centering
 \caption{\label{computer} Fractions of DGS self-converse mixed graphs}
 \begin{tabular}{rrrrr}
  \toprule
$n$ & Isomorphic Class of $\mathcal{G}_n^{sc}$  & DGS & Conjecture \ref{mainconj}\\
  \midrule
2 &  3 & 1.000&0.333\\
3 & 10 & 1.000&0.100\\
4 & 70& 0.914 &0.086\\
5 & 708& 0.852&0.076\\
6 & 15224  &0.832&0.054\\
  \bottomrule
 \end{tabular}
\end{table}

The rest of the paper is organized as follows. In Section 2, we review some basic facts about Gaussian integers and Gaussian rational unitary matrix.
In Section 3,  we give some  divisibility relations that will be needed later in the paper. In Section 4, we present the main result of the paper together with its proof, which strongly supports our main conjecture above. In Section 5, we verify this conjecture for the special case when $G$ is undirected (and hence trivially self-converse). Conclusions and future work are given in Section 6.
\section{ Gaussian rational unitary matrix and its level}
We recall some facts about Gaussian integers.

The Gaussian integers are the elements of the set $\mathbb{Z}[i]=\{a+bi\colon\,a,b\in \mathbb{Z}\}$, where $i=\sqrt{-1}$. For a Gaussian integer $z=a+bi$, the \emph{norm} of $z$ is $N(z)=a^2+b^2$. Note that $N(z_1z_2)=N(z_1)N(z_2)$. The units of $\mathbb{Z}[i]$ are four powers of $i$, that is, $i,-1,-i,1$. Two Gaussian integers $z_1$ and $z_2$ are \emph{associates} if $b=ua$ for some unit $u$.  It is well known that $\mathbb{Z}[i]$ is a Euclidean domain and hence a unique factorization domain. A nonzero Gaussian integer $z$ is a \emph{Gaussian prime} if it is not a unit and is
divisible only by units and its associates. A positive prime in $\mathbb{Z}$ is a Gaussian prime if and only if $p\equiv 3\pmod 4$. If $p$ is positive prime in $\mathbb{Z}$ such that $p\equiv 1\pmod{4}$ then $p$ can be factored uniquely to a product of two conjugate Gaussian primes (up to multiplication by units and the order of the factors). For example, $5=(1+2i)(1-2i)=(2+i)(2-i)$. In addition, $2$ is not a Gaussian prime, as $2=(1+i)(1-i)$.

We call a Gaussian integer $z$ \emph{even} (resp. \emph{odd}) if $\Re(z)-\Im(z)\equiv 0\pmod{2}$ (resp. $\Re(z)-\Im(z)\equiv 1\pmod{2}$. We call $z\in \mathbb{Z}[i]$ \emph{square-free} if $p^2\not\mid z$ for any Gaussian prime $p$. In particular, 2 is not square-free in $\mathbb{Z}[i]$, but any ordinary odd prime is square-free in $\mathbb{Z}[i]$.  We note that an integer $z\in \mathbb{Z}$ is square-free in $\mathbb{Z}[i]$  if and only if  $z$ is \emph{odd} and square-free (in the ordinary sense).

For a Gaussian prime $p=a+bi$, the quotient ring $\mathbb{Z}[i]/(p)=GF(N(p))$, where $GF(N(p))$ is the field of order $N(p)=a^2+b^2$. As a simple example, $1+i$ is a Gaussian prime and $\mathbb{Z}[i]/(1+i)$ is the binary field $GF(2)$.

A \emph{Gaussian rational} is a complex number whose real part and imaginary part are rational. A \emph{unitary matrix} is a matrix $U\in\mathbb{C}^{n\times n}$ satisfying $U^*U=I$. The following result is a natural generalization of a result for adjacency matrix of an undirected graph obtained in \cite{johnson1980JCTB, wang2006EuJC}. The proof is omitted here since the previous proof is also valid by some slight and evident modification.
\begin{theorem}\label{reduce2unitary}
Let $G\in \mathcal{G}_n$. There exists $H$ such that $G$ and $H$ are cospectral with respect to the generalized spectrum
if and only if there exists a  unitary matrix $U$ satisfying
\begin{equation}\label{gru}
U^*A(G)U=A(H),Ue=e.
\end{equation}
Moreover, if $\det W(G)\neq 0$ then $U=W(G)W^{-1}(H)$ and hence is unique and Gaussian rational.
\end{theorem}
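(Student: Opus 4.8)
The plan is to adapt the Johnson--Newman/Wang--Xu argument for real symmetric adjacency matrices, verifying that passing to complex Hermitian matrices causes no essential difficulty. For the ``if'' direction, suppose a unitary $U$ with $U^*A(G)U=A(H)$ and $Ue=e$ is given. Applying $U^*$ to $Ue=e$ and using $U^*U=I$ yields $U^*e=e$, hence $e^*U=e^*$, and therefore
\begin{equation*}
U^*JU=U^*ee^*U=(U^*e)(e^*U)=ee^*=J .
\end{equation*}
Consequently $U^*\bigl(yJ-A(G)\bigr)U=yJ-A(H)$ for every $y\in\mathbb{R}$, so $yJ-A(G)$ and $yJ-A(H)$ are unitarily similar and therefore cospectral; taking $y=0$ and $y=1$ (the latter after subtracting $I=U^*IU$) gives exactly $(\Spec(G),\Spec(J-I-A(G)))=(\Spec(H),\Spec(J-I-A(H)))$.

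For the ``only if'' direction, write the spectral decompositions $A(G)=\sum_j\lambda_jP_j$ and $A(H)=\sum_j\mu_jQ_j$ into distinct eigenvalues with orthogonal eigenprojections. Since $\Spec(G)=\Spec(H)$, after matching we may assume $\lambda_j=\mu_j$ and $\rank P_j=\rank Q_j$ for all $j$. To recover the main-angle data I would apply the matrix determinant lemma to $xI-(yJ-A(G))=(xI+A(G))-y\,ee^*$, obtaining
\begin{equation*}
\det\bigl(xI-(yJ-A(G))\bigr)=\prod_j(x+\lambda_j)^{\rank P_j}\Bigl(1-y\sum_j\frac{\|P_je\|^2}{x+\lambda_j}\Bigr),
\end{equation*}
where $e^*P_je=\|P_je\|^2$ is a nonnegative real number because $P_j$ is an orthogonal projection. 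Writing the analogous identity for $H$, comparing at $y=0$ and $y=1$, cancelling the common polynomial factor, and using uniqueness of the partial-fraction expansion with the distinct poles $-\lambda_j$, one obtains $\|P_je\|=\|Q_je\|$ for all $j$. Now assemble $U$ blockwise: for each $j$ choose a unitary map from the column space of $Q_j$ onto that of $P_j$ (possible since these have equal dimension) that sends $Q_je$ to $P_je$ when $Q_je\neq0$ --- legitimate since $\|Q_je\|=\|P_je\|$ --- and is arbitrary when $Q_je=0$, in which case $P_je=0$ as well. The orthogonal direct sum $U$ of these maps is unitary and carries the $\lambda_j$-eigenspace of $A(H)$ onto that of $A(G)$, so $A(G)U=UA(H)$, i.e.\ $U^*A(G)U=A(H)$; moreover $Ue=\sum_jU(Q_je)=\sum_jP_je=e$.

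Finally, for the ``moreover'' part, $A(G)U=UA(H)$ and $Ue=e$ give $A(G)^ke=A(G)^kUe=UA(H)^ke$ for all $k\ge0$, hence $W(G)=UW(H)$; when $\det W(G)\neq0$ we also have $\det W(H)=\det W(G)/\det U\neq0$, so $U=W(G)W(H)^{-1}$ is forced, whence unique, and since $A(G),A(H)$ have entries in $\{0,\pm1,\pm i\}\subseteq\mathbb{Z}[i]$ we have $W(G),W(H)\in\mathbb{Z}[i]^{n\times n}$ and $W(H)^{-1}=(\det W(H))^{-1}\operatorname{adj}W(H)$, so $U$ is Gaussian rational. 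I do not expect a genuine obstacle; the one step that needs a moment's care is the main-angle equality in the ``only if'' direction, where one must check that the complex entries of $P_je$ do not spoil the real partial-fraction argument --- but $e^*P_je=\|P_je\|^2\in\mathbb{R}_{\ge0}$ settles this, and the rest is the classical proof.
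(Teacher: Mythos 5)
Your proof is correct and is essentially the argument the paper has in mind: the paper omits the proof of Theorem \ref{reduce2unitary} precisely because the classical Johnson--Newman/Wang--Xu argument carries over, and your write-up is that argument, with the one genuinely new point (that $e^*P_je=\|P_je\|^2$ stays real and nonnegative for Hermitian $A$, so the partial-fraction comparison of main angles survives) correctly identified and handled. Nothing further is needed.
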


Let $G,H\in\mathcal{G}_n^{sc}$. Define
$$\mathscr{U}_G(H)=\{U\textup{~is Gaussian rational unitary}\colon\,U^*A(G)U=A(H)\text{~and~} Ue=e\},$$
and $\mathscr{U}_G= \cup\mathscr{U}_G(H)$, where the union is taken over all $H\in\mathcal{G}_n^{sc}$.

Under the assumption that $\det W(G)\neq 0$, the structure of $\mathscr{U}_G(H)$ is simple. It is either a singleton or an empty set  depending on whether (\ref{sgs}) holds or not. Furthermore, if (\ref{sgs}) holds, then $\mathscr{U}_G(H)=\{W(G)W^{-1}(H)\}$. In addition, if $G$ and $H$ are isomorphic, i.e., there exists a permutation matrix $P$ with $P^*A(G)P=A(H)$, then $\mathscr{U}_G(H)=\{P\}$  as $P$ is clearly Gaussian rational unitary and $Pe=e$. On the other hand, if (\ref{sgs}) holds but $H$ is not isomorphic to $G$, then the unique element in $\mathscr{U}_G(H)$ is not a permutation matrix.

 Therefore, if $G$ is DGS, then either $\mathscr{U}_G(H)=\emptyset$ or  $\mathscr{U}_G(H)$ consists of a single permutation matrix. Thus, $\mathscr{U}_G$ contains only permutation matrices. If $G$ is not DGS, then there exists $H$ such that (\ref{sgs}) holds but $H$ is not isomorphic to $G$. For such an $H$, the matrix in $\mathscr{U}_G(H)$ is not a permutation matrix and hence $\mathscr{U}_G$ contains matrices other than permutation matrices. We summarize this as the following theorem, which was observed in \cite{wang2006EuJC} for ordinary graphs.
 \begin{theorem}\label{dgsper}
 Let $G\in\mathcal{G}_n^{sc}$ such that $\det W(G)\neq 0$. Then $G$ is DGS if and only if $\mathscr{U}_G$ contains only permutation matrices.
 \end{theorem}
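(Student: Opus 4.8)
The plan is to unwind the definitions of DGS and of $\mathscr{U}_G$ using Theorem~\ref{reduce2unitary} in both directions, with the hypothesis $\det W(G)\neq 0$ supplying the crucial uniqueness statement.

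For the ``only if'' direction I would start from an arbitrary $U\in\mathscr{U}_G$, say $U\in\mathscr{U}_G(H)$ with $H\in\mathcal{G}_n^{sc}$, so that $U$ is Gaussian rational unitary, $U^*A(G)U=A(H)$ and $Ue=e$. Rewriting the first equation as $A(G)U=UA(H)$ and iterating against $e$ gives $A^k(G)e=UA^k(H)e$ for every $k$, hence $W(G)=UW(H)$; since $U$ is invertible and $\det W(G)\neq 0$, also $\det W(H)\neq 0$ and $U=W(G)W^{-1}(H)$ is forced. Next, Theorem~\ref{reduce2unitary} tells us that the mere existence of such a $U$ means $G$ and $H$ have the same generalized spectrum, i.e.\ that~(\ref{sgs}) holds; if $G$ is DGS this forces $H$ to be isomorphic to $G$, so there is a permutation matrix $P$ with $P^*A(G)P=A(H)$ and (automatically) $Pe=e$, whence $P\in\mathscr{U}_G(H)$. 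The uniqueness established a moment ago then gives $U=W(G)W^{-1}(H)=P$, so $U$ is a permutation matrix; as $U$ was arbitrary, $\mathscr{U}_G$ contains only permutation matrices.

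For the converse I would assume $\mathscr{U}_G$ consists of permutation matrices and take any $H\in\mathcal{G}_n^{sc}$ for which~(\ref{sgs}) holds. By Theorem~\ref{reduce2unitary} there is a unitary $U$ with $U^*A(G)U=A(H)$, $Ue=e$, and (using $\det W(G)\neq 0$) $U=W(G)W^{-1}(H)$ is Gaussian rational, so $U\in\mathscr{U}_G(H)\subseteq\mathscr{U}_G$; by hypothesis $U$ is a permutation matrix, and $U^*A(G)U=A(H)$ then says exactly that $H$ is isomorphic to $G$. Hence $G$ is DGS.

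I do not expect a genuine obstacle here: the statement is essentially a formal reorganization of the discussion preceding it. The only point requiring a little care is the singleton structure of $\mathscr{U}_G(H)$ when~(\ref{sgs}) holds --- in particular that any permutation matrix realizing an isomorphism $G\cong H$ must coincide with the canonical matrix $W(G)W^{-1}(H)$ --- and this is precisely where invertibility of $W(G)$ enters. Everything else is immediate from Theorem~\ref{reduce2unitary} and the definitions of $\mathscr{U}_G(H)$ and $\mathscr{U}_G$.
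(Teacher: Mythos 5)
Your proposal is correct and follows essentially the same route as the paper, whose ``proof'' is the discussion immediately preceding the theorem: under $\det W(G)\neq 0$, Theorem~\ref{reduce2unitary} makes $\mathscr{U}_G(H)$ either empty or the singleton $\{W(G)W^{-1}(H)\}$, and a permutation matrix realizing an isomorphism $G\cong H$ must coincide with that unique element. Your explicit derivation of $W(G)=UW(H)$ from $A(G)U=UA(H)$ and $Ue=e$ is exactly the uniqueness mechanism the paper relies on.
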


 Let
 \begin{equation}
 \Gamma=\{z\in \mathbb{Z}[i]\colon\,\textup{Re}(z)>0,\textup{Im}(z)\ge 0\}.
 \end{equation}
It is easy to see that $(\Gamma,i\cdot\Gamma,-\Gamma,-i\cdot\Gamma)$ is
a partition of $\mathbb{Z}[i]\setminus\{0\}$, where $i\cdot\Gamma=\{iz\colon\,z\in \Gamma\}$. Thus, for any nonzero Gaussian integer $z$, exactly one of its four associates lies in $\Gamma$.
\begin{definition}\textup{
Let $U$ be a  Gaussian rational unitary matrix. The  \emph{level} of  $U$ is the Gaussian integer $\ell=\ell(U)\in \Gamma$ such that $\ell U$ is a Gaussian integral matrix and $N(\ell)$ is minimal.}
\end{definition}
The assumption $\ell(U)\in \Gamma$ makes $\ell(U)$  unique and hence well-defined. We will make similar convention on least common multiple (LCM) and greatest common divisor (GCD) on Gaussian integers.   We note that $\ell$ is the lcm of all denominators (in the form of reduced fraction) of all  entries in $U$. In particular, if $gU$ is a Gaussian integral matrix then  $\ell\mid g$.  Clearly,  a  Gaussian rational unitary matrix $U$ with $Ue=e$ is a permutation matrix if and only if $\ell(U)=1$.
\begin{theorem}\label{realell}
Let $G\in\mathcal{G}_n^{sc}$ such that $\det W(G)\neq 0$. For any $U\in \mathscr{U}_G$, $\ell(U)$ and $\overline{\ell(U)}$ are associates, that is, $\ell(U)=a$ or  $\ell(U)=a(1+i)$ for some positive integer $a$.
\end{theorem}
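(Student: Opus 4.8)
The plan is to use the self-converseness of \emph{both} $G$ and the target graph $H$ (the one for which $U\in\mathscr{U}_G(H)$), feeding Theorem~\ref{Wrealorimag} into the explicit formula for $U$ supplied by Theorem~\ref{reduce2unitary}. Fix $U\in\mathscr{U}_G(H)$ for some $H\in\mathcal{G}_n^{sc}$. Since $U$ is unitary with $Ue=e$ and $U^*A(G)U=A(H)$, one has $W(H)=U^*W(G)$, so $W(H)$ is invertible because $\det W(G)\neq 0$; hence $U=W(G)W^{-1}(H)$ as in Theorem~\ref{reduce2unitary}. Applying Theorem~\ref{Wrealorimag} to $G$ and to $H$ produces permutation matrices $P_1,P_2$ with $\overline{W(G)}=P_1^{-1}W(G)$ and $\overline{W(H)}=P_2^{-1}W(H)$.

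Next I would compute the entrywise conjugate of $U$:
\[
\overline{U}=\overline{W(G)}\,\bigl(\overline{W(H)}\bigr)^{-1}=P_1^{-1}W(G)\bigl(P_2^{-1}W(H)\bigr)^{-1}=P_1^{-1}\bigl(W(G)W^{-1}(H)\bigr)P_2=P_1^{-1}UP_2.
\]
Now comes the key step. By definition of the level, $\ell(U)U$ is a Gaussian integral matrix; taking entrywise conjugates, $\overline{\ell(U)}\,\overline{U}=\overline{\ell(U)}\,P_1^{-1}UP_2$ is Gaussian integral as well, and multiplying on the left by $P_1$ and on the right by $P_2^{-1}$ (which preserves Gaussian integrality) shows that $\overline{\ell(U)}\,U$ is Gaussian integral. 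Since $\ell(U)$ is the lcm of the denominators of the entries of $U$, this forces $\ell(U)\mid\overline{\ell(U)}$; conjugating this divisibility gives $\overline{\ell(U)}\mid\ell(U)$ too, so $\ell(U)$ and $\overline{\ell(U)}$ are associates in $\mathbb{Z}[i]$.

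It then remains to read off the normal form. Write $\ell(U)=a+bi$; the defining constraint $\ell(U)\in\Gamma$ means $a>0$ and $b\ge 0$. The relation $\ell(U)=u\,\overline{\ell(U)}$ for a unit $u\in\{1,i,-1,-i\}$ forces, after a short case check, either $b=0$ (so $\ell(U)=a$) or $a=b$ (so $\ell(U)=a(1+i)$); the units $-1$ and $-i$ are ruled out by $a>0$, $b\ge 0$. This is exactly the assertion.

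I do not foresee a genuine difficulty here: the whole content is the observation that self-converseness of $H$, and not only of $G$, lets one express $\overline{U}$ as $P_1^{-1}UP_2$, after which the statement reduces to elementary divisibility bookkeeping for the level. The only mild point of care is checking that $W(H)$ is invertible, so that the formula $U=W(G)W^{-1}(H)$ is meaningful, which follows from $W(H)=U^*W(G)$ together with $\det W(G)\neq 0$.
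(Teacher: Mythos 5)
Your proposal is correct and follows essentially the same route as the paper: express $U=W(G)W^{-1}(H)$, use Theorem~\ref{Wrealorimag} for both $G$ and $H$ to get $\overline{U}=P^{-1}UQ$, deduce that $\overline{\ell(U)}U$ is Gaussian integral, and conclude by minimality of the level together with the normalization $\ell(U)\in\Gamma$. The only (harmless) addition is your explicit verification that $W(H)$ is invertible, which the paper subsumes into Theorem~\ref{reduce2unitary}.
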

\begin{proof}
Let $H\in \mathcal{G}_n^{sc}$ such that $U\in  \mathscr{U}_G(H)$. Thus, $U=W(G)W^{-1}(H)$. Since both $G$ and $H$ are self-converse, it follows from Theorem \ref{Wrealorimag} that there exist two permutation matrices $P$ and $Q$ such that $\overline{W(G)}=P^{-1}W(G)$ and $\overline{W(H)}=Q^{-1}W(H)$. Therefore,
$$\overline{U}= \overline{W(G)W^{-1}(H)}=P^{-1}W(G)W(H)^{-1}Q=P^{-1}UQ.$$
Thus,
$\overline{\ell(U)}U=\overline{\ell(U)\overline{U}}=P^{-1}\overline{\ell(U)U}Q$
and hence $\overline{\ell(U)}U$ is Gaussian integral. Moreover, due to the minimality of $\ell(U)$, we have $\ell(U)\mid \overline{\ell(U)}$. Taking conjugate we have $\overline{\ell(U)} \mid \ell(U)$ and hence $\ell(U)$ and $\overline{\ell(U)}$ are associates. Since $\ell(U)\in \Gamma$, we find that the amplitude of $\ell(U)$ is either $0$ or $\frac{\pi}{4}$. That is,  $\ell(U)=a$ or  $\ell(U)=a(1+i)$ for some positive integer $a$.
\end{proof}
\section{Some divisibility relations}
The following lemma was first established for ordinary graphs in~\cite{wang2006PhD}. The original proof can be easily extended to mixed graphs. For simplicity, we shall write $A=A(G)$ and $W=W(G)$ in the rest of the paper.
\begin{lemma}\label{totalsumAk}
Let $G\in \mathcal{G}_n$. Then for any positive integer $k$, $ e^*A^ke\equiv 0\pmod 2.$
\end{lemma}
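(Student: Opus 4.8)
The plan is to expand $e^{*}A^{k}e$ as a sum over walks and exploit the involution that reverses a walk. Writing $A=(a_{u,v})$, we have $(A^{k})_{u,v}=\sum a_{w_{0},w_{1}}a_{w_{1},w_{2}}\cdots a_{w_{k-1},w_{k}}$, the sum over all sequences $w_{0}=u, w_{1},\ldots, w_{k}=v$ in $[n]$; summing over $u,v$ gives $e^{*}A^{k}e=\sum_{W}\mathrm{wt}(W)$, where $W$ runs over all walks of length $k$ with arbitrary endpoints and $\mathrm{wt}(W)=\prod_{j=1}^{k}a_{w_{j-1},w_{j}}\in\{0,\pm1,\pm i\}$, since each nonzero entry of $A$ is a unit of $\mathbb{Z}[i]$. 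Because $A$ is Hermitian, $a_{v,u}=\overline{a_{u,v}}$, so the reversed walk $W'=(w_{k},w_{k-1},\ldots,w_{0})$ satisfies $\mathrm{wt}(W')=\overline{\mathrm{wt}(W)}$.

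Next I would group the sum into orbits of the involution $W\mapsto W'$. A two-element orbit $\{W,W'\}$ contributes $\mathrm{wt}(W)+\overline{\mathrm{wt}(W)}=2\Re(\mathrm{wt}(W))$, an even rational integer since $\mathrm{wt}(W)$ is a Gaussian integer. Hence $e^{*}A^{k}e\equiv\sum_{W=W'}\mathrm{wt}(W)\pmod{2}$, the remaining sum being over \emph{palindromic} walks, those with $w_{j}=w_{k-j}$ for all $j$. If $k=2m+1$ is odd, then $w_{m}=w_{m+1}$, so $\mathrm{wt}(W)$ carries the factor $a_{w_{m},w_{m}}=0$ (a mixed graph has no loops); there is no nonzero palindromic term, and $e^{*}A^{k}e\equiv0\pmod{2}$.

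For $k=2m$ even there is one computation needing care. A palindromic walk of length $2m$ is determined by its first half $(w_{0},\ldots,w_{m})$, an arbitrary sequence in $[n]$, and every such sequence occurs (set $w_{m+t}:=w_{m-t}$). Re-indexing the second half and using $a_{v,u}=\overline{a_{u,v}}$ gives $\prod_{j=m+1}^{2m}a_{w_{j-1},w_{j}}=\overline{\prod_{j=1}^{m}a_{w_{j-1},w_{j}}}$, so $\mathrm{wt}(W)=|z|^{2}$ with $z=\prod_{j=1}^{m}a_{w_{j-1},w_{j}}$; thus $\mathrm{wt}(W)=1$ when $(w_{0},\ldots,w_{m})$ is a walk in the underlying simple graph $\underline{G}$ of $G$ and $\mathrm{wt}(W)=0$ otherwise. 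Therefore $\sum_{W=W'}\mathrm{wt}(W)$ is the number of length-$m$ walks in $\underline{G}$, i.e. $e^{*}B^{m}e$ with $B:=A(\underline{G})$, whence $e^{*}A^{2m}e\equiv e^{*}B^{m}e\pmod{2}$.

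I would finish by strong induction on $k$. The base case is immediate: $e^{*}Ae=\sum_{u<v}(a_{u,v}+a_{v,u})+\sum_{u}a_{u,u}=\sum_{u<v}2\Re(a_{u,v})$ is even. In the inductive step the odd case is done directly, and for $k=2m$ the relation $e^{*}A^{2m}e\equiv e^{*}B^{m}e\pmod{2}$ applies the inductive hypothesis to the mixed graph $\underline{G}$ at the smaller exponent $m<2m$. The only non-routine point is the identity $\mathrm{wt}(W)=|z|^{2}$ for even-length palindromes; everything else is the reversal-of-walks argument of \cite{wang2006PhD}, which carries over once one observes that Hermitian symmetry does the job that ordinary symmetry did for undirected graphs. (A quicker alternative for the even case: reduce $A$ modulo the Gaussian prime $1+i$; every nonzero entry collapses to $1$, so $e^{*}A^{k}e\equiv e^{*}B^{k}e\pmod{1+i}$, hence $\pmod{2}$ since both sides lie in $\mathbb{Z}$, reducing at once to the ordinary-graph statement.)
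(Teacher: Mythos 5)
Your argument is correct, but it takes a genuinely different (and longer) route than the paper. The paper exploits Hermitian symmetry only at the level of matrix entries: first $e^*A^ke\equiv \Tr(A^k)\pmod 2$ because the off-diagonal entries of the Hermitian matrix $A^k$ pair up as $a^{(k)}_{i,j}+\overline{a^{(k)}_{i,j}}=2\Re(a^{(k)}_{i,j})$, and then $\Tr(A^k)=\Tr(AA^{k-1})=\sum_{i\neq j}a^{(1)}_{i,j}a^{(k-1)}_{j,i}$ is itself a sum of conjugate pairs (the diagonal terms vanish since $A$ has zero diagonal), hence even. Two short pairings, no induction, no case split on the parity of $k$. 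You instead push the reversal symmetry all the way down to individual walks; the price is that the fixed points of your involution (palindromic walks) are nontrivial when $k$ is even, which forces the identity $\mathrm{wt}(W)=|z|^2$, the reduction to the underlying undirected graph $\underline{G}$, and a strong induction on $k$ (all of which you carry out correctly — the substitution $j\mapsto 2m-j+1$ in the second half of the palindrome and the quantification of the inductive hypothesis over all mixed graphs both check out). Your parenthetical alternative — reduce $A$ modulo $1+i$ so that $e^*A^ke\equiv e^*B^ke\pmod{1+i}$ and hence $\pmod 2$ since both are rational integers — is the cleanest form of your idea, though it still outsources the even-walk-count statement for ordinary graphs to \cite{wang2006PhD}, whereas the paper's two-line pairing proves the mixed-graph statement from scratch. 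In short: correct, self-contained in the main branch, but the paper shows the same symmetry can be harvested before one ever decomposes into walks.
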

\begin{proof}
Denote the $(i,j)$-entry of $A^k$ as $a^{(k)}_{i,j}$. Note that $A^k$ is Hermitian as $A$ is Hermitian. Thus, we have
$$e^* A^k e=\Tr(A^k)+\sum_{i<j}\left(a^{(k)}_{i,j}+\overline{a^{(k)}_{i,j}}\right)=\Tr(A^k)+\sum_{i<j}2\Re( a^{(k)}_{i,j})\equiv \Tr(A^k)\pmod 2. $$
On the other hand, as all diagonal entries of  $A$ are zero, we have
$$\Tr(AA^{k-1})=\sum_{i\neq j}a^{(1)}_{i,j}a^{(k-1)}_{j,i}=\sum_{i<j}\left(a^{(1)}_{i,j}a^{(k-1)}_{j,i}+\overline{a^{(1)}_{i,j}a^{(k-1)}_{j,i}}\right)=\sum_{i<j}2\Re(a^{(1)}_{i,j}a^{(k-1)}_{j,i})\equiv 0\pmod 2.$$
Therefore, $e^*A^k(G)e\equiv 0\pmod 2.$ This proves the lemma.
\end{proof}
Let $p$ be a Gaussian prime and $M$ be a Gaussian integral matrix, we use $\rank_{p}(M)$ to denote the rank of $M$ over the field $\mathbb{Z}[i]/(p)$. Note that $\rank_{\overline{p}}(\overline{M})=\rank_{p}(M)$ always holds. In addition, if $p$ and $\overline{p}$ are associate  then we have $\rank_{\overline{p}}(M)=\rank_{p}(M)$. In particular, $\rank_{1+i}(M)=\rank_{1-i}(M)$.
\begin{corollary}\label{detrank}
Let $G\in \mathcal{G}_n$. Then the followings hold.\\
(1) $2^{\lfloor\frac{n}{2}\rfloor}\mid \det W$, and\\
(2) $\rank_{1+i} W\le \lceil\frac{n}{2}\rceil$.
\end{corollary}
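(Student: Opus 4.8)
The plan is to deduce both parts from Lemma~\ref{totalsumAk}, combining the theory of Smith normal forms over the principal ideal domains $\mathbb{Z}[i]$ and $\mathbb{Z}$ with a bilinear-form argument over the field $\mathbb{F}_2\cong\mathbb{Z}[i]/(1+i)$ for part~(2) and a ``norm'' argument applied to $W^{*}W$ for part~(1).

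For part~(2) I would reduce modulo $1+i$. Since $i\equiv 1\pmod{1+i}$, the reduction $\bar A:=A\bmod(1+i)$ is a symmetric zero-diagonal matrix over $\mathbb{F}_2$ (namely the adjacency matrix of the simple graph underlying $G$), and $\bar W:=W\bmod(1+i)=[\,\bar e,\bar A\bar e,\ldots,\bar A^{n-1}\bar e\,]$, where $\bar e$ is the all-ones vector. Put $r=\rank_{1+i}W$ and let $U\subseteq\mathbb{F}_2^{n}$ be the column space of $\bar W$, so $\dim U=r$. Since the span of all powers $\bar A^{k}\bar e$ is $\bar A$-invariant, a standard minimal-polynomial argument shows that $\bar e,\bar A\bar e,\ldots,\bar A^{r-1}\bar e$ is a basis of $U$. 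Because $2\mid e^{*}A^{k}e$ forces $(1+i)\mid e^{*}A^{k}e$, Lemma~\ref{totalsumAk} yields $\bar e^{\T}\bar A^{k}\bar e=0$ in $\mathbb{F}_2$ for all $k\ge 1$; equivalently, for the standard nondegenerate form $B(u,v)=u^{\T}v$ on $\mathbb{F}_2^{n}$ one has $B(\bar A^{i}\bar e,\bar A^{j}\bar e)=0$ whenever $i+j\ge 1$. Hence $U':=\Span(\bar A\bar e,\ldots,\bar A^{r-1}\bar e)$ is totally isotropic and $\bar e$ is $B$-orthogonal to $U'$, so $U=\Span(\bar e)+U'\subseteq {U'}^{\perp}$. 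Nondegeneracy of $B$ gives $\dim{U'}^{\perp}=n-\dim U'$, while $\dim U'\ge\dim U-1=r-1$; combining, $r\le n-(r-1)$, i.e.\ $r\le\lceil n/2\rceil$.

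For part~(1) (trivial when $\det W=0$, so assume $\det W\neq 0$) I would study $M:=W^{*}W$, whose $(j,k)$-entry is $e^{*}A^{j+k-2}e$; as each $A^{m}$ is Hermitian this is real, hence an ordinary integer. By Lemma~\ref{totalsumAk} every entry of $M$ is even except $M_{1,1}=e^{*}e=n$, so $M\equiv n\,E_{1,1}\pmod 2$ and the number of invariant factors of $M$ over $\mathbb{Z}$ not divisible by $2$ equals $\rank_{\mathbb{F}_2}(M\bmod 2)=n\bmod 2$. Hence at least $n-(n\bmod 2)=2\lfloor n/2\rfloor$ invariant factors are even, so $2^{2\lfloor n/2\rfloor}\mid\det M$. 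Now $\det M=\overline{\det W}\cdot\det W=N(\det W)$; writing $\det W=(1+i)^{a}z$ with $1+i\nmid z$ gives $N(\det W)=2^{a}N(z)$ with $N(z)$ odd, so $a\ge 2\lfloor n/2\rfloor$. Therefore $(1+i)^{2\lfloor n/2\rfloor}\mid\det W$, and since $(1+i)^{2}=2i$ is an associate of $2$, this says exactly $2^{\lfloor n/2\rfloor}\mid\det W$.

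I expect the one genuinely delicate point to be the dimension count closing part~(2): the crude estimate $\dim U'\ge r-1$ together with the standard bound $\dim U'\le\lfloor n/2\rfloor$ for a totally isotropic subspace only yields $r\le\lfloor n/2\rfloor+1$, which is one too large when $n$ is even. The fix is to note that $\bar e$ is orthogonal to the \emph{whole} of $U'$ (not merely that distinct positive powers of $\bar A$ pair to zero), so that $U\subseteq {U'}^{\perp}$ and the sharper bound $r\le n-\dim U'$ applies; alternatively one may split on the parity of $n$, using that $\bar e^{\T}\bar e=0$ when $n$ is even makes $U$ itself totally isotropic. Everything else — the reduction modulo $1+i$, the minimal-polynomial step, and the invariant-factor estimates — is routine bookkeeping.
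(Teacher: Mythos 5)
Your proof is correct. Part (1) is essentially the paper's own argument: both of you pass to $M=W^{*}W$, use Lemma~\ref{totalsumAk} to see that every entry of $M$ except possibly $m_{1,1}=n$ is even, deduce $2^{2\lfloor n/2\rfloor}\mid\det M=\overline{\det W}\,\det W$, and then pull the factor back to $\det W$ via the prime $1+i$ (your explicit valuation $\det W=(1+i)^{a}z$ and the paper's ``$1+i$ and $\overline{1+i}$ are associates'' step are the same computation). Part (2) is where you genuinely diverge. The paper stays with the matrix $M$: it observes $\rank_{1+i}M\le 1$ and applies Sylvester's inequality $\rank B+\rank C\le n+\rank BC$ to $W^{*}$ and $W$, together with $\rank_{1+i}W^{*}=\rank_{1+i}W$, to get $2\,\rank_{1+i}W\le n+1$. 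You instead work geometrically in $\mathbb{F}_2^{n}$ with the standard bilinear form: the column space $U$ of $\bar W$ decomposes as $\Span(\bar e)+U'$ with $U'$ totally isotropic and $U\subseteq U'^{\perp}$, giving $r\le n-(r-1)$. The two bounds are identical, and both rest on the same content of Lemma~\ref{totalsumAk}; the paper's version is shorter and avoids any discussion of isotropic subspaces, while yours is more self-contained in one sense (no rank inequality) but needs the fact that the first $r$ columns of $\bar W$ are independent, which the paper only establishes later as Lemma~\ref{firstr} (your ``minimal-polynomial argument'' is exactly that lemma's proof, so there is no circularity). Your closing remark correctly identifies why the naive bound $\dim U'\le\lfloor n/2\rfloor$ is insufficient and why the containment $U\subseteq U'^{\perp}$ repairs it.
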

\begin{proof}
Let $M=W^*W$. Let $m_{i,j}$ denote the $(i,j)$-entry of $M$. Then $m_{i,j}=e^*A^{i+j-2}e$. Note that $m_{1,1}=n$. Thus, by Lemma \ref{totalsumAk}, $m_{i,j}\equiv 0\pmod  2$ unless  $(i,j)=(1,1)$ and $n$ is odd. Therefore, $2^n\mid \det M$ when $n$ is even, and $2^{n-1}\mid \det M$ when $n$ is odd. In other words,
\begin{equation}\label{2detM}
2^{2\lfloor\frac{n}{2}\rfloor}\mid \det M.
\end{equation}
 As $2$ and $(1+i)^2$ are associates and $\det M=\det W^*\det W=\overline{\det W}\det W$, we can rewrite (\ref{2detM}) as
\begin{equation}\label{2detconWW}
(1+i)^{4\lfloor\frac{n}{2}\rfloor}\mid \overline{\det W}\det W.
\end{equation}
 As $\overline{1+i}$ and $1+i$ are associates, from (\ref{2detconWW}), we have
 \begin{equation}\label{2detconWW1}
(1+i)^{4\lfloor\frac{n}{2}\rfloor}\mid (\det W)^2
\end{equation}
and hence $(1+i)^{2\lfloor\frac{n}{2}\rfloor}\mid \det W$, i.e., $2^{\lfloor\frac{n}{2}\rfloor}\mid \det W$. This proves (1).

Note that $m_{i,j}\equiv 0\pmod{1+i}$ unless $(i,j)=(1,1)$ and $n$ is odd. We have
\begin{equation}\label{rankM}
\rank_{1+i} M= \begin{cases}
   0 &\mbox{if $n$ is even,}\\
   1  &\mbox{if $n$ is odd.}
   \end{cases}
\end{equation}
Using the familiar inequality that $\rank B+\rank C\le n+\rank BC$ for any matrices of order $n$, we have
\begin{equation}\label{rW2}
\rank_{1+i}W^*+\rank_{1+i}W\le n+\rank_{1+i} M.
\end{equation}
Note that $\rank_{1+i}W^*=\rank_{1+i}W$, which combining with (\ref{rW2}) implies
\begin{equation}\label{rWform}
 \rank_{1+i}W\le \Big\lfloor\frac{n+\rank_{1+i} M}{2}\Big\rfloor.
 \end{equation}
Clearly, using (\ref{rankM}), the right term in (\ref{rWform}) can be reduced to $\lceil\frac{n}{2}\rceil$. This proves (2).
\end{proof}

\begin{lemma}\label{firstr}
Let $G\in\mathcal{G}_n$ and $r=\rank_{1+i}W$. Then the first $r$ columns of $W$ are linearly independent over $\mathbb{Z}[i]/(1+i)$.
\end{lemma}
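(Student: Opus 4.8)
The plan is to invoke the standard stabilization property of Krylov sequences over a field. Write $\mathbb{F}=\mathbb{Z}[i]/(1+i)$, which is the field $GF(2)$, and for a Gaussian integral vector or matrix let an overline denote its entrywise reduction modulo $1+i$. Since reduction modulo $1+i$ is a ring homomorphism $\mathbb{Z}[i]\to\mathbb{F}$, it commutes with matrix multiplication, so $\overline{A^k e}=\bar A^{\,k}\bar e$ for every $k\ge 0$; hence the reduction of $W$ is exactly the Krylov matrix $\bar W=[\bar e,\bar A\bar e,\ldots,\bar A^{\,n-1}\bar e]$ over $\mathbb{F}$, and $r=\rank_{1+i}W$ is its rank over $\mathbb{F}$. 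Let $k$ be the least index with $\bar A^{\,k}\bar e\in\Span(\bar e,\bar A\bar e,\ldots,\bar A^{\,k-1}\bar e)$ over $\mathbb{F}$; such a $k$ exists with $k\le n$ because the $n+1$ vectors $\bar e,\bar A\bar e,\ldots,\bar A^{\,n}\bar e$ cannot be independent in $\mathbb{F}^n$. By the minimality of $k$, the vectors $\bar e,\bar A\bar e,\ldots,\bar A^{\,k-1}\bar e$ are linearly independent over $\mathbb{F}$, and these are precisely the first $k$ columns of $\bar W$. It therefore suffices to prove that $r=k$.

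For this I would show by induction on $m\ge k$ that $\bar A^{\,m}\bar e$ lies in $V:=\Span(\bar e,\bar A\bar e,\ldots,\bar A^{\,k-1}\bar e)$. The base case $m=k$ is the defining property of $k$. For the inductive step, suppose $\bar A^{\,m}\bar e=\sum_{j=0}^{k-1}c_j\bar A^{\,j}\bar e$ with $c_j\in\mathbb{F}$. Then $\bar A^{\,m+1}\bar e=\sum_{j=0}^{k-2}c_j\bar A^{\,j+1}\bar e+c_{k-1}\bar A^{\,k}\bar e$, where the sum lies in $\Span(\bar A\bar e,\ldots,\bar A^{\,k-1}\bar e)\subseteq V$ and the last term lies in $V$ by the case $m=k$; hence $\bar A^{\,m+1}\bar e\in V$. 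Consequently every column of $\bar W$ lies in $V$, so the column space of $\bar W$ equals $V$, which has dimension $k$. Therefore $r=k$, and the first $r$ columns of $W$ are linearly independent over $\mathbb{Z}[i]/(1+i)$, as claimed.

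There is no serious obstacle in this argument; the only point requiring a moment's care is the opening observation that reduction modulo $1+i$ is multiplicative, so that $\bar W$ genuinely is the Krylov matrix generated by $\bar A$ and $\bar e$. Once that is in place the statement is the familiar fact that a Krylov sequence first becomes linearly dependent exactly at the dimension of the cyclic subspace it generates, and remains dependent thereafter.
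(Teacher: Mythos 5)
Your proof is correct and rests on the same key mechanism as the paper's: once the Krylov sequence $\bar e,\bar A\bar e,\bar A^{2}\bar e,\ldots$ over $\mathbb{Z}[i]/(1+i)$ first becomes dependent at index $k$, all later terms stay in the span of the first $k$, so the rank equals $k$. The paper phrases this as a proof by contradiction while you argue directly that $r=k$, but the two arguments are essentially identical.
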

\begin{proof}
Suppose to the contrary that $e,Ae,\ldots,A^{r-1}e$ are linearly dependent. Then there exists an integer $m$ such that $m\le r-1$ and
\begin{equation}\label{span}
A^m e\in \Span\{e,Ae,\ldots,A^{m-1}e\}.
\end{equation}
Using (\ref{span}) twice, we have
\begin{equation}\label{span2}
A^{m+1} e\in \Span\{Ae,A^2e,\ldots,A^{m}e\}\subseteq \Span\{e,Ae,\ldots,A^{m-1}e\}.
\end{equation}
Similarly, for any $m'>m$, we always have $A^{m'} e\in \Span\{e,Ae,\ldots,A^{m-1}e\}.$ Thus,
$\rank_{1+i} W\le m<r=\rank_{1+i} W$. This contradiction completes the proof of this lemma.
\end{proof}
The following result gives a basic relation between $\rank_{1+i}W$ and $\det W$. We note that the real counterpart is easy to obtain using Smith Normal Form and the fact that $2$ is a prime in $\mathbb{Z}$. Unfortunately, similar argument is not valid since $2$ is factorable in $\mathbb{Z}[i]$. Some new techniques have to be used to overcome this difficulty.
\begin{lemma}\label{detvsrank}
Let $G\in \mathcal{G}_n^{sc}$ and $r=\rank_{1+i}W$. Then we have
\begin{equation}\label{rankmid}
2^{n-r}\mid \det W.
\end{equation}
\end{lemma}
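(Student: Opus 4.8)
The plan is to combine the Smith normal form of $W$ over $\mathbb{Z}[i]$ with the self-converse identity $\overline{W}=P^{-1}W$ from Theorem~\ref{Wrealorimag}, and to isolate the single extra ingredient that the factorization $2\sim(1+i)^{2}$ forces upon us. First I would discard the trivial case: if $\det W=0$ the claim is vacuous, so assume $\det W\neq 0$. As $\mathbb{Z}[i]$ is a Euclidean domain, write the Smith normal form $W=UDV$ with $U,V\in GL_{n}(\mathbb{Z}[i])$ and $D=\diag(d_{1},\dots ,d_{n})$, $d_{1}\mid d_{2}\mid\cdots\mid d_{n}$ (normalized, say, to lie in $\Gamma$), so that $\det W$ is an associate of $d_{1}\cdots d_{n}$. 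Since $r=\rank_{1+i}W$ equals the number of $d_{j}$ coprime to $1+i$, and divisibility forces those to be $d_{1},\dots ,d_{r}$, each of $d_{r+1},\dots ,d_{n}$ is divisible by $1+i$; this already yields the ``easy half'' $(1+i)^{\,n-r}\mid\det W$. The whole difficulty is to upgrade this to $(1+i)^{\,2(n-r)}\mid\det W$, which is precisely $2^{\,n-r}\mid\det W$.

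I would then feed in the self-converse hypothesis. By Theorem~\ref{Wrealorimag} there is a permutation matrix $P$ with $Pe=e$ and $\overline{W}=P^{-1}W$, so $W$ and $\overline{W}$ differ by left multiplication by a unimodular matrix; hence they are equivalent and share the same Smith invariants, while the Smith form of $\overline{W}$ is $\diag(\overline{d_{1}},\dots ,\overline{d_{n}})$ up to units. Uniqueness of the Smith form then gives that $d_{j}$ and $\overline{d_{j}}$ are associates for every $j$, so, arguing exactly as in the proof of Theorem~\ref{realell}, each $d_{j}$ is a positive integer $a_{j}$ or $a_{j}(1+i)$ with $a_{j}$ a positive integer. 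Consequently $d_{1},\dots ,d_{r}$ are odd integers, while each of $d_{r+1},\dots ,d_{n}$ contributes at least $1$, and at least $2$ unless it is an odd multiple of $1+i$, to the exponent of $1+i$ in $\det W$. Writing $\mu_{1}$ for the number of $d_{j}$ that are odd multiples of $1+i$, this bookkeeping gives that the exponent of $1+i$ in $\det W$ is at least $2(n-r)-\mu_{1}$, so the lemma reduces to showing $\mu_{1}=0$ --- equivalently, that the cokernel $(\mathbb{Z}[i]/(2))^{n}/W\,(\mathbb{Z}[i]/(2))^{n}$ is a \emph{free} $\mathbb{Z}[i]/(2)$-module. (The fact from Theorem~\ref{Wrealorimag} that $\det W$ is real or pure imaginary already forces $\mu_{1}$ to be even, but that alone does not suffice.)

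For $\mu_{1}=0$ I would use Lemma~\ref{firstr}. Let $\widehat{m}\in\mathbb{Z}[x]$ be a monic lift of the minimal polynomial of $A\bmod(1+i)$ acting on $e$, so that $\widehat{m}(A)e\equiv 0\pmod{1+i}$; write $\widehat{m}(A)e=(1+i)g$ with $g\in\mathbb{Z}[i]^{n}$. A short computation using Lemma~\ref{firstr} (subtract $\mathbb{Z}[i]$-combinations of the first $r$ columns of $W$ from the later ones and divide out $1+i$) reduces $\mu_{1}=0$ to the single assertion that $g\bmod(1+i)$ lies in the $\mathbb{F}_{2}$-span of $e,Ae,\dots ,A^{n-1}e\bmod(1+i)$. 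The self-converse structure now enters a second time: conjugating $\widehat{m}(A)e=(1+i)g$ and using $A^{\T}=\overline{A}$ gives $(1-i)\overline{g}=\widehat{m}(A^{\T})e$, while applying $P^{-1}$ and using $P^{-1}e=e$ gives $(1+i)P^{-1}g=\widehat{m}(P^{-1}AP)e=\widehat{m}(A^{\T})e$; equating the two, $g=-iP\overline{g}$. Reducing this identity modulo $1+i$ and modulo $2$, and combining it with the fact that the $\mathbb{F}_{2}$-span of the $A^{j}e\bmod(1+i)$ is invariant under $P\bmod(1+i)$ (because $P\bmod(1+i)$ fixes $e$ and commutes with the symmetric matrix $A\bmod(1+i)$), should pin down $g\bmod(1+i)$ and yield $\mu_{1}=0$.

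The step I expect to be the real obstacle is exactly this last one. Over $\mathbb{Z}$ the corresponding argument is easy because $2$ is prime, so the exponent of $2$ in $\det W$ is read directly off $\rank_{2}W$; over $\mathbb{Z}[i]$ the prime is $1+i$, and $\rank_{1+i}W$ controls only $(1+i)^{\,n-r}$, i.e.\ half of what is wanted. Squeezing out the remaining half seems to require exploiting $\overline{W}=P^{-1}W$ modulo the \emph{non-field} rings $\mathbb{Z}[i]/(2^{k})$ rather than merely modulo $1+i$; this is the new ingredient hinted at in the remark preceding the statement. A convenient equivalent reformulation for carrying this out: writing $W=[\,W_{1}\mid W_{2}\,]$ with $W_{1}$ the first $r$ columns and $W_{2}=W_{1}C+(1+i)H$ for suitable $C,H$ over $\mathbb{Z}[i]$ (possible by Lemma~\ref{firstr}), the Schur complement of $W_{1}^{*}W_{1}$ in the real symmetric matrix $M=W^{*}W$ (whose $(i,j)$-entry is $e^{*}A^{i+j-2}e$) equals $2H^{*}(I-\Pi)H$, where $\Pi$ is the orthogonal projection onto the column space of $W_{1}$; this displays the first factor $2^{\,n-r}$ of $\det M=|\det W|^{2}$ transparently and reduces the lemma to showing that $(1+i)^{\,n-r}$ divides $\det[\,W_{1}\mid H\,]$, which is once more $\mu_{1}=0$.
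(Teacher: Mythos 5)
Your setup is sound and you have correctly located the difficulty: the Smith normal form together with $\rank_{1+i}W=r$ gives only $(1+i)^{n-r}\mid\det W$, and the entire content of the lemma is the second factor $(1+i)^{n-r}$. But the proposal does not actually supply that second factor. You reduce the lemma to the assertion $\mu_1=0$ (no elementary divisor of $W$ is an odd multiple of $1+i$, i.e.\ the cokernel of $W$ modulo $2$ is a free $\mathbb{Z}[i]/(2)$-module) and then offer only a speculative route to it: the identity $g=-iP\overline{g}$ reduces modulo $1+i$ to $g\equiv Pg\pmod{1+i}$, which merely says that $g$ is fixed by $P$ over $\mathbb{Z}[i]/(1+i)$ and comes nowhere near pinning $g$ down, and you yourself flag this step as ``the real obstacle.'' Moreover, $\mu_1=0$ is a structural statement that the paper never proves and that is a priori stronger than the valuation bound actually needed (one could imagine $d_{r+1}$ an odd multiple of $1+i$ compensated by $d_n$ divisible by $(1+i)^3$); reducing to it may make the problem harder, not easier. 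The closing Schur-complement reformulation invokes an ``orthogonal projection'' that has no meaning over $\mathbb{Z}[i]/(2^k)$ and terminates in the same unproven claim.

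The missing arithmetic ingredient --- where the paper does all the work --- is a congruence modulo $4$, not modulo $1+i$ or $2$. Writing $B=A^r-c_1A^{r-1}-\cdots-c_rI$ with $Be\equiv 0\pmod{1+i}$ (essentially your $\widehat m(A)$, via Lemma~\ref{firstr}) and $g=Be/(1+i)$, the key claim is $e^*BA^kBe=2\,g^*A^kg\equiv 0\pmod 4$ for all $k\ge 0$, i.e.\ $g^*A^kg\equiv 0\pmod 2$. This is proved by a Hermitian-form parity trick: for $k$ odd, writing $h=A^{(k-1)/2}g$, one pairs each off-diagonal term $\overline{h_s}a_{s,t}h_t$ of $h^*Ah$ with its conjugate (the diagonal of $A$ vanishes); for $k$ even one uses $\overline{c}c\equiv c\pmod{1+i}$ together with Lemma~\ref{totalsumAk}. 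Feeding this mod-$4$ information into the Gram matrix of the modified walk matrix $\hat W=(e,Ae,\ldots,A^{r-1}e,Be,BAe,\ldots,BA^{n-r-1}e)$, which has the same determinant as $W$, gives $(1+i)^{4(n-r)}\mid\overline{\det W}\det W$ when $n$ is even, hence $2^{n-r}\mid\det W$; when $n$ is odd it gives $2^{n-r-1}(1+i)\mid\det W$, which is upgraded to $2^{n-r}\mid\det W$ using, as you anticipated, that $\det W$ is real or pure imaginary. Without an estimate of this mod-$4$ type your argument cannot close.
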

\begin{proof}
By Corollary \ref{detrank}, we have $r\le \lceil\frac{n}{2}\rceil$ and $2^{\lfloor{\frac{n}{2}}\rfloor}\mid\det W$. If $r=\lceil\frac{n}{2}\rceil$ then $n-r=\lfloor{\frac{n}{2}}\rfloor$ and hence (\ref{rankmid}) holds. Thus, it suffices to consider the case that $r<\lceil\frac{n}{2}\rceil$.

By Lemma \ref{firstr}, $A^re$ can be expressed as a linear combination of $A^{r-1}e,A^{r-2}e,\ldots,e$ over $\mathbb{Z}[i]/(1+i)$. Let $(c_1,c_2,\ldots,c_r)$ be a 0-1 vector such that  $A^re\equiv c_1A^{r-1}e+c_2A^{r-2}e+\cdots+c_re\pmod{1+i}$. Let $B=A^r-c_1A^{r-1}-c_2A^{r-2}-\cdots-c_rI$. Note that $B$ is Hermitian.

\noindent\textbf{Claim 1}: $e^*BA^kBe\equiv 0\pmod{4}$ for any $k\ge 0$.

As $Be\equiv 0\pmod{1+i}$, $\frac{Be}{1+i}$ is a Gaussian integral vector. Write $g=\frac{Be}{1+i}$. Now we have $e^*BA^kBe=2g^*A^kg$. Thus, it suffice to show that $g^*A^kg\equiv 0\pmod{2}$. We consider two cases:\\
\noindent\emph{Case} 1: $k$ is odd, say, $k=2s+1$.

Write $A^sg=h=(h_1,h_2,\ldots,h_n)$. Note that $A$ is Hermitian with vanishing diagonal entries. We have
 $$g^*A^kg=h^*Ah=\sum_{s\neq t}\overline{h_s}a_{s,t}h_t=\sum_{s< t}(\overline{h_s}a_{s,t}h_t+\overline{h_t}a_{t,s}h_s)=\sum_{s< t}(\overline{h_s}a_{s,t}h_t+\overline{\overline{h_s}a_{s,t}h_t})\equiv 0\pmod 2.$$

 \noindent\emph{Case} 2: $k$ is even, say, $k=2s$.

As $g^*A^kg$ is real, we only need to prove $g^*A^kg\equiv 0\pmod{1+i}$. Note that for any Gaussian integer $c$, $\overline{c}c\equiv c\pmod{1+i}$. Thus,
 \begin{equation}
 g^*A^kg=(A^sg)^*(A^sg)\equiv e^*(A^sg)\pmod{1+i}.
 \end{equation}
  As  $g=\frac{Be}{1+i}$, we are done if
    \begin{equation}\label{mod2for}
  e^*A^sBe\equiv 0\pmod 2.
  \end{equation}
By Lemma \ref{totalsumAk}, $e^*A^je\equiv 0\pmod{2}$ for any $j\ge 1$. Thus, if $s\ge 1$ then  (\ref{mod2for}) holds as $B$ is a linear combination of $A^0,A^1,\ldots,A^{r-1}$. If $s=0$ then  $e^*A^sBe= e^*Be\equiv 0\pmod{1+i}$ as $Be\equiv 0\pmod{1+i}$. Note that $e^*Be$ is real. This implies that $e^*Be\equiv 0\pmod{2}$ and hence (\ref{mod2for}) always holds.

  Combining Case 1 and Case 2, Claim 1 follows.

  Define $\hat{W}=(e,Ae,\ldots,A^{r-1}e,Be,BAe,\ldots,BA^{n-r-1})$. Clearly,
  \begin{equation}\label{equdet1}
  \det \hat{W}=\det W.
  \end{equation}
   Write
$\hat{W}=(W_1,W_2)$, where,
\begin{equation}
W_1=(e,Ae,\ldots,A^{r-1}e)\quad\textup{and}\quad W_2=B(e,Ae,\ldots,A^{n-r-1}e).
\end{equation}  Now we have
\begin{equation}
\hat{W}^*\hat{W}=(\hat{W}^*W_1,\hat{W}^*W_2)=\left(\begin{matrix}
W_1^*W_1&W_1^*W_2\\
W_2^*W_1&W_2^*W_2\\
\end{matrix}
\right).
\end{equation}
Write $W_1^*W_2=(f^{(1)},f^{(2)},\ldots,f^{(n-r)})$. Note that $n-r\ge r+1$ as $r<\lceil\frac{n}{2}\rceil$.

\noindent\textbf{Claim} 2: $f^{(j)}-c_1f^{j-1}-c_2f^{(j-2)}-\cdots-c_rf^{(j-r)}\equiv 0\pmod{4}$ for $j=r+1,r+2,\ldots,n-r$.

Let $f^{(s)}_t$ denote the $t$-entry of $f^{(s)}$ for $s\in\{1,2,\ldots,n-r\}$ and $t\in\{1,2,\ldots,r\}$. Note that $A$ and $B$ are commutative. One easily finds that $f^{(s)}_t=e^*BA^{t+s-2}e$. Thus,
\begin{eqnarray*}
&&f^{(j)}_t-c_1f^{j-1}_t-c_2f^{(j-2)}_t-\cdots-c_rf^{(j-r)}_t\\
&=&e^*B(A^{t+j-2}-c_1A^{t+j-3}-\cdots-c_rA^{t+j-r-2})e\\
&=&e^*BA^{t+j-r-2}(A^{r}-c_1A^{r-1}-\cdots-c_rA^{0})e\\
&= &e^*BA^{t+j-r-2}Be\label{z0WHz0}\\
&\equiv& 0\pmod{4},
\end{eqnarray*}
where the last congruence follow from Claim 1. This proves Claim 2.

Let $F^{(j)}$ denote the $j$-th column of $\hat{W}^*W_2$ for $j=r+1,r+2,\ldots,n-r$, that is,
$$F^{(j)}=\left(\begin{matrix}
f^{(j)}\\
q^{(j)}\\
\end{matrix}\right),$$
where $q^{(j)}$ denotes the $j$-th column of $W_2^*W_2$. Define $\hat{F}^{(j)}=F^{(j)}-c_1F^{(j-1)}-c_2F^{(j-2)}-\cdots-c_rF^{(j-r)}$.  Note that by Claim 1, each entry of $W_2^*W_2$ is a multiple of $4$. Combining this fact with Claim 2 we find that $\hat{F}^{(j)}\equiv 0\pmod 4$. Let
$$M=(\hat{W}^*W_1,{F}^{(1)},{F}^{(2)},\ldots,{F}^{(r)},\hat{F}^{(r+1)},\hat{F}^{(r+2)},\ldots,\hat{F}^{(n-r)}).$$
As $\hat{W}^*\hat{W}=(\hat{W}^*W_1,\hat{W}^*W_2)=(\hat{W}^*W_1,{F}^{(1)},{F}^{(2)},\ldots,{F}^{(n-r)})$, one easily finds that $\det \hat{W}^*\hat{W}=\det M$.  Together with (\ref{equdet1}), we have
\begin{equation}
\det M=\overline{\det W}\det W.
\end{equation}

By Lemma \ref{totalsumAk}, all entries of the real matrix $(\hat{W}^*W_1,{F}^{(1)},{F}^{(2)},\ldots,{F}^{(r)})$ is even,  except the upper left corner when $n$ is odd. As $\hat{F}^{(j)}\equiv 0\pmod 4$ for each $j\in \{r+1,r+2,\ldots,n-r\}$, we find that
$2^{2r}4^{n-2r}\mid \det M$ when $n$ is even and $2^{2r-1}4^{n-2r}\mid \det M$ when $n$ is odd. If $n$ is even then $(1+i)^{4n-4r}\mid \overline{\det W}\det W$ and hence $(1+i)^{2n-2r}\mid \det W$, i.e., $2^{n-r}\mid \det W$. If $n$ is odd then
$(1+i)^{4n-4r-2}\mid \overline{\det W}\det W$ and hence $(1+i)^{2n-2r-1}\mid \det W$, i.e., $2^{n-r-1}(1+i)\mid \det W$. Fortunately, since $G$ is self-converse, $\det W$ is real or pure imaginary  by Lemma \ref{Wrealorimag}. Now $2^{n-r-1}(1+i)\mid \det W$ is equivalent to $2^{n-r}\mid \det W.$ This completes the proof of this lemma.
\end{proof}
\begin{corollary}\label{rankceil}
Let $G\in\mathcal{G}_n^{sc}$. If $\frac{\det W}{2^{\lfloor\frac{n}{2}\rfloor}}$ is odd, then $\rank_{1+i}W=\lceil\frac{n}{2}\rceil$.
\end{corollary}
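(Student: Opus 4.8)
The plan is to obtain Corollary~\ref{rankceil} by simply combining the upper bound on $\rank_{1+i}W$ furnished by Corollary~\ref{detrank}(2) with the divisibility bound of Lemma~\ref{detvsrank}, after translating the hypothesis ``$\det W/2^{\lfloor n/2\rfloor}$ is odd'' into a statement about the exact power of the Gaussian prime $1+i$ dividing $\det W$.

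First I would set $r=\rank_{1+i}W$. By Corollary~\ref{detrank}(2) we already know $r\le\lceil n/2\rceil$, so it suffices to prove the reverse inequality $r\ge\lceil n/2\rceil$. Since $G\in\mathcal{G}_n^{sc}$, Lemma~\ref{detvsrank} applies and gives $2^{n-r}\mid\det W$. Note also that by Corollary~\ref{detrank}(1) the quantity $\det W/2^{\lfloor n/2\rfloor}$ is a genuine Gaussian integer, and that for a Gaussian integer being \emph{odd} is equivalent to not being divisible by $1+i$ (equivalently, having odd norm), so that $2=(1+i)(1-i)$ cannot divide an odd Gaussian integer.

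Then I would argue by contradiction: if $r<\lceil n/2\rceil$, then $n-r>n-\lceil n/2\rceil=\lfloor n/2\rfloor$, hence $n-r\ge\lfloor n/2\rfloor+1$ and so $2^{\lfloor n/2\rfloor+1}\mid\det W$. Dividing by $2^{\lfloor n/2\rfloor}$, this says that $\det W/2^{\lfloor n/2\rfloor}$ is divisible by $2$, hence by $1+i$, i.e. it is even in the sense of Section~2, contradicting the hypothesis. Therefore $r\ge\lceil n/2\rceil$, and together with the upper bound $r\le\lceil n/2\rceil$ we conclude $r=\lceil n/2\rceil$.

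Since the two ingredients, Corollary~\ref{detrank}(2) and Lemma~\ref{detvsrank}, are already in hand, there is no genuine obstacle here; the only point needing a little care is the bookkeeping between divisibility by powers of $2$ and by powers of $1+i$, together with the elementary observation that an odd Gaussian integer is coprime to $1+i$, which is exactly what makes $2^{\lfloor n/2\rfloor+1}\mid\det W$ incompatible with $\det W/2^{\lfloor n/2\rfloor}$ being odd.
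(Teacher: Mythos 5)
Your argument is correct and is essentially the same as the paper's: both take the upper bound $r\le\lceil n/2\rceil$ from Corollary~\ref{detrank}, assume $r<\lceil n/2\rceil$, and use Lemma~\ref{detvsrank} to get $2^{\lfloor n/2\rfloor+1}\mid\det W$, contradicting the oddness hypothesis. Your extra remark that an odd Gaussian integer is exactly one not divisible by $1+i$ is a correct elaboration of the step the paper leaves implicit.
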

\begin{proof}
Let $r= \rank_{1+i}W$. By Lemma \ref{detrank}, $r\le \lceil\frac{n}{2}\rceil$. Suppose to the contrary that $r< \lceil\frac{n}{2}\rceil$. Then $n-r\ge \lfloor\frac{n}{2} \rfloor+1$. By Lemma \ref{detvsrank}, we have $2^{ n-r}\mid \det W$ and hence $2^{\lfloor\frac{n}{2} \rfloor+1}\mid \det W$. This is a contradiction.
\end{proof}
An $n\times n$ Gaussian integral  matrix $U$  is called \emph{unimodular} if $\det U$ is a unit in $\mathbb{Z}[i]$, i.e., $\det U=i^k$ for some $k\in[4]$. The following result is well known.
\begin{lemma}\label{smithdec}
For every $n\times n$ Gaussian integral matrix $M$ with full rank, there exist unimodular matrices $V_1$ and $V_2$ such that $M=V_1SV_2$, where $S=\diag(d_1,d_2,\ldots,d_n)$ is a Gaussian integral diagonal matrix with $d_i$ being the $i$-th entry in the diagonal and $d_i\mid d_{i+1}$ for $i=1,2,\ldots,n-1$.
\end{lemma}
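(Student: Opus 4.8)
The plan is to prove this by the classical existence argument for Smith normal form, specialized to the Euclidean domain $\mathbb{Z}[i]$ with the norm $N$ serving as Euclidean function (recall from Section~2 that $\mathbb{Z}[i]$ is Euclidean, hence a PID). Since the statement only allows left/right multiplication by unimodular matrices, I would first record that each of the three elementary operations is realized by a Gaussian integral matrix of unit determinant: swapping two rows or columns has determinant $-1$; multiplying a row or column by a unit $i^k$ has determinant $i^k$; adding a $\mathbb{Z}[i]$-multiple of one row/column to another has determinant $1$. A product of unimodular matrices is unimodular, so it suffices to bring $M$ to the claimed diagonal form using such operations.

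First I would diagonalize. If $M\neq 0$, choose a nonzero entry of minimal norm and move it to position $(1,1)$ by a row swap and a column swap. Then use division with remainder in $\mathbb{Z}[i]$: for each entry $m_{1,j}$ write $m_{1,j}=q\,m_{1,1}+s$ with $s=0$ or $N(s)<N(m_{1,1})$, and subtract $q$ times the first column from the $j$-th column; do the analogous sweep down the first column. If some entry off the $(1,1)$ position is still nonzero, it has strictly smaller norm than the current $(1,1)$ entry, so we bring it to $(1,1)$ and repeat. Since $N$ takes values in $\mathbb{N}$, this loop halts, leaving the first row and column zero except for an entry $d_1$ at $(1,1)$. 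Recursing on the lower-right $(n-1)\times(n-1)$ block yields, after finitely many stages, a diagonal matrix $\diag(d_1,\ldots,d_n)$; full rank of $M$ forces every $d_j\neq 0$.

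Next I would enforce the divisibility chain. If $d_i\nmid d_{i+1}$ for some $i$, add row $i+1$ to row $i$ so that row $i$ contains both $d_i$ and $d_{i+1}$, then run the clearing procedure above on the $2\times 2$ block in rows and columns $i,i+1$. Because $\gcd(d_i,d_{i+1})$ has strictly smaller norm than $d_i$, the new $(i,i)$ entry is a proper divisor of the old $d_i$ (in particular it divides both old entries), and the new $(i+1,i+1)$ entry is an associate of $\lcm(d_i,d_{i+1})$. Taking as progress measure the lexicographic value of $(N(d_1),N(d_2),\ldots)$, this cleanup terminates with $d_i\mid d_{i+1}$ for all $i$. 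Collecting the accumulated unimodular factors on the left into $V_1$ and on the right into $V_2$ gives $M=V_1SV_2$ with $S=\diag(d_1,\ldots,d_n)$ as required.

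The only points I would treat with care — and the nearest thing to an obstacle here — are (i) that the Euclidean remainder in $\mathbb{Z}[i]$ genuinely has smaller norm, so each pass strictly decreases a well-founded quantity and the procedure halts, and (ii) that the divisibility cleanup never undoes earlier progress, for which the correct invariant is that once position $i$ has been handled, $d_i$ is only ever replaced by a divisor of itself. Neither is a genuine difficulty: this is the standard argument, and the Euclidean structure of $\mathbb{Z}[i]$ noted in Section~2 is exactly what makes it go through. Alternatively, one may simply cite the general existence of Smith normal form over a PID together with the fact that $\mathbb{Z}[i]$, being Euclidean, is a PID, tracking the determinant as a unit $i^k$ exactly as in the statement.
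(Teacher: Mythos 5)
Your proof is correct. The paper offers no proof of this lemma at all --- it simply asserts the result as ``well known'' --- and your argument is precisely the standard existence proof of the Smith normal form over a Euclidean domain, specialized to $\mathbb{Z}[i]$ with the norm $N$ as the Euclidean function; the termination arguments (strict decrease of the minimal norm during diagonalization, lexicographic decrease of $(N(d_1),\ldots,N(d_n))$ during the divisibility cleanup) are exactly the right well-founded measures, and tracking the determinant as a unit $i^k$ matches the paper's definition of unimodular over $\mathbb{Z}[i]$.
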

For a Gaussian integral matrix $M$, the above $S$ is called the \emph{Smith Normal Form} (SNF for short) of $M$ and $d_i$ is called the $i$-th \emph{elementary divisor} of the matrix. The $i$-th elementary divisor is unique up to multiplication by units. For clarity, we always assume $d_i\in \Gamma\cup\{0\}.$
The following lemma appeared in \cite{wang2013ElJC} for ordinary integral matrix. The proof given in  \cite{wang2013ElJC} is of course valid for Gaussian integral matrix. We include the short proof here for the convenience of readers.
\begin{lemma}\cite{wang2013ElJC}\label{psolutionanddn}
Let $p$ be a Gaussian prime and $M$ be an $n\times n$ Gaussian integral matrix. Then  $Mz\equiv 0\pmod{p^2}$ has a solution $z\not\equiv 0\pmod{p}$ if and only if $p^2\mid d_n.$
\end{lemma}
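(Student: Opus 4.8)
The plan is to diagonalize $M$ via its Smith Normal Form and then read off the answer coordinatewise. Write $M=V_1SV_2$ with $V_1,V_2$ unimodular over $\mathbb{Z}[i]$ and $S=\diag(d_1,\dots,d_n)$, $d_1\mid\cdots\mid d_n$ (such a decomposition exists for any $M$, with some $d_i$ allowed to be $0$ when $M$ is singular). Since $\det V_j$ is a unit of $\mathbb{Z}[i]$, it is also a unit modulo $p$ and modulo $p^2$, so $V_1$ and $V_2$ act bijectively on $(\mathbb{Z}[i]/(p^k))^n$ for $k=1,2$. Hence $Mz\equiv 0\pmod{p^2}$ holds if and only if $S(V_2z)\equiv 0\pmod{p^2}$, and with $w=V_2z$ one has $z\not\equiv 0\pmod p$ precisely when $w\not\equiv 0\pmod p$. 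So it suffices to prove that $Sw\equiv 0\pmod{p^2}$ has a solution $w\not\equiv 0\pmod p$ if and only if $p^2\mid d_n$.

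The congruence $Sw\equiv 0\pmod{p^2}$ is simply $p^2\mid d_iw_i$ for every $i$. For the ``if'' direction, when $p^2\mid d_n$ I would take $w=(0,\dots,0,1)^{\T}$: then all the divisibilities hold and $w\not\equiv 0\pmod p$. For the ``only if'' direction I would argue contrapositively. If $p^2\nmid d_n$ then $d_n\neq 0$, so $M$ has full rank, every $d_i\neq 0$, and $p^2\nmid d_i$ for all $i$ because $d_i\mid d_n$. Then, by unique factorization in $\mathbb{Z}[i]$, the relation $p^2\mid d_iw_i$ forces $p\mid w_i$ (the prime $p$ divides each $d_i$ to order at most one, so it must divide $w_i$). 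Hence every solution $w$ of $Sw\equiv 0\pmod{p^2}$ satisfies $w\equiv 0\pmod p$, which is exactly the contrapositive of the claim.

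The proof is short and presents no real difficulty: essentially all the work is packaged into the existence of the Smith Normal Form. The only points needing (minor) care are that the unimodular substitutions remain invertible both modulo $p$ and modulo $p^2$ — immediate since their determinants are units of $\mathbb{Z}[i]$ — together with the bookkeeping of the singular case $d_n=0$, for which both sides of the equivalence hold trivially.
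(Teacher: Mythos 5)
Your proof is correct and follows essentially the same route as the paper's: reduce to the diagonal case via the Smith Normal Form, take $w=(0,\dots,0,1)^{\T}$ when $p^2\mid d_n$, and otherwise use that $p$ divides each $d_i$ to order at most one to force $p\mid w_i$ for every $i$. Your version is in fact slightly more careful than the paper's, which leaves the ``only if'' step and the singular case implicit.
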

\begin{proof}
Let $V_1$ and $V_2$ be unimodular matrices such that  $M=V_1\diag(d_1,d_2,\ldots,d_n)V_2$. The equation $Mz\equiv 0\pmod{p^2}$ is equivalent to $\diag(d_1,d_2,\ldots,d_n)V_2z\equiv 0\pmod{p^2}$. Let $y=V_2z$. Consider $\diag(d_1,d_2,\ldots,d_n)y\equiv 0\pmod{p^2}$. If $p^2\mid d_n$, let $y=(0,0,\ldots,1)^\T$, then $z=V_2^{-1}y\not\equiv 0\pmod{p}$ is a required solution to the original congruence equation. On the other hand, if $p^2\not\mid d_n$, then the fact that $p$ is Gaussian prime implies that $\diag(d_1,d_2,\ldots,d_n)y\equiv 0\pmod{p^2}$ has no solution $y$ with $y\not\equiv 0\pmod{p}$, i.e., $\diag(d_1,d_2,\ldots,d_n)V_2z\equiv 0\pmod{p^2}$ has no solution of $z$ with $z\not\equiv 0\pmod{p}$.
\end{proof}
\begin{lemma}\label{sameSNF}
Let $G\in \mathcal{G}_n^{sc}$. Then for each $i\in\{1,2,\ldots,n\}$, $d_i$ and $\overline{d_i}$ are associates, where $d_i$ is the $i$-th elementary divisor of $W$. In particular, $W$ and $W^*$ have the same \textup{SNF}.
\end{lemma}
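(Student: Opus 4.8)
The plan is to exploit Theorem~\ref{Wrealorimag} together with the invariance of the Smith Normal Form under multiplication by unimodular matrices and under transposition. Recall that a permutation matrix $P$ is Gaussian integral with $\det P=\pm1$, a unit in $\mathbb{Z}[i]$, hence unimodular; likewise $P^{-1}$.

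First I would record two elementary observations. \textbf{(a)} Since $G$ is self-converse, Theorem~\ref{Wrealorimag} supplies a permutation matrix $P$ with $\overline{W}=P^{-1}W$. As $P^{-1}$ is unimodular, $\overline{W}$ and $W$ are equivalent over $\mathbb{Z}[i]$ in the sense of Lemma~\ref{smithdec}, so they have the same SNF; in particular the $i$-th elementary divisor of $\overline{W}$ equals $d_i$. \textbf{(b)} Writing $W=V_1SV_2$ with $V_1,V_2$ unimodular and $S=\diag(d_1,\ldots,d_n)$ as in Lemma~\ref{smithdec}, apply entrywise conjugation to obtain $\overline{W}=\overline{V_1}\,\overline{S}\,\overline{V_2}$, where $\overline{S}=\diag(\overline{d_1},\ldots,\overline{d_n})$, and $\overline{V_1},\overline{V_2}$ are again Gaussian integral with $\det\overline{V_j}=\overline{\det V_j}$ a unit, hence unimodular. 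Since $d_i\mid d_{i+1}$ forces $\overline{d_i}\mid\overline{d_{i+1}}$, this is a Smith decomposition of $\overline{W}$, so the $i$-th elementary divisor of $\overline{W}$ is an associate of $\overline{d_i}$.

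Combining (a) and (b), $d_i$ is an associate of $\overline{d_i}$ for every $i$, which is the first assertion (and with the normalization $d_i\in\Gamma\cup\{0\}$ this becomes a genuine equation $d_i=\overline{d_i}$ or $d_i=\overline{d_i}$ times a unit). For the ``in particular'' part I would use that the SNF is also invariant under transposition: transposing $W=V_1SV_2$ gives $W^{\T}=V_2^{\T}SV_1^{\T}$ with $V_1^{\T},V_2^{\T}$ unimodular. Since $W^{*}=\overline{W}^{\,\T}$, the matrix $W^{*}$ has the same SNF as $\overline{W}$, which by (a) coincides with the SNF of $W$.

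I do not expect a genuine obstacle here. The only points that deserve to be stated carefully are that permutation matrices and conjugates of unimodular matrices are unimodular, and that conjugation is a ring automorphism of $\mathbb{Z}[i]$, so it carries a Smith decomposition to a Smith decomposition; everything else is the standard uniqueness of elementary divisors. If anything is the ``hard part,'' it is merely spelling out step (b) cleanly, which is routine.
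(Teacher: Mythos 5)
Your proposal is correct and follows essentially the same route as the paper: both use Theorem~\ref{Wrealorimag} to get $\overline{W}=P^{-1}W$, compare the two Smith decompositions $\overline{W}=(P^{-1}V_1)SV_2$ and $\overline{W}=\overline{V_1}\,\overline{S}\,\overline{V_2}$ via uniqueness of elementary divisors, and then deduce the claim for $W^*=(\overline{W})^{\T}$ by transposing the decomposition. Nothing is missing; your write-up is just a slightly more explicit version of the paper's argument.
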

\begin{proof}
 By Lemma \ref{smithdec}, there exist unimodular matrices $S$ and $T$ such that $W=S\Lambda T$, where $\Lambda=\diag(d_1,d_2,\ldots,d_n)$ is the SNF of $W$. Thus, $\overline{W}=\overline{S}~\overline{\Lambda}~\overline{T}$. On the other hand,
 by Theorem \ref{Wrealorimag}, there exists a permutation matrix $P$ such that $\overline{W}=P^{-1}W$. Thus,  $\overline{W}=(P^{-1}S)\Lambda T$. Therefore, $\overline{d_i}$ and $d_i$ are associates. Finally, as $W^*=(\overline{W})^\T=T^\T\Lambda (S^\T P)$,  $W$ and $W^*$ have the same SNF.
\end{proof}
\section{Main result}
For convenience, we restate Conjecture \ref{mainconj} in an equivalent form. Note that for $z=au$, where $a$ is a nonnegative integer in $\mathbb{Z}$ and $u$ is a unit in $\mathbb{Z}[i]$,  $z$ is square-free in $\mathbb{Z}[i]$ if and only if $a$ is odd and square-free in $\mathbb{Z}$.
\begin{conjecture}\label{mainconj2}
Let $G\in \mathcal{G}_n^{sc}$. If $\frac{\det W}{2^{\lfloor n/2 \rfloor}}$ is square-free in $\mathbb{Z}[i]$,  then for any $U\in \mathscr{U}_G$, $\ell(U)=1$.
 \end{conjecture}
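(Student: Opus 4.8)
The plan is to prove $\ell(U)=1$ by showing that $v_p(\ell(U))=0$ for every Gaussian prime $p$, where $v_p$ denotes $p$-adic valuation. By Theorem~\ref{realell} we already know that $\ell:=\ell(U)$ equals $a$ or $a(1+i)$ for a positive integer $a$, so it is enough to establish: (i) no odd Gaussian prime divides $\ell$; (ii) $v_{1+i}(\ell)\le 1$; and (iii) $v_{1+i}(\ell)\neq 1$. Throughout I will use two identities valid for $U\in\mathscr{U}_G(H)$. Writing $U=W(G)W(H)^{-1}$, one checks from $U^*A(G)U=A(H)$ and $U^*e=e$ that $U^*A(G)^ke=A(H)^ke$, hence $U^*W(G)=W(H)$, so that the Gram matrix $M:=W^*W=W(H)^*W(H)$ is an invariant of the generalized spectrum. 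Moreover $U=W(G)\,\textup{adj}(W(H))/\det W(H)$ has Gaussian integral numerator, so $\det W(H)\cdot U$ is integral and $\ell\mid\det W(H)$; since $\det W(H)$ is an associate of $\det W$, we obtain $\ell\mid\det W$, and in particular $v_p(\ell)\le v_p(\det W)$ for every $p$.

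For (i), fix an odd Gaussian prime $p$ (so $N(p)$ is odd and $p$ is coprime to $1+i$). Because $\frac{\det W}{2^{\lfloor n/2\rfloor}}$ is square-free, $v_p(\det W)\le 1$, hence $\rank_p W\ge n-1$. The key sub-lemma, the Gaussian analogue of the odd-prime analysis of \cite{wang2006EuJC,wang2017JCTB}, is that $p\mid\ell$ forces $\rank_p W\le n-2$: the denominator of $U$ supplies a nonzero vector in the kernel of $W$ modulo $p$, and the isotropy coming from $U^*U=I$ together with $Ue=e$ makes this kernel self-paired, hence of even dimension, so the rank drops by at least two, two elementary divisors of $W$ are divisible by $p$, and $p^2\mid\det W$. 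This contradicts $v_p(\det W)\le 1$. Therefore $a=2^s$ and $\ell\in\{2^s,2^s(1+i)\}$. For (ii), the square-free hypothesis gives $\rank_{1+i}W=\lceil n/2\rceil$ by Corollary~\ref{rankceil}, and the Gaussian-integral divisibility analysis underlying Lemma~\ref{detvsrank} — now applied to the denominator data of $U$ rather than to $W$ itself — yields $v_{1+i}(\ell)\le 1$, ruling out $s\ge 1$ and leaving $\ell\in\{1,1+i\}$.

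The heart of the matter, and the step I expect to be the main obstacle, is (iii): excluding $\ell=1+i$. Suppose $\ell=1+i$ and set $V=(1+i)U$, a Gaussian integral matrix with $V\not\equiv 0\pmod{1+i}$. From the identities above we obtain
\begin{equation}
V^*V=2I,\quad V^*A(G)V=2A(H),\quad Ve=(1+i)e,\quad V^*W(G)=(1-i)W(H).
\end{equation}
Reducing modulo $1+i$, where $\mathbb{Z}[i]/(1+i)=GF(2)$ and conjugation is trivial, these give, for $\bar V:=V\bmod(1+i)\neq 0$, the isotropy relations
\begin{equation}
\bar V^\top\bar V=0,\quad \bar V^\top\bar A\bar V=0,\quad \bar Ve=0=e^\top\bar V,\quad \bar V^\top\bar W=0,
\end{equation}
where $\bar A=A(G)\bmod(1+i)$ and $\bar W=W\bmod(1+i)$. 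Thus the column space of $\bar V$ is totally isotropic and contained in $(\textup{col}\,\bar W)^{\perp}$, a subspace of dimension $n-\lceil n/2\rceil=\lfloor n/2\rfloor$ by Corollary~\ref{rankceil}. The self-converse structure furnishes, via the proof of Theorem~\ref{realell}, permutation matrices $P,Q$ with $\overline{W(G)}=P^{-1}W(G)$ and $\overline{W(H)}=Q^{-1}W(H)$, whence $\overline V=-i\,P^{-1}VQ$ and therefore $P\bar V=\bar V Q$ over $GF(2)$. The plan is to bootstrap these congruences from $GF(2)$ to $\mathbb{Z}[i]/(2)$ using the \emph{exact} equalities $V^*V=2I$ and $Ve=(1+i)e$ — in the spirit of the odd/even split in the proof of Lemma~\ref{detvsrank} — and to feed in the symmetry $P\bar V=\bar V Q$ together with the maximal rank $\rank_{1+i}W=\lceil n/2\rceil$ to force $\bar V=0$, a contradiction. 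This gives $v_{1+i}(\ell)=0$ and, with (i) and (ii), $\ell=1$.

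The difficulty in (iii) is intrinsic to the prime $1+i$: it is ramified, $2=-i(1+i)^2$, and conjugation acts trivially on the residue field $GF(2)$, so the self-converse symmetry that annihilates the odd-prime contributions and forces $v_{1+i}(\ell)\le 1$ does not, by itself, eliminate a single surviving factor of $1+i$ — this is exactly the gap between the generalized-spectrum invariants and the finer Gaussian-integral structure of $U$. When $G$ is an ordinary graph the gap closes: $\bar A$ is then a symmetric $(0,1)$ matrix, $P$ may be taken to be $I$, and the isotropy relations $\bar V^\top\bar V=\bar V^\top\bar A\bar V=0$ together with $\bar V=\bar V Q$ and the maximality of $\rank_{1+i}W$ collapse $\bar V$ to $0$; this is carried out in Section~5 and yields $\ell=1$, so that $U$ is a permutation matrix and $H\cong G$. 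Establishing the same collapse for an arbitrary self-converse mixed $G$ is the crux that remains, and it is precisely the content of the conjecture.
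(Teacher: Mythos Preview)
The statement you are attempting to prove is not a theorem in the paper but Conjecture~\ref{mainconj2}, which the paper explicitly leaves open. What the paper actually establishes is the weaker Theorem~\ref{main}, namely $\ell(U)\in\{1,1+i\}$; this is precisely your steps (i) and (ii). You acknowledge this yourself in the final paragraph: ``Establishing the same collapse for an arbitrary self-converse mixed $G$ is the crux that remains, and it is precisely the content of the conjecture.'' So your proposal is not a proof; it is an outline that correctly identifies the remaining obstacle and does not overcome it.

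Two further remarks on the parts you do sketch. For (i), your one-line argument (``the kernel is self-paired, hence of even dimension, so $\rank_p W\le n-2$'') is not the paper's route and is not obviously valid in the Hermitian setting over $\mathbb{Z}[i]/(p)$, where conjugation interchanges $p$ and $\bar p$ when these are not associates; the paper instead builds, via Lemmas~\ref{ellz0}--\ref{solutionmodp2}, an explicit vector $z\not\equiv 0\pmod p$ with $W^*z\equiv 0\pmod{p^2}$ and concludes $p^2\mid d_n$ from Lemma~\ref{psolutionanddn}, contradicting square-freeness. Your sketch for (ii) is a pointer to Lemma~\ref{detvsrank}, but the paper's actual proof of $2\nmid\ell$ (Theorem~\ref{maineven}) runs through Lemmas~\ref{fundasol}--\ref{uandx} and a quadratic-form reduction to $\frac{W^*\tilde W_1}{2}$; merely invoking ``the divisibility analysis underlying Lemma~\ref{detvsrank}'' does not supply that argument. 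Finally, your description of how Section~5 handles ordinary graphs is inaccurate: the paper does not argue via the isotropy relations you wrote down, but via the structural Lemma~\ref{stdform} (any level-$(1+i)$ unitary with $Ue=e$ is permutation-equivalent to the block matrix $U_{k,n-2k}$) and the direct computation Lemma~\ref{AEB}, which forces $A=B$ when $A$ is a $(0,1)$-matrix and hence $U_{k,n-2k}^*W(G_1)=W(G_1)$, contradicting $\det W\neq 0$.
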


The main result of this paper is the following
\begin{theorem}\label{main}
Let $G\in \mathcal{G}_n^{sc}$. If $\frac{\det W}{2^{\lfloor n/2 \rfloor}}$ is  square-free in $\mathbb{Z}[i]$, then for any $U\in \mathscr{U}_G$, $\ell(U)\in \{1,1+i\}$.
\end{theorem}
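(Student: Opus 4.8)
The plan is to reduce everything to a prime-by-prime divisibility statement about $\det W$. First note $\det W\neq0$ (otherwise $2^{-\lfloor n/2\rfloor}\det W=0$ is not square-free), so by Theorem~\ref{reduce2unitary} every $U\in\mathscr{U}_G$ equals $W\,W(H)^{-1}$, is Gaussian rational unitary with $Ue=e$, and by Theorem~\ref{realell} has level $\ell(U)=a$ or $a(1+i)$ for a positive integer $a$; hence it suffices to prove $a=1$. Assuming $a\ge2$, I would fix a rational prime $q\mid a$ (so $q\mid\ell:=\ell(U)$), set $M=\ell U$, and record that $M$ is a Gaussian integral matrix with $M\not\equiv0\pmod\pi$ for every Gaussian prime $\pi\mid\ell$ (minimality of $\ell$), that $M^*M=MM^*=N(\ell)I$ and $M^*e=\bar\ell e$ (unitarity and $Ue=e$), that $MW(H)=\ell W$ and hence $M^*W=\bar\ell W(H)$ (from $W=UW(H)$), and — using self-conversity of $G,H$ and Theorem~\ref{Wrealorimag} — that $\overline M=(\bar\ell/\ell)P^{-1}MQ$ for permutation matrices $P,Q$ fixing $e$, so that $\rank_\pi M=\rank_{\bar\pi}M$ and $\rank_\pi W=\rank_{\bar\pi}W$ for all $\pi$. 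Recall also that $\bar\ell$ is an associate of $\ell$ (Theorem~\ref{realell}), so that $\bar p\mid\ell$ whenever a Gaussian prime $p$ divides $\ell$.

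For odd $q$, I would pick a Gaussian prime $p\mid q$; then $p$ is odd, $p\mid\ell$, $p\mid\bar\ell$, so $p^2\mid N(\ell)$ and $MM^*\equiv0\pmod{p^2}$. If $p^2\mid\ell$, then $M^*W=\bar\ell W(H)\equiv0\pmod{p^2}$; taking a column $z$ of $M$ not divisible by $\bar p$ — so that $z^*W$, being a row of $M^*W$, satisfies $z^*W\equiv0\pmod{p^2}$ — and conjugate-transposing gives $W^*z\equiv0\pmod{\bar p^2}$, so Lemmas~\ref{psolutionanddn} and~\ref{sameSNF} force $\bar p^2\mid d_n(W)$, hence also $p^2\mid d_n(W)$ (Lemma~\ref{sameSNF}), hence $q^2\mid d_n(W)\mid\det W$, contradicting square-freeness of $2^{-\lfloor n/2\rfloor}\det W$. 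If $p^2\nmid\ell$ but $\rank_pM\ge2$, then $M^*W\equiv0\pmod p$ gives $\rank_pW\le n-\rank_pM^*=n-\rank_pM\le n-2$, so $p^2\mid d_{n-1}(W)d_n(W)\mid\det W$ — again a contradiction.

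The hard subcase is $p^2\nmid\ell$ together with $\rank_pM=1$. Writing $M\equiv xy^{\T}\pmod p$, the identities above give $y^{\T}e\equiv x^{\T}e\equiv0\pmod p$ and (from $MM^*\equiv M^*M\equiv0\pmod p$) quadratic relations on $x,y$, and one finds $\rank_pW=\rank_pW(H)=n-1$. The goal is to extract from $W^{\T}\overline M=\bar\ell W(H)^{\T}$, read modulo $p^{2}$, a vector $w\not\equiv0\pmod p$ with $W^{\T}w\equiv0\pmod{p^{2}}$ (which would give $q^{2}\mid d_n(W)\mid\det W$ by Lemmas~\ref{psolutionanddn},~\ref{sameSNF}). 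The obstruction is that the obvious rank-one candidates collapse modulo $p$ precisely because of the quadratic relations; so one must bring in the rigidity of $\overline M=(\bar\ell/\ell)P^{-1}MQ$ — i.e.\ the isomorphisms $G\cong G^{\T}$, $H\cong H^{\T}$ — together with the normalisation $Ue=e$, and push the congruences to second order in $p$ in the style of the proof of Lemma~\ref{detvsrank}. I expect this to be where the real effort goes.

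For $q=2$ one has $2\mid\ell$, hence $(1+i)^2\mid\ell$, so $M^*W=\bar\ell W(H)\equiv0\pmod{(1+i)^2}$ with $M\not\equiv0\pmod{1+i}$, which as above yields $(1+i)^2\mid d_n(W)$. This by itself is too weak, since Corollary~\ref{detrank} and square-freeness (which makes $2^{-\lfloor n/2\rfloor}\det W$ odd) show that $2^{\lfloor n/2\rfloor}$ divides $\det W$ exactly. Instead I would rerun the proof of Lemma~\ref{detvsrank} with $r=\rank_{1+i}W=\lceil n/2\rceil$ (Corollary~\ref{rankceil}), now inserting the sharper congruence $\ell W=MW(H)\equiv0\pmod{(1+i)^2}$ to pick up one additional factor of $1+i$ in each of the $\lfloor n/2\rfloor$ ``$B$''-columns of the auxiliary matrix, forcing its determinant — which equals $\det W$ — to be divisible by $2^{\lfloor n/2\rfloor+1}$, a contradiction. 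This case, like the previous one, I expect to require the most care.
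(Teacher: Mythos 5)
Your reduction is sound as far as it goes: $\det W\neq 0$, Theorem~\ref{realell} gives $\ell=a$ or $a(1+i)$, and the problem splits into showing $q\nmid\ell$ for odd rational primes $q$ and $2\nmid\ell$. Your two easy subcases for odd $q$ ($p^2\mid\ell$, and $p^2\nmid\ell$ with $\rank_pM\ge 2$) are correct and match in spirit what the paper folds into the proof of Theorem~\ref{mainodd} via the Smith Normal Form. But the proposal has two genuine gaps, and they are exactly where the theorem lives. First, in the subcase $\rank_p M=1$ you correctly identify the target (a vector $z\not\equiv 0\pmod p$ with $W^*z\equiv 0\pmod{p^2}$, which kills $p$ via Lemmas~\ref{psolutionanddn} and~\ref{sameSNF}) and correctly diagnose that the rank-one candidates collapse, but you then write ``I expect this to be where the real effort goes'' without supplying the argument. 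The paper's resolution occupies Lemmas~\ref{ellz0}(ii) through~\ref{solutionmodp2}: from $M\equiv z_0\gamma\pmod p$ one gets $Az_0\equiv\lambda_0z_0$, self-conversity forces $\Im(\lambda_0)\equiv 0\pmod p$ (else $z_0$ and $P\overline{z_0}$ give two independent kernel vectors of $W^*$, contradicting $\rank_pW=n-1$), a rank analysis shows $\rank_p(A-\lambda_0I,z_0)=n-1$, and the quadratic relation $z_0^*(A-\lambda_0I)z_0\equiv 0\pmod{p^2}$ is then used to correct $z_0$ by $p$-multiples of auxiliary vectors to a genuine solution mod $p^2$. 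None of this is routine, and your sketch does not substitute for it.

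Second, your treatment of $q=2$ does not work as stated. The congruence you propose to ``insert,'' namely $\ell W=MW(H)\equiv 0\pmod{(1+i)^2}$, is an immediate consequence of $2\mid\ell$ and carries no information about $W$ itself, so rerunning Lemma~\ref{detvsrank} with it cannot produce the extra factor of $2$ in $\det W$; indeed under the hypotheses $2^{\lfloor n/2\rfloor}$ divides $\det W$ \emph{exactly}, so no argument that only improves divisibility of $\det W$ can close this case. The paper's Theorem~\ref{maineven} instead exploits the quadratic congruences $u^*A^ju\equiv 0\pmod 4$ satisfied by a column $u\not\equiv 0\pmod{1+i}$ of $\ell U$ (coming from $U_1^*A^jU_1=\overline{\ell}\ell A^j(H)$): writing $u\equiv\tilde Wx\pmod 2$ via Lemma~\ref{fundasol}, the off-diagonal terms of $x^*\tilde W^*A^j\tilde Wx$ vanish mod $4$ and the diagonal terms assemble into $W^*\tilde W_1x\equiv 0\pmod{2(1+i)}$, which contradicts the full-rank statement of Lemma~\ref{rankWW1} unless $x\equiv 0\pmod{1+i}$. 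Your sketch never brings these quadratic relations into the $q=2$ case, and the rank result for $\frac{W^*\tilde W_1}{2}$ (the analogue of the key lemma from the undirected theory) is absent. Both halves therefore need to be supplied before the proposal constitutes a proof.
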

\subsection{The case $p$ is odd}
\begin{theorem}\label{mainodd}
Let $G\in \mathcal{G}_n^{sc}$ such that $\det W\neq 0$. Let $U\in \mathscr{U}_G$ with level $\ell$. For any odd Gaussian prime $p$, if $p^2\not\mid \det W$ then $p\not\mid \ell$.
\end{theorem}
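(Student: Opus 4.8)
\emph{Proof proposal.}
The plan is to prove the contrapositive: assuming $p\mid\ell$, I will show that $p^{2}\mid\det W$. Fix $H$ with $U\in\mathscr{U}_G(H)$; then $U=WW'^{-1}$ with $W'=W(H)$, $W=UW'$, $Ue=e$, and since $e^{*}A^{k}e=e^{*}A(H)^{k}e$ for every $k$, the \emph{real} symmetric Hankel matrix $M:=W^{*}W=W'^{*}W'$ is shared by $G$ and $H$. As $(\det W')U=W\cdot\mathrm{adj}(W')$ is Gaussian integral, $\ell$ divides $\det W'$ up to a unit, so $v_{p}(\ell)\le v_{p}(\det W')=v_{p}(\det W)$, the last equality because $\det W'$ and $\det W$ are associates. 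Hence if $v_{p}(\det W)\ge 2$ there is nothing to prove, while if $v_{p}(\det W)=0$ then $W,W'$ are invertible over the localization $R:=\mathbb{Z}[i]_{(p)}$, so $U$ is $p$-integral and $p\nmid\ell$. Thus I may assume $v_{p}(\det W)=1$, and then $p\mid\ell$ forces $v_{p}(\ell)=1$.

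I now work over $R$ and its residue field $\mathbb{F}=\mathbb{Z}[i]/(p)$. By Lemma~\ref{smithdec}, $v_{p}(\det W)=v_{p}(\det W')=1$ gives $W$ and $W'$ the Smith form $\diag(1,\dots,1,p)$ over $R$; writing $W=S\,\diag(1,\dots,1,p)\,T$ and similarly for $W'$ and computing $U=WW'^{-1}$ explicitly, one checks that $pU$ is $p$-integral and that its reduction vanishes precisely when $\ker_{\mathbb{F}}W=\ker_{\mathbb{F}}W'$. Consequently $p\mid\ell$ is \emph{equivalent} to $\ker_{\mathbb{F}}W\neq\ker_{\mathbb{F}}W'$, so these are distinct lines $\langle v\rangle,\langle v'\rangle$ in $\mathbb{F}^{n}$. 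Since $Wv\equiv0\pmod p$ implies $Mv\equiv0\pmod p$, both lie in $\ker_{\mathbb{F}}M$; moreover $v^{*}Mv=(Wv)^{*}(Wv)\equiv0\pmod{p^{2}}$, and similarly for $v'$. Using $AW=WC$ with $C$ the integer companion matrix of $\det(xI-A)$, $\ker_{\mathbb{F}}W$ is $C$-invariant, hence an eigenline of $\bar C$; the self-converse hypothesis (Theorem~\ref{Wrealorimag}, which gives $\overline W=P^{-1}W$) lets one take $v,v'$ conjugation-fixed, so the relevant eigenvalues $\mu,\mu'$ are roots of $\det(xI-A)\bmod p$.

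To exploit $p\mid\ell$ further, set $T_{0}:=pU$, which is $p$-integral with nonzero reduction $\overline{T_{0}}$ (because $v_{p}(\ell)=1$). From $T_{0}W'=pW$ and $T_{0}^{*}W=pW'$ one gets $\overline{T_{0}}\,\overline{W'}=0$ and $\overline{T_{0}}^{*}\overline W=0$; as the column spaces of $W$ and $W'$ over $\mathbb{F}$ have dimension $n-1$, $\overline{T_{0}}$ has rank one, say $\overline{T_{0}}=\xi\eta^{*}$, and then $\mathrm{colspace}_{\mathbb{F}}(W)=\xi^{\perp}$, $\mathrm{colspace}_{\mathbb{F}}(W')=\eta^{\perp}$ for the Hermitian form on $\mathbb{F}^{n}$. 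Substituting $v'$ into $T_{0}W'=pW$ yields $Wv'\in\langle\xi\rangle\pmod p$. If $\xi$ is \emph{not} isotropic, then $\langle\xi\rangle\cap\mathrm{colspace}_{\mathbb{F}}(W)=0$, so $Wv'\equiv0\pmod p$, i.e.\ $v'\in\ker_{\mathbb{F}}W$ and $\langle v\rangle=\langle v'\rangle$, a contradiction. So there remains the case $\xi$ isotropic, which is exactly the case $\dim\ker_{\mathbb{F}}M=2$, equivalently $M$ has Smith form $\diag(1,\dots,1,p,p)$ over $R$.

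This last case is where I expect the real difficulty to lie. Then $\ker_{\mathbb{F}}M$ is a $2$-dimensional $\bar C$-invariant space containing the two distinct isotropic eigenlines $\langle v\rangle,\langle v'\rangle$, and diagonalizing $M$ by an $R$-congruence to $\diag(1,\dots,1,p\alpha,p\beta)$ turns the relations $v^{\top}Mv\equiv v'^{\top}Mv'\equiv0\pmod{p^{2}}$ into a binary quadratic congruence modulo $p$, which on its own has two projective solutions and so does not yet force $\langle v\rangle=\langle v'\rangle$. The resolution must use the extra rigidity of $M=W^{*}W$---that it is a Hankel matrix, i.e.\ $MC=C^{\top}M$---together with a mod-$p^{2}$ refinement of the isotropy relations obtained from the companion-matrix eigenvector formulas for the lifts of $v$ and $v'$; conceptually, the point is that the ``missing eigenvalue'' $\mu$ (the one with $P_{\mu}e\equiv0$) is determined by the pair $(A,e)$ over $R$ in a way that, because $W^{*}W=W'^{*}W'$, coincides with $\mu'$. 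This is precisely the step carried out for ordinary graphs by Wang and Xu, and transcribing it to $\mathbb{Z}[i]$---where $2=-i(1+i)^{2}$ but every odd prime behaves well---is the heart of the matter; the rest of the argument above is routine.
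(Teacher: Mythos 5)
There is a genuine gap, and you have named it yourself: the entire argument funnels into the case you call ``the heart of the matter'' (the isotropic case, $\dim\ker_{\mathbb{F}}M=2$), and there you only assert that a resolution ``must use'' the Hankel structure of $M$ and a mod-$p^{2}$ refinement, without carrying out any step. Since the non-isotropic case is dispatched in two lines, essentially all of the content of Theorem~\ref{mainodd} lives in the part you leave open, so the proposal does not constitute a proof. For comparison, the paper never compares $\ker_{\mathbb{F}}W$ with $\ker_{\mathbb{F}}W'$ at all: it extracts from $U_{1}=\ell U$ a single vector $z_{0}$ with $W^{*}z_{0}\equiv 0$, $Az_{0}\equiv\lambda_{0}z_{0}$ and $z_{0}^{*}A^{k}z_{0}\equiv 0\pmod{p^{2}}$ (Lemma~\ref{ellz0}), proves $\lambda_{0}$ is real mod $p$ using self-converseness to manufacture a second eigenvector $P\overline{z_{0}}$ and a rank contradiction (Lemma~\ref{real}), establishes $\rank_{p}(A-\lambda_{0}I,z_{0})=n-1$ (Lemma~\ref{allsolution}), and then lifts $z_{0}$ to a solution of $W^{*}z\equiv 0\pmod{p^{2}}$ with $z\not\equiv 0\pmod p$, which forces $p^{2}\mid d_{n}\mid\det W$ via Lemma~\ref{psolutionanddn}. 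That lifting argument (Lemma~\ref{solutionmodp2}) is exactly the missing mod-$p^{2}$ step you gesture at, and it is not routine.

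Two further points you should repair even in the parts you do write out. First, when $p$ is a split Gaussian prime, complex conjugation does not descend to $\mathbb{F}=\mathbb{Z}[i]/(p)$ (since $(\overline{p})\neq(p)$), so ``the Hermitian form on $\mathbb{F}^{n}$'', the perpendicular spaces $\xi^{\perp}$, $\eta^{\perp}$, and the isotropy dichotomy are not well defined as stated; the paper avoids this by working modulo $\lcm(p,\overline{p})=N(p)$ and by first proving (Theorem~\ref{realell}) that $\ell$ and $\overline{\ell}$ are associates, so that $p\mid\ell$ forces $N(p)\mid\ell$. Your case analysis needs the same device. Second, the claim that $\det W$ and $\det W'$ are associates requires the self-converse hypothesis on \emph{both} $G$ and $H$ (so that each determinant is real or pure imaginary, via Theorem~\ref{Wrealorimag}); equality of norms alone does not give it, and you should say where this is used.
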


\begin{lemma}\label{ellz0}
Let $G\in \mathcal{G}_n^{sc}$ such that $\det W\neq 0$.  Let $U\in \mathscr{U}_G$ with level $\ell$. Then we have\\
(\romannumeral1) $\ell\mid d_n$, where $d_n$ is the $n$-th elementary divisor of the \textup{SNF} of $W$.\\
(\romannumeral2) Let $p\in\Gamma$ be any odd prime factor of $d_n$. If $p\mid \ell$ and $\rank_p W=n-1$ then there exists a Gaussian integral vector $z_0\not\equiv 0\pmod{p}$, $z_0\not\equiv 0\pmod{\overline{p}}$ and a Gaussian integer $\lambda_0$ such that
\begin{equation}
z_0^*A^kz_0\equiv 0\pmod{N(\lcm(p,\overline{p}))},~\text{ for any}~ k\ge 0,
\end{equation}
\begin{equation}
 W^*z_0\equiv 0\pmod{\lcm(p,\overline{p})},
 \end{equation}
 and
 \begin{equation}\label{Az0}
Az_0\equiv \lambda_0 z_0\pmod{\lcm (p,\overline{p})}.
\end{equation}
In particular, $z_0^*z_0\equiv 0\pmod{N(\lcm(p,\overline{p}))}$ and $e^*z_0\equiv 0\pmod{\lcm(p,\overline{p})}.$
\end{lemma}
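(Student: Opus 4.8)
The plan is to read $U$ off from the walk matrices and apply the Smith normal form. If $U\in\mathscr U_G(H)$, then $U^{*}A(G)U=A(H)$ together with $Ue=e$ gives $U^{*}A(G)^{k}e=A(H)^{k}e$ for all $k$, i.e.\ $U^{*}W=W(H)$, whence $U=(W^{*})^{-1}W(H)^{*}$. By Lemma~\ref{sameSNF} the matrix $W^{*}$ has the same Smith normal form as $W$; writing $W^{*}=P_1\diag(d_1,\dots,d_n)P_2$ with $P_1,P_2$ unimodular, we obtain $U=P_2^{-1}\diag(d_1^{-1},\dots,d_n^{-1})N$, where $N:=P_1^{-1}W(H)^{*}$ is Gaussian integral. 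Since left (or right) multiplication by a unimodular Gaussian integral matrix does not change the level, $\ell(U)=\ell\bigl(\diag(d_i^{-1})N\bigr)$; and $d_n\diag(d_i^{-1})N=\diag(d_n/d_i)N$ is Gaussian integral because $d_i\mid d_n$, so $\ell(U)\mid d_n$.

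\textbf{Part (ii), set-up.} Put $\pi:=\lcm(p,\overline p)$. Since $p$ is odd, $\pi$ is an odd rational prime ($\pi=p$ if $p\equiv 3\pmod 4$, and $\pi=p\overline p$ if $p$ lies over a rational prime $\equiv 1\pmod 4$), so $N(\pi)=\pi^{2}$. First I would show $\pi\mid\ell$: by Theorem~\ref{realell}, $\ell=a$ or $\ell=a(1+i)$ with $a$ a positive integer, and since the odd prime $p$ cannot divide $1+i$ we get $p\mid a$, hence also $\overline p\mid a$, and therefore $\pi\mid a\mid\ell$; consequently $\pi^{2}\mid\ell\,\overline\ell=N(\ell)$. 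Now set $Y:=\ell U$, a Gaussian integral matrix which, by minimality of $\ell$, satisfies $Y\not\equiv 0\pmod p$ and $Y\not\equiv 0\pmod{\overline p}$. From $Ue=e$, $AU=UA(H)$ and $U^{*}U=I$ one obtains
\begin{equation}\label{eq:planids}
W^{*}Y=\ell\,W(H)^{*},\qquad AY=YA(H),\qquad Y^{*}A^{k}Y=N(\ell)\,A(H)^{k}\quad(k\ge 0).
\end{equation}
The first identity puts every column of $Y$ in $\ker_p W^{*}$; since $\rank_p W^{*}=\rank_{\overline p}W=n-1$ (the rank hypothesis is moved from $p$ to $\overline p$ using Lemma~\ref{sameSNF} via $d_i\sim\overline{d_i}$), this kernel is one-dimensional over the field $\mathbb Z[i]/(p)$, so $\rank_p Y=1$, and symmetrically $\rank_{\overline p}Y=1$. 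Hence $Y\equiv\alpha\beta^{\T}\pmod p$ and $Y\equiv\gamma\delta^{\T}\pmod{\overline p}$ with $\alpha,\beta,\gamma,\delta$ nonzero. Reducing $AY=YA(H)$ modulo $p$ and comparing the column spaces of the two sides (each of rank $\le 1$) forces $A\alpha\equiv\lambda'\alpha\pmod p$ for some $\lambda'\in\mathbb Z[i]$, and likewise $A\gamma\equiv\lambda''\gamma\pmod{\overline p}$.

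\textbf{Part (ii), conclusion.} Choose, by the Chinese Remainder Theorem, a Gaussian integral vector $c$ with $\beta^{\T}c\not\equiv 0\pmod p$ and $\delta^{\T}c\not\equiv 0\pmod{\overline p}$ (possible since $\beta,\delta\ne 0$), put $z_0:=Yc$, and pick $\lambda_0\in\mathbb Z[i]$ with $\lambda_0\equiv\lambda'\pmod p$ and $\lambda_0\equiv\lambda''\pmod{\overline p}$. Then $z_0\equiv(\beta^{\T}c)\alpha\not\equiv 0\pmod p$ and $z_0\equiv(\delta^{\T}c)\gamma\not\equiv 0\pmod{\overline p}$, while $Az_0\equiv\lambda'z_0\pmod p$ and $Az_0\equiv\lambda''z_0\pmod{\overline p}$, so $Az_0\equiv\lambda_0 z_0\pmod{\pi}$. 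The remaining congruences come from \eqref{eq:planids}: $W^{*}z_0=\ell\,W(H)^{*}c\equiv 0\pmod{\pi}$ (and its first coordinate equals $e^{*}z_0$, giving $e^{*}z_0\equiv 0\pmod{\pi}$), while $z_0^{*}A^{k}z_0=c^{*}Y^{*}A^{k}Yc=N(\ell)\,c^{*}A(H)^{k}c\equiv 0\pmod{\pi^{2}}$, the case $k=0$ giving $z_0^{*}z_0\equiv 0\pmod{\pi^{2}}$. Since $N(\lcm(p,\overline p))=\pi^{2}$, this $z_0$ is exactly what the lemma asks for.

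\textbf{Where the difficulty lies.} The delicate point throughout part~(ii) is the interplay between the conjugate primes $p$ and $\overline p$: one must produce a single vector $z_0$ and a single scalar $\lambda_0$ that behave correctly simultaneously modulo $p$ and modulo $\overline p$. This is precisely where self-conversity is indispensable — Theorem~\ref{realell} forces $\pi\mid\ell$, so that all reductions take place modulo the rational prime $\pi$ (and its square), and Lemma~\ref{sameSNF} supplies both the Smith-normal-form identity used in part~(i) and the symmetry $\rank_{\overline p}W=\rank_p W=n-1$ used in part~(ii). Once these symmetric facts are available, the construction of $z_0$ is routine linear algebra over $\mathbb Z[i]/(p)$ and $\mathbb Z[i]/(\overline p)$ glued together by the Chinese Remainder Theorem.
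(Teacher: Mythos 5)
Your proof is correct and follows the same overall strategy as the paper's. Part (i) is essentially identical: you read $U=(W^{*})^{-1}W(H)^{*}$ off the intertwining relations and invoke the Smith normal form of $W^{*}$ (via Lemma~\ref{sameSNF}) to see that $d_nU$ is Gaussian integral. In part (ii) both arguments work with $Y=\ell U$, note that $Y\not\equiv 0$ modulo $p$ and $\overline p$ by minimality of $\ell$, place the columns of $Y$ in the one-dimensional kernel of $W^{*}$ modulo $p$, deduce $\rank_p Y=1$, and extract the eigenvector relation from $AY=YA(H)$. The genuine divergence is in how the conjugate prime is handled when $p$ and $\overline p$ are not associates: the paper reduces $\Lambda TU_1\equiv 0\pmod{N(p)}$ through the SNF to show that every column of $TU_1$ is a multiple of $e_n$ modulo $N(p)=p\overline p$, and then builds $z_0=U_1q$ with a B\'ezout combination realizing $\gcd(m_1,\dots,m_n)$; you instead take rank-one decompositions of $Y$ modulo $p$ and modulo $\overline p$ separately and glue the choices of $c$ and $\lambda_0$ by the Chinese Remainder Theorem. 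Both routes deliver exactly the congruences the lemma asserts (which are only modulo $\lcm(p,\overline p)$ and its norm), and yours avoids the explicit gcd computation at the cost of carrying two decompositions. One small point to tidy up: when $p$ and $\overline p$ are associates the CRT appeal degenerates because the two moduli coincide; in that case the two rank-one decompositions agree up to scalars, so a single $c$ with $\beta^{\T}c\not\equiv 0\pmod p$ and $\lambda_0=\lambda'$ already suffice — this should be said explicitly rather than invoking CRT uniformly.
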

\begin{proof}
Let $H\in \mathcal{G}_n^{sc}$ such that $\mathscr{U}_G=\{U\}$. By Theorem \ref{reduce2unitary}, we have $U=W(G)W^{-1}(H)$. As $U^*U=I$,  $U=(U^*)^{-1}=(W^*(G))^{-1}W^*(H)$. By Lemma \ref{smithdec}, there exist unimodular matrices $S$ and $T$ such that $W^*(G)=S\Lambda T$, where $\Lambda=\diag(d_1,d_2,\ldots,d_n)$ is the SNF of $W^*(G)$ (or $W(G)$ equivalently due to Lemma \ref{sameSNF}). Now we can write
$$U=T^{-1}\diag({d_1}^{-1},d_2^{-1},\ldots,d_n^{-1})S^{-1}W^*(H).$$
As $T^{-1}$, $S^{-1}$, $d_n\diag({d_1}^{-1},d_2^{-1},\ldots,d_n^{-1})$ and $W^*(H)$ are Gaussian integral, we see that $d_nU$ is Gaussian integral, and hence $\ell\mid d_n$. This proves (\romannumeral1).

Let $U_1=\ell U$. To show (\romannumeral2), we consider the following two cases:

\noindent\emph{Case 1}: $p$ and $\overline{p}$ are associates.

In this case, $\lcm(p,\overline{p})=p$. By the definition of $\ell$, $U_1$ is Gaussian integral and $U_1$ contains a column $z_0$ such that $z_0\not\equiv 0\pmod{p}$. Since $U^*A^k(G)U=A^k(H)$ for any $k\ge 0$, we have
 $U_1^* A^k(G) U_1=\overline{\ell}\ell A^k(H)\equiv 0\pmod {N(\ell)}$, which implies  $U_1^* A^k(G) U_1\equiv 0\pmod {N(p)}$ since $N(p)\mid N(\ell)$. Therefore, $z_0^*A^k(G)z_0\equiv 0\pmod{N(p)}$.  As  $W^*(G) U_1=\ell W^*(H)$, we have $W^*(G)z_0\equiv 0\pmod \ell$ and hence $W^*(G)z_0\equiv 0\pmod p$.

 By Lemma~\ref{sameSNF}, we have $\rank_{p}W^*(G)=\rank_{p}W(G)$ and hence $\rank_{p}W^*(G)=n-1$. As $W^*(G)U_1\equiv 0\pmod{p}$ and $U_1$ contains a column $z_0\not\equiv 0\pmod{p}$, we see that $\rank_{p}U_1=1$ and hence there exists a Gaussian integral row vector $\gamma$ such that $U_1\equiv z_0\gamma\pmod {p}$. Suppose that $z_0$ is the $t$-th column of $U_1$. As $A(G)U_1=U_1A(H)$, we have
 \begin{equation}\label{pvector}
 A(G)z_0\equiv z_0(\gamma A_t(H))\equiv \lambda_0 z_0\pmod{p},
\end{equation}
 where $ A_t(H)$ is the $t$-th column of $A(H)$ and $\lambda_0=\gamma A_t(H)$ is a Gaussian integer.

\noindent\emph{Case 2}: $p$ and $\overline{p}$ are not associates.

In this case, $\lcm(p,\overline{p})=p\overline{p}=N(p)$. By Theorem \ref{realell}, $\ell$ and $\overline{\ell}$ are associates. Since $p\mid \ell$ we have $\overline{p}\mid \overline{\ell}$ and hence $\overline{p}\mid \ell$. Therefore, $N(p)\mid \ell$. Note that $W^*(G)U_1=\ell W^*(H)$ and $W^*(G)=S\Lambda T$. We have $S\Lambda T U_1\equiv 0 \pmod {N(p)}$, which can be simplified to
\begin{equation}\label{TU}
\Lambda T U_1\equiv 0 \pmod {N(p)}
\end{equation}
since $S$ is unimodular.

Write $\tilde{U}=TU_1$ and $e_n=(0,\ldots,0,1)^\T$, an $n$-dimensional coordinate vector. As $\rank_p W(G)=n-1$, we must have that  $p\not\mid d_i$ for any $i\in\{1,2,\ldots,n-1\}$. Also  $\overline{p}\not\mid d_i$ for any $i\in\{1,2,\ldots,n-1\}$ as $d_i$ and $\overline{d_i}$ are associates. It follows from (\ref{TU}) that
\begin{equation}\label{UNp}
\tilde{U}\equiv(m_1e_n,m_2e_n,\ldots,m_n e_n)\pmod {N(p)}
\end{equation}
for some Gaussian integers $m_1,m_2,\ldots,m_n$. Write $u=T^{-1}e_n$. Then we have
\begin{equation}\label{U1Np}
U_1\equiv(m_1u,m_2u,\ldots,m_nu)\pmod {N(p)}.
 \end{equation}
If $p\mid m_i$ for each $i\in \{1,2,\ldots,n\}$ then $\frac{U_1}{p}$ is Gaussian integral, contradicting the minimality of $\ell$. Thus, $p\not\mid m_i$ for some $i\in \{1,2,\ldots,n\}$. Similarly, $\overline{p}\not\mid m_j$ for some $j\in \{1,2,\ldots,n\}$. Also $u\not\equiv 0\pmod p$ and $u\not\equiv 0\pmod {\overline{p}}$. Denote $c=\gcd(m_1,m_2,\ldots,m_n)$. Then $p\not\mid c$ and $\overline{p}\not\mid c$. Since the ring of Gaussian integers is Euclidian, there exist $n$ Gaussian integers $q_1,q_2,\ldots,q_n$ such that $c=q_1m_1+q_2m_2+\cdots+q_nm_n$. Write $q=(q_1,q_2,\ldots,q_n)^\T$ and let $z_0=U_1q$.

From (\ref{U1Np}), we have
\begin{equation}\label{z0Np}
z_0\equiv (m_1 u,m_2 u,\ldots,m_n u)q\equiv cu\pmod {N(p)}.
 \end{equation}
 Therefore, $z_0\not\equiv 0\pmod{p}$ and $z_0\not\equiv 0\pmod{\overline{p}}$.  Note that $z_0^*A^k(G)z_0=(U_1q)^*A^k(G)(U_1q)=q^*U_1^*A^k(G)U_1q=l\overline{l}q^*U^*A^k(G)Uq=l\overline{l}q^*A^k(H)q$. We have
$z_0^*A^k(G)z_0\equiv 0\pmod{N^2(p)}$ as $N(p)\mid \ell$. As $W^*(G)z_0=W^*(G)U_1q=\ell W^*(G)Uq=\ell W^*(H)q$,  we have $W^*(G)z_0\equiv 0\pmod{N(p)}$.

Let $\eta=(\frac{m_1}{c},\frac{m_1}{c},\ldots,\frac{m_n}{c})$. It follows from (\ref{U1Np}) and (\ref{z0Np}) that $U_1\equiv z_0 \eta\pmod{N(p)}$. Now the same argument as in Case 1 shows that $A(G)z_0\equiv \lambda_0 z_0\pmod{N(p)}$ for some Gaussian integer $\lambda_0$. This finally completes the proof.
\end{proof}

 \begin{lemma}\label{real}
 Using the notations of Lemma \ref{ellz0}, $\Im(\lambda_0)\equiv 0\pmod{\lcm(p,\overline{p})}$.
 \end{lemma}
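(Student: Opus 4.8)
The plan is to use the hypothesis that $G$ is self-converse to build, alongside $z_0$, a companion vector which is a second eigenvector of $A(G)$ modulo $p$ (and modulo $\overline{p}$) lying in the same one-dimensional solution space of $W^*x\equiv 0$, and then to play the two vectors against each other. Throughout set $m=\lcm(p,\overline{p})$; since $p$ is odd, $m$ is a positive rational integer (it is $p$ itself when $p$ and $\overline{p}$ are associates, and $N(p)$ otherwise), so $\overline{m}=m$ and reduction modulo $m$ commutes with complex conjugation. Because $G\in\mathcal{G}_n^{sc}$, fix a permutation matrix $P$ with $\overline{A(G)}=A(G^\T)=P^{-1}A(G)P$; since $P$ is a permutation we have $Pe=e$, and then exactly as in the proof of Theorem~\ref{Wrealorimag} one also gets $\overline{W(G)}=P^{-1}W(G)$, equivalently $W^*(G)=\overline{W(G)}^\T=W(G)^\T P$. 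Put $v_0:=P\overline{z_0}$.

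The first step is to record three facts about $v_0$. (a) Conjugating the congruence $A(G)z_0\equiv\lambda_0z_0\pmod m$ of Lemma~\ref{ellz0}, using $\overline{m}=m$ and then $\overline{A(G)}=P^{-1}A(G)P$, and finally multiplying by $P$ on the left, yields $A(G)v_0\equiv\overline{\lambda_0}v_0\pmod m$. (b) Conjugating $W^*(G)z_0\equiv 0\pmod m$ and rewriting the result with $\overline{W(G)}=P^{-1}W(G)$ gives $W^*(G)v_0\equiv 0\pmod m$; concretely, $W^*(G)z_0\equiv 0$ reads $W(G)^\T Pz_0\equiv 0$, whose complex conjugate is $W(G)^\T P^2\overline{z_0}\equiv 0$, and the left side of the latter is precisely $W^*(G)v_0$. (c) Since $P$ is a permutation and $p\mid z\Leftrightarrow\overline{p}\mid\overline{z}$, the facts $z_0\not\equiv 0\pmod p$ and $z_0\not\equiv 0\pmod{\overline{p}}$ supplied by Lemma~\ref{ellz0} give $v_0\not\equiv 0\pmod p$ and $v_0\not\equiv 0\pmod{\overline{p}}$.

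Finally, fix a prime $q\in\{p,\overline{p}\}$ and work over the field $\mathbb{Z}[i]/(q)$. By Lemma~\ref{sameSNF}, $W(G)$ and $W^*(G)$ have the same elementary divisors $d_1,\dots,d_n$; moreover $q\mid d_i\Leftrightarrow p\mid d_i$ because $d_i$ and $\overline{d_i}$ are associates. Hence the hypothesis $\rank_pW(G)=n-1$ of Lemma~\ref{ellz0} gives $\rank_qW^*(G)=n-1$, so the space of solutions of $W^*(G)x\equiv 0\pmod q$ is one-dimensional over $\mathbb{Z}[i]/(q)$. By (b) and (c), both $z_0$ and $v_0$ reduce to nonzero vectors of this space, so $v_0\equiv c\,z_0\pmod q$ for some nonzero scalar $c$. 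Applying $A(G)$ and using (a) together with the eigen-relation for $z_0$, one obtains $c\,\overline{\lambda_0}\,z_0\equiv A(G)v_0\equiv c\,A(G)z_0\equiv c\,\lambda_0\,z_0\pmod q$. Cancelling the nonzero $c$ and $z_0$ over the field gives $\overline{\lambda_0}\equiv\lambda_0\pmod q$, i.e. $2i\,\Im(\lambda_0)\equiv 0\pmod q$, and since $q$ is odd, $2i$ is a unit modulo $q$, so $\Im(\lambda_0)\equiv 0\pmod q$. Taking $q=p$ and $q=\overline{p}$ yields $\Im(\lambda_0)\equiv 0\pmod{\lcm(p,\overline{p})}$, as required.

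The only real idea here is the choice of the companion vector $v_0=P\overline{z_0}$; everything else is routine bookkeeping with conjugates and with the permutation $P$. The step I expect to need the most care is fact (b) --- tracking the permutation so that the conjugate of $W^*(G)z_0\equiv 0$ becomes a statement about $W^*(G)$ applied to $P\overline{z_0}$, and not to $\overline{z_0}$ or $P^{-1}\overline{z_0}$ --- together with the small point that the one-dimensionality of the solution space of $W^*(G)x\equiv 0$ must be available both modulo $p$ and modulo $\overline{p}$, which is exactly why Lemma~\ref{sameSNF} and the relation $p\mid d_i\Leftrightarrow\overline{p}\mid d_i$ are invoked.
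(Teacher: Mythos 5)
Your proof is correct and follows essentially the same route as the paper: both hinge on the companion vector $P\overline{z_0}$ and on the fact that $\rank_p W=n-1$ forces the solution space of $W^*x\equiv 0$ modulo $p$ (and modulo $\overline{p}$) to be one-dimensional. The paper phrases this contrapositively ($\lambda_0\not\equiv\overline{\lambda_0}$ would yield two independent kernel vectors, contradicting the rank), whereas you argue directly that the two eigenvectors must be proportional and hence share their eigenvalue; this, and your cleaner derivation of $W^*v_0\equiv 0$ by conjugating the whole relation rather than iterating the eigen-equation, are only presentational differences.
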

 \begin{proof}
 We first show $\Im(\lambda_0)\equiv 0\pmod{p}$. From  (\ref{Az0}), we have
 \begin{equation}\label{Az0pp}
 Az_0\equiv \lambda_0z_0\pmod{p}\quad \text{and}~Az_0\equiv \lambda_0z_0\pmod{\overline{p}}.
 \end{equation}
Taking conjugation on both side of the second congruence equation in (\ref{Az0pp}), we have
 \begin{equation}\label{conpvector}
 \overline{A}\overline{z_0}\equiv \overline{\lambda_0} \overline{z_0}\pmod{p}.
\end{equation}
Since $G$ is self-converse, there exists a permutation matrix $P$ such that $\overline{A}=P^{-1}AP$. Thus, by (\ref{conpvector}),
 $P^{-1}AP\overline{z_0}\equiv \overline{\lambda_0} \overline{z_0}\pmod{p}$ and hence $AP\overline{z_0}\equiv \overline{\lambda_0} P\overline{z_0}\pmod{p}$. Write $z_1=P\overline{z_0}$. Suppose to the contrary that $\Im (\lambda_0)\not\equiv 0\pmod{p}$. Then $\lambda_0\not\equiv \overline{\lambda_0}\pmod{p}$ as $p$ is odd. As $z_0$ and $z_1$ are eigenvectors corresponding to different eigenvalues, they are linearly independent over $\mathbb{Z}[i]/(p)$. Moreover, $e^*z_1=e^*P\overline{z_0}=e^*\overline{z_0}=\overline{e^*z_0}\equiv 0\pmod{p}$ and hence $e^*A^kz_1\equiv\left(\overline{\lambda_0}\right)^k e^*z_1\equiv 0\pmod{p}$. Therefore, $W^*z_1\equiv 0\pmod{p}$. As  $W^*z_0\equiv 0\pmod{p}$ and $z_0,z_1$ are linearly independent, we have  $\rank_{p}W^*\le n-2$, i.e., $\rank_{p}W\le n-2$ by Lemma \ref{sameSNF}.  This contradicts our assumption that $p^2\not\mid\det W$.

 Note that the above argument also holds if we interchange $p$ and $\overline{p}$. Thus we also have  $\Im(\lambda_0)\equiv 0\pmod{\overline{p}}$ and hence $\Im(\lambda_0)\equiv 0\pmod{\lcm(p,\overline{p})}$.
 \end{proof}
 \begin{lemma}\label{yz}
 If $(A-\lambda_0 I)y\equiv sp^jz_0\pmod {p^{j+1}}$ for some $s\in \mathbb{Z}[i]$ and $j\in \{0\}\cup \mathbb{Z}^+$ then
 \begin{equation}W^*y\equiv e^*y(1,\lambda_0,\ldots,\lambda_0^{n-1})^\T\pmod{p^{j+1}}.
 \end{equation}
 \end{lemma}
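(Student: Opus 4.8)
The plan is to unwind the definition of $W^{*}y$ and to propagate the near‑eigenvector relation on $y$ through successive powers of $A$. Since $A$ is Hermitian, the $k$‑th entry of $W^{*}y$ is $(A^{k}e)^{*}y=e^{*}A^{k}y$ for $k=0,1,\ldots,n-1$, so it suffices to prove the scalar congruences $e^{*}A^{k}y\equiv\lambda_{0}^{k}\,e^{*}y\pmod{p^{j+1}}$ for all $k\ge 0$, and then to read them off for $k=0,\ldots,n-1$.

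First I would establish, by induction on $k$, the vector congruence
\[
A^{k}y\equiv \lambda_{0}^{k}y+k\lambda_{0}^{k-1}sp^{j}z_{0}\pmod{p^{j+1}}.
\]
The cases $k=0,1$ are immediate from the hypothesis $Ay\equiv\lambda_{0}y+sp^{j}z_{0}\pmod{p^{j+1}}$. For the inductive step I would apply $A$ to the inductive hypothesis and use that, by (\ref{Az0}) in Lemma \ref{ellz0}, $Az_{0}\equiv\lambda_{0}z_{0}\pmod{p}$ (because $p\mid\lcm(p,\overline{p})$); multiplying this relation by the scalar $sp^{j}$ upgrades it to $sp^{j}Az_{0}\equiv sp^{j}\lambda_{0}z_{0}\pmod{p^{j+1}}$, which is exactly what is needed to combine the two contributions into the coefficient $k+1$.

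Finally I would left‑multiply the displayed congruence by $e^{*}$ and invoke the other consequence of Lemma \ref{ellz0}, namely $e^{*}z_{0}\equiv 0\pmod{\lcm(p,\overline{p})}$, hence $e^{*}z_{0}\equiv 0\pmod{p}$; multiplying by $sp^{j}$ once more kills the correction term modulo $p^{j+1}$, leaving $e^{*}A^{k}y\equiv\lambda_{0}^{k}\,e^{*}y\pmod{p^{j+1}}$. Assembling these for $k=0,\ldots,n-1$ yields $W^{*}y\equiv e^{*}y\,(1,\lambda_{0},\ldots,\lambda_{0}^{n-1})^{\T}\pmod{p^{j+1}}$, as claimed.

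The argument is essentially a routine induction; the only point that requires a little care is the ``upgrade'' mechanism whereby a congruence modulo $p$ becomes a congruence modulo $p^{j+1}$ once it is multiplied by $p^{j}$, together with checking that the two facts borrowed from Lemma \ref{ellz0} ($Az_{0}\equiv\lambda_{0}z_{0}$ and $e^{*}z_{0}\equiv 0$) are each used only modulo $p$, which is all that is needed here. I do not anticipate any deeper obstacle.
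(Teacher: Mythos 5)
Your proof is correct and follows essentially the same inductive scheme as the paper: propagate the near-eigenvector relation through powers of $A$ and kill the correction term because it carries a factor $p^{j}$ times something divisible by $p$. The only cosmetic difference is that the paper inducts directly on the scalars $e^{*}A^{k}y$ and quotes $W^{*}z_{0}\equiv 0\pmod{p}$ (i.e.\ $e^{*}A^{k}z_{0}\equiv 0\pmod{p}$ for every $k$), whereas you first establish the vector congruence $A^{k}y\equiv\lambda_{0}^{k}y+k\lambda_{0}^{k-1}sp^{j}z_{0}\pmod{p^{j+1}}$ using $Az_{0}\equiv\lambda_{0}z_{0}\pmod{p}$ and only then project with $e^{*}$; both sets of facts are supplied by Lemma~\ref{ellz0}, so either route is fine.
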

 \begin{proof} We claim that $e^*A^ky\equiv\lambda_0^ke^*y\pmod{p^{j+1}}$ for  $k= 0,1,\ldots,n-1$. The case $k=0$ is trivial. Let $k<n-1$. Suppose that the claim holds for $k$ and we are going to check it for $k+1$. By Lemma \ref{ellz0}, $W^*z_0\equiv 0\pmod p$, and hence $e^*A^kz_0\equiv 0\pmod{p}$ as $e^*A^{k}$ is the $(k+1)$-th row of $W^*$. Thus,  $e^*A^{k}sp^jz_0\equiv 0\pmod{p^{j+1}}$. Now, by the condition of this lemma and induction hypothesis,
 $$e^*A^{k+1}y\equiv e^*A^{k}(sp^jz_0+\lambda_0y)\equiv \lambda_0e^*A^{k}y\equiv \lambda_0^{k+1}e^*y\pmod{p^{j+1}}.$$
 This proves the claim and the lemma follows.
 \end{proof}
 \begin{lemma}\label{allsolution}
  Using the notations of Lemma \ref{ellz0},
   \begin{equation}\label{rankHandz0}
  z_0^*(A-\lambda_0I,z_0)\equiv 0\pmod{p} \quad\textup{and}\quad\rank_p  (A-\lambda_0I,z_0)= n-1.
 \end{equation}
 \end{lemma}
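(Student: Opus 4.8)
The plan is to dispatch the two assertions separately. The congruence $z_0^*(A-\lambda_0 I,z_0)\equiv 0\pmod p$ is a short computation: write $\pi=\lcm(p,\overline p)$, a positive rational integer divisible by $p$ and coprime to $2$. Lemma~\ref{ellz0} already gives $z_0^*z_0\equiv 0\pmod{N(\pi)}$, hence $z_0^*z_0\equiv 0\pmod p$. For the block $z_0^*(A-\lambda_0 I)$ I would start from $Az_0\equiv\lambda_0 z_0\pmod{\pi}$ (equation \eqref{Az0}), take conjugate transposes, and use $A^*=A$ together with Lemma~\ref{real} (so that $\overline{\lambda_0}\equiv\lambda_0\pmod\pi$, using that $\pi$ is odd) to obtain $z_0^*A\equiv\lambda_0 z_0^*\pmod\pi$, whence $z_0^*(A-\lambda_0 I)\equiv 0\pmod p$.

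For the rank equality, the inequality $\rank_p(A-\lambda_0 I,z_0)\le n-1$ is immediate from the congruence just proved: $z_0^*$ is a nonzero row over $\mathbb{Z}[i]/(p)$ (nonzero because $z_0\not\equiv 0\pmod{\overline p}$, by Lemma~\ref{ellz0}) annihilating the $n$-rowed matrix $(A-\lambda_0 I,z_0)$. For the reverse inequality I would argue by contradiction, first recording two facts. (a) $\rank_p W=n-1$ (this is carried over from the hypotheses of Lemma~\ref{ellz0}; alternatively, $p\mid\ell$ gives $p\mid d_n$ by Lemma~\ref{ellz0}(\romannumeral1), so $p\mid\det W$, and a $p$-rank drop of $2$ would force $p\mid d_{n-1},d_n$, hence $\overline p\mid d_{n-1},d_n$ by Lemma~\ref{sameSNF}, whence $p^2\mid\det W$). (b) Consequently, by Lemma~\ref{sameSNF}, $\ker_p W^*$ is one-dimensional, and since $W^*z_0\equiv 0\pmod p$ with $z_0\not\equiv 0\pmod p$ it equals $\langle z_0\rangle$.

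Now suppose $\rank_p(A-\lambda_0 I,z_0)\le n-2$. Then the left null space of $(A-\lambda_0 I,z_0)$ over $\mathbb{Z}[i]/(p)$ has dimension $\ge 2$; it lies in the left $\lambda_0$-eigenspace $L=\ker_p((A-\lambda_0 I)^{\T})$, so $\dim_p L\ge 2$, i.e.\ $\dim_p E\ge 2$ for the right eigenspace $E=\ker_p(A-\lambda_0 I)$. On the other hand, for $v\in E$ one has $A^kv\equiv\lambda_0^k v\pmod p$, hence $W^*v\equiv(e^*v)(1,\lambda_0,\dots,\lambda_0^{\,n-1})^{\T}\pmod p$; so $e^*v\equiv 0\pmod p$ implies $v\in\ker_p W^*=\langle z_0\rangle$. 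Since $z_0\in E$ and $e^*z_0\equiv 0\pmod p$ (Lemma~\ref{ellz0}), the functional $v\mapsto e^*v$ on $E$ has kernel exactly $\langle z_0\rangle$, so $\dim_p E\le 2$; thus $\dim_p E=2$ and I may fix $v_0\in E$ with $e^*v_0\not\equiv 0\pmod p$. Moreover the left null space of $(A-\lambda_0 I,z_0)$, being $2$-dimensional and contained in the $2$-dimensional $L$, equals $L$; hence $z_0$ is orthogonal to $\ker_p((A-\lambda_0 I)^{\T})$, which means $z_0$ lies in the column space of $A-\lambda_0 I$ modulo $p$.

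Finally I would pick a Gaussian integral $x_0$ with $(A-\lambda_0 I)x_0\equiv z_0\pmod p$ and normalize it to $x_0'=x_0-(e^*x_0)(e^*v_0)^{-1}v_0$, so that $(A-\lambda_0 I)x_0'\equiv z_0\pmod p$ (since $v_0\in E$) while now $e^*x_0'\equiv 0\pmod p$. An easy induction gives $A^kx_0'\equiv\lambda_0^k x_0'+k\lambda_0^{k-1}z_0\pmod p$, whence $e^*A^kx_0'\equiv 0\pmod p$ for every $k$ (using $e^*x_0'\equiv 0$ and $e^*z_0\equiv 0$), i.e.\ $W^*x_0'\equiv 0\pmod p$; so $x_0'\in\ker_p W^*=\langle z_0\rangle$, say $x_0'\equiv\delta z_0\pmod p$, and then $z_0\equiv(A-\lambda_0 I)x_0'\equiv\delta(A-\lambda_0 I)z_0\equiv 0\pmod p$, contradicting $z_0\not\equiv 0\pmod p$. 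This forces $\rank_p(A-\lambda_0 I,z_0)\ge n-1$, hence equality. I expect the main obstacle to be exactly this lower bound: extracting $\dim_p E=2$, converting the hypothetical surplus of left null vectors into the membership $z_0\in$ (column space of $A-\lambda_0 I$ mod $p$), and then normalizing the preimage $x_0$ so that the walk matrix $W^*$ "sees" it — all while keeping the $\bmod\,p$ versus $\bmod\,\overline p$ (conjugate) divisibilities straight, which is where self-converseness enters, through Lemmas~\ref{sameSNF} and \ref{real}.
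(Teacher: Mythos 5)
Your proof is correct and follows essentially the same route as the paper: the congruence is obtained by conjugate-transposing $(A-\lambda_0 I)z_0\equiv 0$ and invoking Lemma \ref{real}, the bound $\dim_p\ker(A-\lambda_0 I)\le 2$ comes from restricting the functional $e^*$ to the eigenspace and using $\ker_p W^*=\langle z_0\rangle$, and the lower bound on the rank is derived by contradiction from the membership of $z_0$ in the column space of $A-\lambda_0 I$ over $\mathbb{Z}[i]/(p)$ together with Lemma \ref{yz}. The only noticeable variation is the endgame: the paper manufactures two linearly independent solutions of $W^*z\equiv 0\pmod{p}$, whereas you normalize a single preimage $x_0'$ of $z_0$, force $x_0'\in\ker_p W^*=\langle z_0\rangle$, and conclude $z_0\equiv(A-\lambda_0 I)x_0'\equiv 0$ --- both contradictions rest on exactly the same facts.
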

\begin{proof}
 We claim that
\begin{equation}\label{rankn2}
\rank_p (A-\lambda_0 I)\ge n-2.
 \end{equation}
Suppose to the contrary that $\rank_p(A-\lambda_0 I)\le n-3.$ Consider the equation $(A-\lambda_0 I)z\equiv 0\pmod {p}$ which has a nontrivial solution $z_0$.  There are at least two nontrivial solution $y_1$ and $y_2$ such that $z_0,y_1,y_2$ are linear independent over $\mathbb{Z}[i]/(p).$ If either of $y_1$ and $y_2$, say $y_1$, satisfies  $e^*y_1\equiv 0\pmod {p}$, then it follows from Lemma \ref{yz} for $s=0$ that $z_0$ and $y_1$ are two linear independent solutions of $W^*z\equiv 0\pmod{p}$, which contradicts the fact that $\rank_p W^*=n-1$. Thus,  $e^*y_1\not\equiv 0\pmod {p}$ and $e^*y_2\not\equiv 0\pmod{p}$. Let $y_3=(e^*y_1)y_2-(e^*y_2)y_1$. As $y_1$ and $y_2$ are linear independent, $y_3\not\equiv 0\pmod{p}$. Note that $e^*y_3=0$. Thus,  $W^*y_3\equiv 0\pmod{p}$. Therefore, $W^*z\equiv 0\pmod{p}$ has  solutions $z$ and $y_3$, which are clearly independent over $\mathbb{Z}[i]/(p).$ This contradiction completes the proof of (\ref{rankn2}).

Next we show  (\ref{rankHandz0}). By Lemma \ref{ellz0}, $(A-\lambda_0 I)z_0\equiv 0\pmod {\overline{p}}$. Taking conjugate transpose and noting that $A^*=A$ and $\Im(\lambda_0)\equiv 0\pmod {p}$, we have
  $z_0^*(A-\lambda_0 I)\equiv 0\pmod {p}$. Combining with the fact that $z_0^*z_0\equiv 0 \pmod {p}$, we obtain $z_0^*(A-\lambda_0I,z_0)\equiv 0\pmod{p} $. As $z_0^*\not\equiv 0\pmod{p}$, we have
 \begin{equation}\label{lessthan}
  \rank_p (A-\lambda_0I,z_0) \le n-1.
  \end{equation}

 Suppose to the contrary that the equality in (\ref{lessthan}) does not hold. Then, by (\ref{rankn2}),
 $\rank_p  (A-\lambda_0I)=n-2$ and $z_0$ can be written as the linear combination of the columns of $A-\lambda_0I$, say
 $z_0\equiv (A-\lambda_0I)z_1\pmod{p}$.

 As $\rank_p  (A-\lambda_0I)=n-2$, $(A-\lambda_0I)y\equiv 0\pmod{p}$ has two  solutions $z_2$ and $z_3$ which are independent over $\mathbb{Z}[i]/(p)$. Since $(A-\lambda_0I)z_1\equiv z_0\not \equiv 0\pmod{p}$, $z_1$ can not be written as a linear combination of $z_2$ and $z_3$. This implies that $z_1,z_2,z_3$ are linearly independent. Consider the equation $e^*(k_1z_1+k_2z_2+k_3z_3)\equiv 0 \pmod{p}$ with three unknowns $k_1,k_2,k_3$. Clearly, it has at least two independent solutions over $\mathbb{Z}[i]/(p)$. Let $(a_1,a_2,a_3)^\T$ and  $(b_1,b_2,b_3)^\T$  be such two solutions and write $\alpha=a_1z_1+a_2z_2+a_3z_3$ and $\beta=b_1z_1+b_2z_2+b_3z_3$. It is easy to see that $\alpha$ and $\beta$ are linearly independent over $\mathbb{Z}[i]/(p)$. Note that $(A-\lambda_0I)\alpha\equiv a_1z_0$ and $e^*\alpha\equiv 0\pmod{p}$. It follows from Lemma \ref{yz} that $W^*\alpha\equiv 0\pmod{p}$. Similarly,  $W^*\beta\equiv 0\pmod{p}$. Thus, we have found two independent solutions of $W^*z\equiv 0\pmod{p}$. This contradicts the fact that $\rank_p W^*=n-1$ and hence completes the proof of this lemma.
 \end{proof}
 \begin{lemma}\label{solutionmodp2}
 Using the notations of Lemma \ref{ellz0}, $W^*z\equiv 0\pmod{p^2}$ has a solution $z\not\equiv 0\pmod{p}.$
 \end{lemma}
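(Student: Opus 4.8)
The plan is to reduce the whole statement, via Lemma~\ref{yz}, to the construction of one vector. Put $B_0=A-\lambda_0 I$ and $v_0=(1,\lambda_0,\ldots,\lambda_0^{n-1})^{\T}$. It suffices to produce a Gaussian integral vector $y$ with $y\not\equiv 0\pmod p$, with $B_0y\equiv s\,p\,z_0\pmod{p^2}$ for some $s\in\mathbb Z[i]$, and with $e^*y\equiv 0\pmod{p^2}$: Lemma~\ref{yz} with $j=1$ then gives $W^*y\equiv (e^*y)\,v_0\equiv 0\pmod{p^2}$, so $y$ is the required solution. Throughout I would use freely the conclusions of Lemmas~\ref{ellz0} and~\ref{real}, namely $Az_0\equiv\lambda_0z_0\pmod p$, $W^*z_0\equiv 0\pmod p$, $z_0^*z_0\equiv 0\pmod{p^2}$, $z_0^*A^kz_0\equiv 0\pmod{p^2}$ for all $k$, $e^*z_0\equiv 0\pmod p$ and $\Im\lambda_0\equiv 0\pmod p$, together with Lemma~\ref{allsolution} ($\rank_p(B_0,z_0)=n-1$ and $z_0^*(B_0,z_0)\equiv 0\pmod p$) and the standing hypothesis $\rank_pW=n-1$; when $p$ and $\overline p$ are non-associate one reads these with $\lcm(p,\overline p)=p\overline p$ in place of $p$ where appropriate, but only the displayed mod-$p$ and mod-$p^2$ consequences are used.

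I would build $y$ as a correction of $z_0$. Since $Az_0\equiv\lambda_0z_0\pmod p$, write $B_0z_0=p\,w$ with $w$ Gaussian integral; then $z_0^*B_0z_0=z_0^*Az_0-\lambda_0z_0^*z_0\equiv 0\pmod{p^2}$, which forces $z_0^*w\equiv 0\pmod p$. Because $\Im\lambda_0\equiv 0\pmod p$, the matrix $B_0$ is Hermitian over $\mathbb Z[i]/(p)$, so in the principal case $\rank_pB_0=n-1$ its kernel mod $p$ is the line through $z_0$ and its column space mod $p$ equals $\{v:\ z_0^*v\equiv 0\pmod p\}$; hence both $w$ (just shown) and $z_0$ (since $z_0^*z_0\equiv 0\pmod p$) lie in that column space. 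In particular there is $z_1$ with $B_0z_1\equiv z_0\pmod p$, and necessarily $e^*z_1\not\equiv 0\pmod p$: otherwise Lemma~\ref{yz} with $j=0$ would put $z_1$ into $\ker_pW^*$, which is the line through $z_0$, contradicting $B_0z_1\equiv z_0\not\equiv 0\pmod p$. Now set $y=z_0+p\,u$. Choosing $u$ with $w+B_0u\equiv s\,z_0\pmod p$ for some $s$ (possible, as $w$ and $z_0$ lie in the column space of $B_0$ mod $p$) gives $B_0y=p(w+B_0u)\equiv s\,p\,z_0\pmod{p^2}$; and replacing $u$ by $u+t\,z_1$ preserves this form (merely changing $s$ to $s+t$) while adding $t\,e^*z_1$ to $e^*u$, so, $e^*z_1$ being a unit mod $p$, we may also arrange $e^*u\equiv -\,e^*z_0/p\pmod p$, i.e.\ $e^*y=e^*z_0+p\,e^*u\equiv 0\pmod{p^2}$. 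Since $y\equiv z_0\not\equiv 0\pmod p$, Lemma~\ref{yz} completes this case.

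I expect the genuine obstacle to be the degenerate alternative $\rank_p(A-\lambda_0 I)=n-2$, which the earlier lemmas do not exclude: there $z_0$ need no longer lie in the column space of $B_0$ mod $p$, so the correction step must be reorganised — for instance by bringing in the second near-kernel vector $P\overline{z_0}$ (and a vector of $\ker_p B_0$ on which $e^*$ does not vanish) furnished by self-conversity, or by a direct Smith normal form estimate on $A-\lambda_0 I$ via Lemma~\ref{psolutionanddn}. Two minor points I would dispatch first: the case $p^2\mid\ell$ is immediate, since then $z=U_1q$ works for any $q$ with $U_1q\not\equiv 0\pmod p$ because $W^*z=\ell\,W^*(H)q\equiv 0\pmod{p^2}$; and, more generally, when the generator of $\ker_pW^*(H)$ is not annihilated by the row vector $\gamma$ with $U_1\equiv z_0\gamma\pmod p$, the same choice $z=U_1q$ already settles the lemma (using $v_p(\det W(H))=v_p(\det W(G))$), so one may assume from the outset that $p\|\ell$ and that this generator lies in $\ker_p U_1$.
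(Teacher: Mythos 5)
Your main construction is correct and is essentially the paper's argument in a different packaging: the paper forms $z_0-py-gpz_1$ where $\frac{(A-\lambda_0I)z_0}{p}\equiv(A-\lambda_0I)y+sz_0\pmod p$, which is exactly your $y=z_0+pu$ with the $tz_1$ adjustment folded in at the end rather than into $u$. But you have correctly identified, and left open, a genuine gap: your argument uses twice that $z_0$ lies in the column space of $B_0=A-\lambda_0I$ modulo $p$ (once to solve $w+B_0u\equiv sz_0$, once to produce $z_1$ with $B_0z_1\equiv z_0$), and this fails precisely in the degenerate case $\rank_p B_0=n-2$, where by Lemma~\ref{allsolution} $z_0$ is \emph{not} in $\mathrm{col}_p(B_0)$ since $\rank_p(B_0,z_0)=n-1>\rank_p B_0$. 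Lemmas~\ref{ellz0}--\ref{allsolution} do not exclude this case, so as written your proof is incomplete.

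The degenerate case is not, however, the deep obstacle you fear; the paper closes it with the same two-step correction, and no Smith-normal-form detour or second near-kernel vector $P\overline{z_0}$ is needed. For the first step, what you actually need is only that $w=\frac{B_0z_0}{p}$ lies in the column space of the \emph{augmented} matrix $(B_0,z_0)$ modulo $p$, and that holds in both cases: by Lemma~\ref{allsolution} this column space has dimension $n-1$ and is contained in, hence equals, the hyperplane $\{v:z_0^*v\equiv 0\pmod p\}$, which contains $w$ because $z_0^*B_0z_0\equiv 0\pmod{p^2}$. So $w\equiv -B_0u+sz_0\pmod p$ is solvable regardless of $\rank_pB_0$. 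For the second step, the adjustment vector $z_1$ need only satisfy $B_0z_1\equiv s'z_0\pmod p$ for \emph{some} $s'\in\{0,1\}$ together with $e^*z_1\not\equiv 0\pmod p$, since Lemma~\ref{yz} with $j=0$ applies equally to $s'=0$. In the degenerate case one takes $z_1$ to be a kernel vector of $B_0$ mod $p$ independent of $z_0$ (the kernel is $2$-dimensional); then $u\mapsto u+tz_1$ leaves $w+B_0u$ unchanged while shifting $e^*u$ by $te^*z_1$, which is all you use. That $e^*z_1\not\equiv 0\pmod p$ follows exactly as in your principal case: otherwise Lemma~\ref{yz} would give $W^*z_1\equiv 0\pmod p$, and $z_0,z_1$ would be two independent vectors in $\ker_pW^*$, contradicting $\rank_pW=n-1$. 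Inserting this case distinction (which is the Claim in the paper's proof) completes your argument.
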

 \begin{proof}
Note that $p^2\mid N(\lcm(p,\overline{p}))$ always hold. By Lemma \ref{ellz0} , we have
 $z_0^*Az_0\equiv 0\pmod{p^2}$ and $z_0^*z_0\equiv 0\pmod{p^2}$. Consequently,
 \begin{equation}\label{conp2}
 z_0^*(A-\lambda_0 I)z_0\equiv 0\pmod{p^2}.
 \end{equation}
 Note that $(A-\lambda_0 I)z_0\equiv 0\pmod{p}$ by Lemma \ref{ellz0}. We can rewrite (\ref{conp2}) as
 \begin{equation}\label{conp1}
 z_0^*\frac{(A-\lambda_0 I)z_0}{p}\equiv 0\pmod{p}.
 \end{equation}
  Note that $z_0^*\not\equiv 0\pmod{p}$. It follows from Lemma \ref{allsolution} that $\frac{(A-\lambda_0 I)z_0}{p}$ can be expressed as a linear combination of the columns of $(A-\lambda_0 I,z_0)$ over the field $\mathbb{Z}[i]/(p)$. Write
 \begin{equation}
 \frac{(A-\lambda_0 I)z_0}{p}\equiv (A-\lambda_0 I)y+sz_0\pmod{p}.
 \end{equation}
 Multiplying each side by $p$, we have $(A-\lambda_0 I)z_0\equiv (A-\lambda_0 I)py+spz_0\pmod{p^2}$, i.e.,
 \begin{equation}
 (A-\lambda_0 I)(z_0-py)\equiv spz_0\pmod{p^2}.
 \end{equation}
 By Lemma \ref{yz}, we have
 \begin{equation}\label{Wp1}
 W^*(z_0-py)\equiv e^*(z_0-py)(1,\lambda_0,\lambda_0^2,\ldots,\lambda_{0}^{n-1})^\T\pmod{p^2}.
 \end{equation}

\noindent\textbf{Claim}: There exists a vector $z_1$ such that  $e^*z_1\not\equiv 0\pmod {p}$ and $(A-\lambda_0I)z_1\equiv sz_0\pmod p$ for some $s\in\{0,1\}$.

By Lemma \ref{allsolution}, $\rank_p (A-\lambda_0I)=n-2$ or $n-1$. We prove the claim by considering the following two cases:\\
\emph{Case}1: $\rank_p (A-\lambda_0I)=n-2$.

By the condition of this case, the subspace of the eigenvectors of $A$ corresponding to $\lambda_0$ is 2-dimensional. Thus, there exists a vector $z_1$ with $(A-\lambda_0 I)z_1\equiv 0\pmod{p}$ that is linearly independent with $z_0$. If $e^*z_1\equiv 0\pmod{p}$ then $W^*z\equiv 0\pmod{p}$ by Lemma \ref{yz}. Note that $W^*z_0\equiv 0\pmod p$. Thus, $\rank_p W^*\le n-2$, a contradiction. \\
\emph{Case 2}: $\rank_p (A-\lambda_0I)=n-1$.

Since $\rank_p( A-\lambda_0I,z_0)=n-1$, the condition of this case implies that $z_0$ can be expressed as a linear combination of the columns of $A-\lambda_0 I$, i.e., there exists a vector $z_1$ such that $(A-\lambda_0I)z_1\equiv z_0\pmod{p}$. Since $z_0\not\equiv 0\pmod{p}$, $z_1\not\equiv 0\pmod{p}$ and $z_1$  is not a eigenvector of $A$ corresponding to $\lambda$ (over $\mathbb{Z}[i]$).  Thus, $z_1$ and $z_0$ are linearly independent. Finally, we also have  $e^*z_1\not\equiv 0\pmod{p}$ by the same argument as in Case 1. This proves the claim.

Note that $ e^*(z_0-py)\equiv e^*z_0\equiv0\pmod{p}$. By the claim, there exists a Gaussian integer $g$ such that $\frac{e^*(z_0-py)}{p}\equiv ge^*z_1\pmod{p}$, i.e,
\begin{equation}\label{Wp2}
e^*(z_0-py)\equiv gpe^*z_1\pmod{p^2}.
 \end{equation}
 By Lemma \ref{yz}, we have  $W^*z_1\equiv e^*z_1(1,\lambda_0,\ldots,\lambda_0^{n-1})^\T\pmod{p}$ and hence
 \begin{equation}\label{Wp3}
 gpW^*z_1\equiv gpe^*z_1(1,\lambda_0,\ldots,\lambda_0^{n-1})^\T\pmod{p^2}.
 \end{equation}
 It follow from (\ref{Wp1}), (\ref{Wp2}) and (\ref{Wp3}) that $ W^*(z_0-py)\equiv  gpW^*z_1\pmod{p^2},$ i.e., $ W^*(z_0-py-gpz_1)\equiv  0\pmod{p^2}$.
This completes the proof of this lemma as $z_0-py-gpz_1\equiv z_0\not\equiv 0\pmod{p}$.
\end{proof}
\noindent\emph{Proof of Theorem \ref{mainodd}.} We may assume $p\in \Gamma$. Suppose to the contrary that  $p\mid \ell$. Then by Lemma \ref{ellz0}, $\ell\mid d_n$ and hence $p\mid d_n$. Note that $\det W=ud_1d_2\cdots d_n$ for some unit $u$.  As $p^2\not\mid \det W$, we find that $p\not\mid d_k$ for any $k\in\{1,2,\ldots,n-1\}$ and $p^2\not\mid d_n$. Thus, $\rank_p W=n-1$. It follows from Lemmas \ref{ellz0} and  \ref{solutionmodp2} that  $W^*z\equiv 0\pmod{p^2}$ has a solution $z\not\equiv 0\pmod{p}$. Using Lemma \ref{psolutionanddn}, $p^2\mid d_n$. This is a contradiction and hence completes the proof. $\Box$

\subsection{The case $p=1+i$}
\begin{theorem}\label{maineven}
Let $G\in \mathcal{G}_n^{sc}$ such that $\det W(G)\neq 0$. Let $U\in \mathscr{U}_G$ with level $\ell$. If $2^{\lfloor\frac{n}{2}\rfloor+1}\not\mid \det W$, then $2\not\mid \ell$.
\end{theorem}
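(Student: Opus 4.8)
The plan is to argue by contradiction: assume $2\mid\ell$, equivalently $(1+i)^2\mid\ell$. The first step is to pin down the exact power of $1+i$ dividing $\det W$. By Corollary~\ref{detrank}(1) we have $(1+i)^{2\lfloor n/2\rfloor}\mid\det W$; the hypothesis $2^{\lfloor n/2\rfloor+1}\nmid\det W$ is equivalent to $(1+i)^{2\lfloor n/2\rfloor+2}\nmid\det W$; and since $G$ is self-converse, $\det W$ is real or pure imaginary by Theorem~\ref{Wrealorimag}, so the exponent of $1+i$ in $\det W$ is even. Hence this exponent is exactly $2\lfloor n/2\rfloor$, so $\tfrac{\det W}{2^{\lfloor n/2\rfloor}}$ is odd and Corollary~\ref{rankceil} gives $r:=\rank_{1+i}W=\lceil n/2\rceil$, whence $n-r=\lfloor n/2\rfloor$ and, by Lemma~\ref{firstr}, $e,Ae,\dots,A^{r-1}e$ are independent over $\mathbb{Z}[i]/(1+i)$.

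The second step extracts the key vector, as in Lemma~\ref{ellz0}. Fix $H\in\mathcal{G}_n^{sc}$ with $U\in\mathscr{U}_G(H)$ and put $U_1=\ell U$: then $U_1$ is Gaussian integral with $U_1^*U_1=N(\ell)I$, $U_1e=\ell e$, $A(G)U_1=U_1A(H)$ and $W^*(G)U_1=\ell W^*(H)$. Since $(1+i)\mid\ell$ and $\ell$ is minimal, not all entries of $U_1$ are divisible by $1+i$, so some column $z_0$ of $U_1$ satisfies $z_0\not\equiv 0\pmod{1+i}$. From $W^*(G)U_1=\ell W^*(H)$ and $(1+i)^2\mid\ell$ one gets $W^*(G)z_0\equiv 0\pmod 2$, i.e. $e^*A^kz_0\equiv 0\pmod 2$ for all $k\ge 0$ (Cayley--Hamilton handles $k\ge n$); and from $U_1^*A^k(G)U_1=N(\ell)A^k(H)$ with $4\mid N(\ell)$, reading a diagonal entry, $z_0^*A^k(G)z_0\equiv 0\pmod 4$ for all $k\ge 0$.

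The third and decisive step is to contradict the combination ``$r=\lceil n/2\rceil$ and $\det W$ carries exactly $2(n-r)$ factors of $1+i$'' using such a $z_0$. A natural route is to form the walk-type matrix $\widetilde{W}=(e,Ae,\dots,A^{r-1}e,\,z_0,Az_0,\dots,A^{n-r-1}z_0)$ and study its Gram matrix $\widetilde{W}^*\widetilde{W}$: its leading $r\times r$ block has all entries even except the corner $e^*e=n$ (Lemma~\ref{totalsumAk}), its cross blocks have entries $e^*A^kz_0\equiv 0\pmod 2$, and its trailing $(n-r)\times(n-r)$ block has entries $z_0^*A^kz_0\equiv 0\pmod 4$, so a column/row count of the type used for Corollary~\ref{detrank} and Lemma~\ref{detvsrank} forces many factors of $1+i$ into $\det(\widetilde{W}^*\widetilde{W})$; it then remains to relate $\det\widetilde{W}$ to $\det W$ (by comparing the $\mathbb{Z}[i]$-lattices spanned by the two column sets, or by reworking the factorization $\det W=(1+i)^{n-r}\det(W_1\mid\widehat{G})$ implicit in the proof of Lemma~\ref{detvsrank}, with $W_1=(e,\dots,A^{r-1}e)$, $\widehat{G}=(g,Ag,\dots,A^{n-r-1}g)$, $g=Be/(1+i)$, $B=A^r-\sum_{j=1}^{r}c_jA^{r-j}$, replacing $g$ by a vector built from $z_0$) so as to conclude $(1+i)^{2(n-r)+1}\mid\det W$, a contradiction. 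An alternative is to mimic the odd-prime chain (Lemmas~\ref{ellz0}--\ref{solutionmodp2} and Theorem~\ref{mainodd}), replacing ``$\rank_pW=n-1$'' by ``$\rank_{1+i}W=\lceil n/2\rceil$'' and the single eigenvector by the $A(G)$-invariant subspace spanned by the columns of $U_1$ modulo $1+i$, which lies in the kernel of $W^*(G)$ modulo $1+i$.

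I expect this last step to be the main obstacle. There is no ``eigenvector mod $p$'' available here, and the Smith-normal-form bookkeeping (Lemma~\ref{psolutionanddn}) gives nothing by itself, since the bound $2^{\lfloor n/2\rfloor}\mid\det W$ is already tight: merely producing a solution of $W^*z\equiv 0\pmod{(1+i)^2}$ with $z\not\equiv 0\pmod{1+i}$ does not contradict the hypothesis. The argument must therefore genuinely use that $G$ is self-converse --- which is exactly what forces the exponent of $1+i$ in $\det W$ to be even, hence equal to $2(n-r)$ and not $2(n-r)+1$ --- and must use the norm identity $z_0^*z_0=N(\ell)$ more finely than the odd case, e.g. the parity remark that $z_0^*z_0\equiv 0\pmod 4$ forces the number of ``odd'' coordinates of $z_0$ to be even while $z_0\not\equiv 0\pmod{1+i}$ forces it to be positive, combined with how this parity propagates through $W^*(G)z_0\equiv 0\pmod 2$ and the higher congruences $z_0^*A^k(G)z_0\equiv 0\pmod 4$.
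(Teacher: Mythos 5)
Your first two steps are correct and coincide with the paper's setup: the hypothesis forces $\frac{\det W}{2^{\lfloor n/2\rfloor}}$ to be odd (using Theorem~\ref{Wrealorimag} to rule out an odd power of $1+i$), hence $\rank_{1+i}W=\lceil n/2\rceil$ by Corollary~\ref{rankceil}, and the column $z_0=u$ of $U_1=\ell U$ with $u\not\equiv 0\pmod{1+i}$, $W^*u\equiv 0\pmod 2$ and $u^*A^ku\equiv 0\pmod 4$ is exactly the vector produced in the first half of the paper's Lemma~\ref{uandx}. But the decisive third step is missing, and you say so yourself. The determinant-counting route you sketch does not work as stated: the columns $z_0,Az_0,\dots,A^{n-r-1}z_0$ are not obtained from the columns of $W$ by a unimodular change of basis (one has $z_0=\tfrac{\ell}{\det W(H)}W(G)\,\textup{adj}(W(H))e_t$, so the transition matrix has denominators), hence extra factors of $1+i$ in $\det(\widetilde{W}^*\widetilde{W})$ do not transfer to $\det W$, and the SNF argument gives nothing since, as you note, the bound $2^{\lfloor n/2\rfloor}\mid\det W$ is already tight.

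The paper's actual mechanism is different in kind and consists of three ideas you do not have. First (Lemma~\ref{fundasol} plus the Claim in Lemma~\ref{uandx}), the relation $u\equiv \tilde{W}v\pmod{1+i}$, where $\tilde{W}$ consists of the \emph{first} $\lfloor n/2\rfloor$ columns of $W$ (shifted by one when $n$ is odd), is \emph{lifted} to $u\equiv\tilde{W}x\pmod 2$; the obstruction $b=\frac{u-\tilde{W}v}{1+i}$ is shown to lie in the column space of $\tilde{W}$ mod $1+i$ because the kernel of $\tilde{W}^*$ mod $1+i$ coincides with the column space of $W$ mod $1+i$ and $W^*u\equiv W^*\tilde{W}\equiv 0\pmod 2$. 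Second, substituting $u=\tilde{W}x+2\beta$ into $u^*A^ju\equiv 0\pmod 4$ leaves $x^*R^{(j)}x\equiv 0\pmod 4$ with $R^{(j)}=\tilde{W}^*A^j\tilde{W}$; since $R^{(j)}$ is a \emph{real symmetric} matrix with even entries (its entries are the numbers $e^*A^{m}e$), the off-diagonal terms pair up into $2r^{(j)}_{s,t}\Re(x_s^*x_t)\equiv 0\pmod 4$, reducing the quadratic condition to the linear condition $[r^{(j)}_{1,1},\dots,r^{(j)}_{k,k}]x\equiv 0\pmod{2(1+i)}$. Third, stacking these over $j=0,\dots,n-1$ produces exactly $W^*\tilde{W}_1x\equiv 0\pmod{2(1+i)}$, and the matrix $\frac{W^*\tilde{W}_1}{2}$ is shown to be nonsingular mod $1+i$ (Lemma~\ref{rankWW1}, from $\det W^*W=2^nb^2$ with $b$ odd), forcing $x\equiv 0\pmod{1+i}$ and the contradiction. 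None of the parity heuristics at the end of your proposal substitute for this chain, so the proof is genuinely incomplete.
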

Set $k=\lfloor\frac{n}{2}\rfloor$. Let $\tilde{W}$ and $\tilde{W}_1$ be the matrix defined as follows:
\begin{equation}
\tilde{W}= \begin{cases}
  (e,Ae,\ldots,A^{k-1}e) &\mbox{if $n$ is even,}\\
    (Ae,A^2e,\ldots,A^{k}e) &\mbox{if $n$ is odd,}
   \end{cases}
\end{equation}
and
\begin{equation}
\tilde{W}_1= \begin{cases}
  (e,A^2e,\ldots,A^{2k-2}e) &\mbox{if $n$ is even,}\\
    (Ae,A^3e\ldots,A^{2k-1}e) &\mbox{if $n$ is odd.}
   \end{cases}
\end{equation}
\begin{lemma}\label{fundasol}
Let $G\in\mathcal{G}_n^{sc}$ such that $\frac{\det W}{2^{\lfloor\frac{n}{2}\rfloor}}$ is odd. Then the columns of $\tilde{W}$ constitute a set of fundamental solutions to $W^*z\equiv 0\pmod {1+i}.$
\end{lemma}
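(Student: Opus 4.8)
The plan is to prove the lemma by a dimension count combined with two direct verifications. First I would record that the solution space of $W^{*}z\equiv 0\pmod{1+i}$, viewed over the field $\mathbb{Z}[i]/(1+i)\cong GF(2)$, has dimension exactly $k=\lfloor n/2\rfloor$. Indeed, the hypothesis that $\frac{\det W}{2^{\lfloor n/2\rfloor}}$ is odd gives $\rank_{1+i}W=\lceil n/2\rceil$ by Corollary~\ref{rankceil}, and $W$ and $W^{*}$ have the same SNF by Lemma~\ref{sameSNF}, so $\rank_{1+i}W^{*}=\lceil n/2\rceil$ as well; hence the null space of $W^{*}$ over $GF(2)$ has dimension $n-\lceil n/2\rceil=k$. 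Since $\tilde W$ has exactly $k$ columns, it suffices to show that (i) every column of $\tilde W$ satisfies $W^{*}z\equiv 0\pmod{1+i}$, and (ii) the columns of $\tilde W$ are linearly independent over $\mathbb{Z}[i]/(1+i)$.

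For (i) I would use that the $(j+1)$-st row of $W^{*}$ is $e^{*}A^{j}$ (here $A^{*}=A$), so $W^{*}z\equiv 0\pmod{1+i}$ is equivalent to $e^{*}A^{j}z\equiv 0\pmod{1+i}$ for $j=0,1,\ldots,n-1$. Taking $z=A^{m}e$ to be a column of $\tilde W$, this reduces to $e^{*}A^{j+m}e\equiv 0\pmod{1+i}$. Whenever $j+m\ge 1$ this follows from Lemma~\ref{totalsumAk}, which gives $e^{*}A^{j+m}e\equiv 0\pmod 2$ and hence $\pmod{1+i}$. The only remaining case is $j=m=0$, which can occur only when $n$ is even (for $n$ odd, each column of $\tilde W$ is $A^{m}e$ with $m\ge 1$); there $e^{*}A^{0}e=e^{*}e=n\equiv 0\pmod{1+i}$ precisely because $n$ is even. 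This establishes (i).

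For (ii) I would observe that the columns of $\tilde W$ form a sub-collection of the first $\lceil n/2\rceil$ columns of $W$: for $n$ even they are exactly $e,Ae,\ldots,A^{k-1}e$, the first $k=\lceil n/2\rceil$ columns; for $n$ odd they are $Ae,A^{2}e,\ldots,A^{k}e$, a subset of the first $k+1=\lceil n/2\rceil$ columns $e,Ae,\ldots,A^{k}e$. By Lemma~\ref{firstr}, the first $\rank_{1+i}W=\lceil n/2\rceil$ columns of $W$ are linearly independent over $\mathbb{Z}[i]/(1+i)$, hence so is any sub-collection of them, giving (ii). Combining (i), (ii) and the dimension count, the $k$ columns of $\tilde W$ span the $k$-dimensional solution space and are independent, so they constitute a set of fundamental solutions.

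I do not expect a genuine obstacle here: the argument is essentially bookkeeping on top of Corollary~\ref{rankceil}, Lemma~\ref{sameSNF}, Lemma~\ref{totalsumAk} and Lemma~\ref{firstr}. The only point needing mild care is the case split on the parity of $n$, and in particular checking that the exceptional inner product $e^{*}e=n$ is absorbed by the evenness of $n$ exactly in the case ($n$ even) in which $\tilde W$ actually contains the column $e$.
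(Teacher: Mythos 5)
Your proof is correct and follows essentially the same route as the paper's: verify each column of $\tilde W$ solves $W^{*}z\equiv 0\pmod{1+i}$ via Lemma~\ref{totalsumAk} (with the $e^{*}e=n$ case absorbed by the parity of $n$), count dimensions using Corollary~\ref{rankceil}, and get independence from Lemma~\ref{firstr}. Your treatment is in fact slightly more careful than the paper's, which only writes out the even case and leaves the odd case (where the columns of $\tilde W$ are a proper sub-collection of the first $\lceil n/2\rceil$ columns of $W$) to the reader.
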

\begin{proof}
We only consider the case that $n$ is even while the odd case can be settled in a similar way. Note that $e^*e=n\equiv 0\pmod{2}$. By Lemma \ref{totalsumAk}, we finds that  $W^*\tilde{W}\equiv 0\pmod{2}$ and hence $W^*\tilde{W}\equiv 0\pmod {1+i}$. Thus, each columns of $\tilde{W}$ is a solution to $W^*z\equiv 0\pmod {1+i}$. By Corollary  \ref{rankceil}, $\rank_{1+i} W=\lceil\frac{n}{2}\rceil$ and hence any  set of fundamental solutions has exactly $n-\lceil\frac{n}{2}\rceil=k$ vectors. Note that $\tilde{W}$ has exactly $k$ columns. By Lemma \ref{firstr},  these $k$ columns are linearly independent and hence  constitute a set of fundamental solutions.
\end{proof}

By Lemma \ref{totalsumAk} and the fact that $e^*e\equiv 0\pmod{2}$ when $n$ is even, one easily sees that all entries in $W^*\tilde{W}_1$ are divisible by $2$. That is, $\frac{W^*\tilde{W}_1}{2}$ is Gaussian integral. We show that this matrix has full column rank over $\mathbb{Z}[i]/(1+i)$, which is a generalization of \cite[Lemma 3.10]{wang2017JCTB} for undirected graphs. The previous proof can be extended easily to mixed graphs.
\begin{lemma}\label{rankWW1}
Let $G\in\mathcal{G}_n^{sc}$ such that $\frac{\det W}{2^{\lfloor\frac{n}{2}\rfloor}}$ is odd. Then we have $\rank_{1+i} \frac{W^*\tilde{W}_1}{2}=\lfloor\frac{n}{2}\rfloor$.
\end{lemma}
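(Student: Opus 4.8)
The plan is to reduce the statement to a rank computation over $GF(2)\cong\mathbb{Z}[i]/(1+i)$ involving the Gram matrix $W^*W$, mimicking the undirected argument of \cite{wang2017JCTB} and checking that its ingredients survive the passage to mixed graphs. Write $s_m=e^*A^me$. The first point is that $\frac{W^*\tilde W_1}{2}$ is a \emph{real} integer matrix: the entries of $W^*\tilde W_1$ are among the $s_m$ with $m\ge 1$, together with $s_0=n$ only when $n$ is even; since $A$ is Hermitian each $s_m$ is a real integer, by Lemma \ref{totalsumAk} each $s_m$ with $m\ge 1$ is even, and $s_0=n$ is even when $n$ is even. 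Hence $\frac{W^*\tilde W_1}{2}$ has ordinary integer entries, and $\rank_{1+i}\frac{W^*\tilde W_1}{2}$ equals its rank over $GF(2)$; from here the edge orientations play no further role.

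Now set $M=W^*W=(s_{j+k})_{0\le j,k\le n-1}$, a real Hankel matrix with $\det M=\overline{\det W}\cdot\det W=|\det W|^2$. The hypothesis that $\frac{\det W}{2^{\lfloor n/2\rfloor}}$ is odd says exactly that $2^{2\lfloor n/2\rfloor}$ divides $\det M$ but $2^{2\lfloor n/2\rfloor+1}$ does not. Suppose first $n$ is even, so $2\lfloor n/2\rfloor=n$ and every entry of $M$ is even; then $T:=\frac M2$ is integral and $\det M=2^n\det T$, so $\det T$ is odd and $T$ is invertible over $GF(2)$. Since the $l$-th column of $\tilde W_1$ is $A^{2l}e$, i.e. the $(2l)$-th column of $W$, the $l$-th column of $\frac{W^*\tilde W_1}{2}$ is precisely the $(2l)$-th column of $T$; letting $l$ range over $0,\dots,\frac n2-1$ gives $\lfloor n/2\rfloor$ distinct columns of the invertible matrix $T$, hence independent over $GF(2)$, which proves the claim in this case.

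For odd $n$ only the top-left entry of $M$ is odd, so I would instead deflate: put $W''=(Ae,A^2e,\dots,A^{n-1}e)$, the columns of $W$ other than $e$, so $K:=W''^*W''=(s_{a+b})_{1\le a,b\le n-1}$ has all entries even and $T'':=\frac K2$ is integral. Writing $W=[\,e\mid W''\,]$ and expanding $\det(W^*W)$ along its first row and column via $\det\begin{pmatrix}a&b^\T\\ c&D\end{pmatrix}=a\det D-b^\T\,\mathrm{adj}(D)\,c$, together with $\mathrm{adj}(2T'')=2^{n-2}\mathrm{adj}(T'')$, yields $\det(W^*W)=2^{n-1}\bigl(n\det T''-2\,\xi\bigr)$ for some integer $\xi$. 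Since $2\lfloor n/2\rfloor=n-1$, the exact power $2^{n-1}$ divides $\det(W^*W)$, so $n\det T''-2\xi$ is odd; as $n$ is odd this forces $\det T''$ to be odd, hence $T''$ is invertible over $GF(2)$. Deleting the $0$-th row from $\frac{W^*\tilde W_1}{2}$ then gives an $(n-1)\times\lfloor n/2\rfloor$ matrix whose $l$-th column equals the $(2l+1)$-st column of $T''$; these are $\lfloor n/2\rfloor$ distinct columns of an invertible matrix, hence independent over $GF(2)$, so $\rank_{1+i}\frac{W^*\tilde W_1}{2}\ge\lfloor n/2\rfloor$, and equality holds because there are only $\lfloor n/2\rfloor$ columns.

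I do not expect a genuine obstacle here: the argument is the undirected one, and the only facts that need verification for mixed graphs — $A^*=A$, the integrality and parity of $e^*A^me$ (Lemma \ref{totalsumAk}), and $\det(W^*W)=|\det W|^2$ — all hold verbatim. The single step requiring a short computation rather than direct inspection is the odd-$n$ case, where ``$\det T''$ is odd'' has to be extracted from the cofactor expansion of $\det(W^*W)$.
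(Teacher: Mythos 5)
Your proof is correct, and the even case coincides exactly with the paper's: both halve $W^*W$, note its determinant is the odd number $b^2$, and observe that the columns of $\frac{W^*\tilde W_1}{2}$ are a subset of the columns of this invertible-mod-$(1+i)$ matrix. For odd $n$ you diverge slightly: you delete the column $e$ (and a row) and prove $\det\frac{W''^*W''}{2}$ is odd via a bordered-determinant/cofactor expansion of $\det(W^*W)$, whereas the paper instead \emph{doubles} the first column of $W$ to form $W'$, so that $\frac{W^*W'}{2}$ is an $n\times n$ Gaussian integral matrix with $\det\frac{W^*W'}{2}=\frac{2\,\overline{\det W}\det W}{2^{n}}=b^2$ odd, and the columns of $\frac{W^*\tilde W_1}{2}$ again sit directly among its columns. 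The two devices prove the same thing; the paper's column-doubling avoids your Schur-complement computation entirely and keeps the argument uniform with the even case, while your version has the minor merit of making explicit why the top-left odd entry $s_0=n$ is harmless. Your preliminary observation that $\frac{W^*\tilde W_1}{2}$ is a real integer matrix, so that $\rank_{1+i}$ is just rank over $GF(2)$, is correct and is left implicit in the paper.
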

 \begin{proof}
 We consider the following two cases:\\
 \noindent\emph{Case} 1: $n$ is even.

  By Theorem \ref{Wrealorimag}, we have
 $\det W=u2^{n/2}b$, where $u$ is a unit and $b$ is an odd integer. Thus, $\det W^*W=2^nb^2$ and hence $\det \frac{W^*W}{2}=b^2$. Therefore, $\rank_{1+i}\frac{W^*\tilde{W}}{2}=n$.  Thus, the $n$ columns of $\frac{W^*\tilde{W}}{2}$ are linearly independent, which clearly implies that $ \frac{W^*\tilde{W}_1}{2}$  are also linearly independent. Thus $\rank_{1+i} \frac{W^*\tilde{W}_1}{2}=\lfloor\frac{n}{2}\rfloor$.

 \noindent\emph{Case} 2: $n$ is odd.

 Let $W'$ be the matrix obtained from $W$ by doubling the first column. Then $\frac{W^*W'}{2}$ is Gaussian integral. As  $\det W=u2^{(n-1)/2}b$ (b is odd), we have
 $\det W^*W'=2^nb^2$ and hence $\frac{ W^*W'}{2}$ has full rank $n$.  Thus, $\frac{W^*\tilde{W}_1}{2}$ must have full column rank, i.e, $\rank_{1+i} \frac{W^*\tilde{W}_1}{2}=\lfloor\frac{n}{2}\rfloor$.
 \end{proof}

 \begin{lemma}\label{uandx}
 Let $G\in\mathcal{G}_n^{sc}$ such that $\frac{\det W}{2^{\lfloor\frac{n}{2}\rfloor}}$ is odd. Let $U\in \mathscr{U}_G$ has level $\ell$. If $2\mid \ell$ then there exists a Gaussian integral $n$-dimensional vector $u\not\equiv 0\pmod {1+i}$ and a $\lfloor\frac{n}{2}\rfloor$-dimensional vector $x\not\equiv 0\pmod{1+i}$ such that $u^*A^ku\equiv 0\pmod{4}$, $W^*u\equiv 0\pmod {2}$ and $\tilde{W} x\equiv u\pmod {2}$.
 \end{lemma}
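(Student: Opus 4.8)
The plan is to run the argument of Lemma~\ref{ellz0} with $p=1+i$, the one structural change being that the rank-one eigenrelation $Az_0\equiv\lambda_0 z_0$ used there for odd $p$ must be replaced by a membership relation $\tilde{W}x\equiv u\pmod 2$, because modulo $1+i$ the solution space of $W^*z\equiv 0$ has dimension $\lfloor n/2\rfloor$ rather than $1$.

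First I would fix $H\in\mathcal{G}_n^{sc}$ with $U\in\mathscr{U}_G(H)$; then $U=(W^*(G))^{-1}W^*(H)$, $U^*A^k(G)U=A^k(H)$ for all $k\ge 0$, and $Ue=e$. Put $U_1=\ell U$, which is Gaussian integral by the definition of the level. Since $2\mid\ell$ while $U_1/(1+i)=(\ell/(1+i))U$ is \emph{not} Gaussian integral (otherwise $\ell\mid\ell/(1+i)$, which is false), $U_1$ has a column $u$ with $u\not\equiv 0\pmod{1+i}$. From $W^*(G)U_1=\ell W^*(H)$ and $2\mid\ell$ one gets $W^*(G)u\equiv 0\pmod 2$, and from $U_1^*A^k(G)U_1=N(\ell)A^k(H)$ together with $4=N(2)\mid N(\ell)$ one gets $u^*A^k(G)u\equiv 0\pmod 4$ for every $k\ge 0$. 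This already yields two of the three required conclusions.

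It remains to produce $x\not\equiv 0\pmod{1+i}$ with $\tilde{W}x\equiv u\pmod 2$, and here I would invoke Lemma~\ref{fundasol} twice. Since $W^*(G)u\equiv 0\pmod 2$, a fortiori $W^*(G)u\equiv 0\pmod{1+i}$, so Lemma~\ref{fundasol} gives a $\{0,1\}$-vector $x_0$ with $u\equiv\tilde{W}x_0\pmod{1+i}$; set $v=(u-\tilde{W}x_0)/(1+i)$, which is Gaussian integral. By Lemma~\ref{totalsumAk} (and $e^*e=n\equiv 0\pmod 2$ when $n$ is even) we have $W^*(G)\tilde{W}\equiv 0\pmod 2$, hence $W^*(G)(u-\tilde{W}x_0)\equiv 0\pmod 2$, i.e.\ $W^*(G)v\equiv 0\pmod{1+i}$; applying Lemma~\ref{fundasol} again gives a $\{0,1\}$-vector $w$ with $v\equiv\tilde{W}w\pmod{1+i}$. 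Then $x:=x_0+(1+i)w$ satisfies $\tilde{W}x=\tilde{W}x_0+(1+i)\tilde{W}w\equiv\tilde{W}x_0+(1+i)v=u\pmod 2$, since $\tilde{W}w-v\equiv 0\pmod{1+i}$ and $2$ is an associate of $(1+i)^2$. Finally $x\not\equiv 0\pmod{1+i}$, for otherwise $u\equiv\tilde{W}x\equiv 0\pmod{1+i}$, contradicting the choice of $u$.

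The delicate part is purely the bookkeeping of single versus double factors of $1+i$: the whole lifting succeeds because the columns of $\tilde{W}$ already span \emph{every} solution of $W^*z\equiv 0\pmod{1+i}$, which is the content of Lemma~\ref{fundasol} and ultimately rests on $\rank_{1+i}W=\lceil n/2\rceil$ (Corollary~\ref{rankceil}). As an alternative to the second application of Lemma~\ref{fundasol}, one could use the Smith normal form of $W$ together with the hypothesis that $\det W/2^{\lfloor n/2\rfloor}$ is odd to show that $y\mapsto\tilde{W}y$ is a bijection from $(\mathbb{Z}[i]/(2))^{\lfloor n/2\rfloor}$ onto $\ker(W^*(G)\bmod 2)$, again placing $u$ in the image.
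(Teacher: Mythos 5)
Your proposal is correct, and its overall architecture matches the paper's: extract a column $u$ of $U_1=\ell U$ with $u\not\equiv 0\pmod{1+i}$, derive $u^*A^ku\equiv 0\pmod 4$ and $W^*u\equiv 0\pmod 2$ from $U_1^*A^kU_1=N(\ell)A^k(H)$ and $W^*(G)U_1=\ell W^*(H)$, and then lift the mod-$(1+i)$ representation $u\equiv\tilde{W}x_0$ to a mod-$2$ one via $x=x_0+(1+i)w$. The one place where you genuinely diverge is the justification that the correction term $v=(u-\tilde{W}x_0)/(1+i)$ lies in the column space of $\tilde{W}$ modulo $1+i$. The paper proves this by an orthogonality/rank argument: it shows $z^*v\equiv 0\pmod{1+i}$ for every $z$ in the kernel of $\tilde{W}^*$ (using $\operatorname{rank}_{1+i}W=\lceil n/2\rceil$ to write $z\equiv Wa$), and concludes $\operatorname{rank}_{1+i}(\tilde{W},v)=\operatorname{rank}_{1+i}\tilde{W}$. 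You instead observe directly that $W^*v\equiv 0\pmod{1+i}$ (since $W^*(u-\tilde{W}x_0)\equiv 0\pmod 2$ and $2$ is an associate of $(1+i)^2$) and invoke Lemma~\ref{fundasol} a second time. Your route is shorter and cleaner, as it reuses the fundamental-solution lemma rather than re-deriving a duality statement over $\mathbb{Z}[i]/(1+i)$; both ultimately rest on the same ingredient, namely $\operatorname{rank}_{1+i}W=\lceil n/2\rceil$ from Corollary~\ref{rankceil}.
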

 \begin{proof}
 Let $H\in\mathcal{G}_n^{sc}$ such that $\mathscr{U}_G(H)=\{U\}$. Write $U_1=\ell U$. Note that $1+i$ divides $\ell$. Due to the minimality of $N(\ell)$, $U_1$ contains a column $u$ such that $u\not\equiv 0\pmod{1+i}$. Since $U_1^*A^k(G)U_1=\overline{\ell}\ell U^*A^k(G)U=\overline{\ell}\ell A^k(H)$ and $2\mid \ell$, we have  $U_1^*A^kU_1\equiv 0\pmod{4}$ and hence $u^*A^ku\equiv 0\pmod 4$. Similarly, we have $W^*(G)U_1=\ell W^*(G)U=\ell W^*(H)\equiv 0\pmod {2}$ and hence $W^*u\equiv 0\pmod{2}$. It remains to find a vector $x\not\equiv 0\pmod{1+i}$ such that  $\tilde{W} x\equiv u\pmod{2}$.

Since $W^*u\equiv 0\pmod{2}$, we have $W^*u\equiv 0\pmod{1+i}$. It follow from Lemma \ref{fundasol} that $u$ can be expressed as a linear combination of columns of $\tilde{W}$, that is, there exists a vector $v$ such that
$u\equiv \tilde{W}v\pmod{1+i}$.

\noindent\textbf{Claim}: $\tilde{W}y\equiv \frac{u-\tilde{W}v}{1+i}\pmod{1+i}$ has a solution for unknown vector $y$.

Write $b=\frac{u-\tilde{W}v}{1+i}$. Let $z$ be any vector satisfying $\tilde{W}^*z\equiv 0\pmod{1+i}$. If we can show that $b^*z\equiv 0\pmod{1+i}$ always hold, then the equations $\tilde{W}^*x\equiv 0\pmod{1+i}$ and $(\tilde{W},b)^*x\equiv 0\pmod{1+i}$ have the same solutions, which implies that $\rank_{1+i}\tilde{W}^*=\rank_{1+i}(\tilde{W},b)^*$, i.e., $\rank_{1+i}\tilde{W}=\rank_{1+i}(\tilde{W},b)$ and hence the claim follows.

As $\tilde{W}^*$ has full column rank $k=\lfloor\frac{n}{2}\rfloor$,  the solution space of $\tilde{W}^*x\equiv 0\pmod{1+i}$ has dimension $\lceil\frac{n}{2}\rceil$. As $\tilde{W}^*W\equiv 0\pmod{2}$, we have $\tilde{W}^*W\equiv 0\pmod{1+i}$. By Corollary \ref{rankceil} we have $\rank_{1+i} W=\lceil\frac{n}{2}\rceil$ and hence $z$ belongs to the column space of  $W$. Thus, we can write $z\equiv Wa \pmod{1+i}$ for some vector $a$. Therefore,
  \begin{eqnarray}
z^*b&\equiv& (Wa)^*\frac{u-\tilde{W}v}{1+i}\\
&\equiv&a^*\frac{W^*u-W^*\tilde{W}v}{1+i}\\
&\equiv &0\pmod{1+i},
\end{eqnarray}
where the last congruence holds because  $W^*u\equiv 0\pmod{2}$ and $W^*\tilde{W}\equiv 0\pmod 2$. Thus,  $b^*z\equiv 0\pmod{1+i}$ and the Claim holds.

Let $x=v+(1+i)y$. By the claim, we have $\tilde{W}(1+i)y\equiv u-\tilde{W}v\pmod{2}$. Thus, $\tilde{W}x\equiv u\pmod{2}$. Finally, as $u\not\equiv 0\pmod{1+i}$, we must have $x\not\equiv 0\pmod{1+i}$. This completes the proof of this lemma.
 \end{proof}
\noindent\emph{Proof of Theorem \ref{maineven}.}
Suppose to the contrary that $2\mid \ell$. Let $u$ and $x$ be vectors described as in Lemma \ref{uandx}. As $\tilde{W}x\equiv u\pmod{2}$, we have $u=\tilde{W}x+2\beta$ for some vector $\beta$. It follows that
  \begin{eqnarray}\label{removebeta}
u^*A^ju&=&(\tilde{W}x+2\beta)^*A^j(\tilde{W}x+2\beta)\nonumber\\
       &=&x^*\tilde{W}^*A^j\tilde{W}x+2\beta^*A^j\tilde{W}x+2x^*\tilde{W}^*A^j\beta+4\beta^*A^j\beta\nonumber\\
       &=&x^*\tilde{W}^*A^j\tilde{W}x+4\Re(\beta^*A^j\tilde{W}x)+4\beta^*A^j\beta\nonumber\\
       &\equiv&x^*\tilde{W}^*A^j\tilde{W}x\pmod{4}\label{uAju}.
\end{eqnarray}
Since $u^*A^ju\equiv 0\pmod{4}$ by Lemma \ref{uandx}, from (\ref{uAju}), we have
\begin{equation}\label{xWAWx}
x^*\tilde{W}^*A^j\tilde{W}x\equiv 0\pmod{4}\quad\text{ for } j=0,1,\ldots,n-1.
\end{equation}
 Define
 \begin{equation}\label{defj}
 (j_1,j_2,\ldots,j_k)=\begin{cases}
 (0,1,\ldots,\frac{n}{2}-1)&\mbox{if $n$ is even,}\\
 (1,2,\ldots,\frac{n-1}{2})&\mbox{if $n$ is odd,}
 \end{cases}
 \end{equation}
where $k=\lfloor\frac{n}{2}\rfloor$. Then $\tilde{W}=[A^{j_1}e,A^{j_2}e,\ldots,A^{j_k}e]$. Let $R^{(l)}=(r_{s,t}^{(l)})=\tilde{W}^*A^l\tilde{W}$, $l=0,1,\ldots,n-1$. Then we have
 \begin{equation}\label{rstl}
 r_{s,t}^{(l)}=e^*A^{j_s+j_t+l}e.
 \end{equation}
  We claim that $R^{(l)}$ is a real symmetric matrix with each entry even. Clearly, by (\ref{rstl}), $r_{s,t}^{(l)}$ is real and $r_{s,t}^{(l)}=r_{t,s}^{(l)}$. It remains to show that $2\mid r_{s,t}^{(l)}$. If $j_s+j_t+l>0$ then the claim follows by Lemma \ref{totalsumAk}. Now assume $j_s=j_t=l=0$. According to (\ref{defj}), $n$ must be even in this case. Thus, $e^*A^{j_s+j_t+l}e$ is an even integer and the claim also holds. This proves the claim. It follows that
\begin{eqnarray*}
x^*\tilde{W}^*A^l\tilde{W}x&=&\sum_{1\le s,t\le k}x_s^*r_{s,t}^{(l)}x_t\\
&=&\sum_{1\le s\le k}x_s^*r_{s,s}^{(l)}x_s+\sum_{1\le s<t\le k}x_s^*r_{s,t}^{(l)}x_t+\sum_{1\le t<s\le k}x_s^*r_{s,t}^{(l)}x_t\\
&=&\sum_{1\le s\le k}x_s^*r_{s,s}^{(l)}x_s+\sum_{1\le s<t\le k}\big(x_s^*r_{s,t}^{(l)}x_t+x_t^*r_{t,s}^{(l)}x_s\big)\\
&=&\sum_{1\le s\le k}x_s^*r_{s,s}^{(l)}x_s+\sum_{1\le s<t\le k}r_{s,t}^{(l)}\big(x_s^*x_t+\overline{x_s^*x_t}\big)\\
&=&\sum_{1\le s\le k}x_s^*r_{s,s}^{(l)}x_s+\sum_{1\le s<t\le k}2r_{s,t}^{(l)}\Re(x_s^*x_t)\\
&\equiv &\sum_{1\le s\le k}x_s^*r_{s,s}^{(l)}x_s\pmod{4}.
\end{eqnarray*}
Thus, from (\ref{xWAWx}), we have
\begin{equation}\label{alpharalpha}
\sum_{1\le s\le k}x_s^*r_{s,s}^{(l)}x_s\equiv 0\pmod{4}.
\end{equation}
Note that $x_s^*x_s$ and $x_s$ have the same Gaussian parity, i.e., $x_s^*x_s\equiv x_s\pmod{1+i}$. As $r_{s,s}^{(l)}$ is an even integer, we have
$x_s^*r_{s,s}^{(l)}x_s\equiv r_{s,s}^{(l)}x_s\pmod{2(1+i)}$. Thus, from (\ref{alpharalpha}), we have
\begin{equation}\label{alpharalpha2}
[r_{1,1}^{(l)},r_{2,2}^{(l)},\ldots,r_{k,k}^{(l)}]x\equiv 0\pmod{2(1+i)},~ l=0,1,\ldots,n-1.
\end{equation}
Moreover, we have
\begin{eqnarray*}
&&\left(\begin{matrix}
r_{1,1}^{(0)}&r_{2,2}^{(0)}&\ldots&r_{k,k}^{(0)}\\
r_{1,1}^{(1)}&r_{2,2}^{(1)}&\ldots&r_{k,k}^{(1)}\\
\cdots&\cdots&\cdots&\cdots\\
r_{1,1}^{(n-1)}&r_{2,2}^{(n-1)}&\cdots&r_{k,k}^{(n-1)}\\
\end{matrix}
\right)\\
&=&\left(\begin{matrix}
e^*A^{2j_1}e&e^*A^{2j_2}e&\cdots&e^*A^{2j_k}e\\
e^*A^{2j_1+1}e&e^*A^{2j_2+1}e&\cdots&e^*A^{2j_k+1}e\\
\vdots&\vdots&\cdots&\vdots\\
e^*A^{2j_1+n-1}e&e^*A^{2j_2+n-1}e&\cdots&e^*A^{2j_k+n-1}e\\
\end{matrix}
\right)\\
&=&\left(
\begin{matrix}
e^*\\e^*A\\\vdots\\e^*A^{n-1}\\
\end{matrix}
\right)
\left(A^{2j_1}e,A^{2j_2}e,\ldots,A^{2j_k}e\right)\\
&=&W^*\tilde{W}_1.
\end{eqnarray*}
Thus, we can rewrite (\ref{alpharalpha2}) as
\begin{equation}
W^*\tilde{W}_1x\equiv 0\pmod{2(1+i)}.
\end{equation}
As $\frac{W^*\tilde{W}_1}{2}$ is Gaussian integral, the equation is equivalent to
\begin{equation}
\frac{W^*\tilde{W}_1}{2}x\equiv 0\pmod{1+i}.
\end{equation}
From Lemma \ref{rankWW1}, we know that $\rank_{1+i}\frac{\tilde{W}^*\tilde{W}_1}{2}=\lfloor\frac{n}{2}\rfloor$.   Thus, $x\equiv 0\pmod{1+i}$. This contradiction completes the proof of  Theorem \ref{maineven}. $\Box$

\noindent\emph{Proof of Theorem \ref{main}.}  Theorem \ref{main} follows immediately from Theorems \ref{mainodd} and \ref{maineven}. $\Box$
\section{More discussions on $p=1+i$}
By Theorem \ref{main},  Conjecture \ref{mainconj} (or  Conjecture \ref{mainconj2}) can be reduced to the following
\begin{conjecture}\label{mainconjsim}
Let $G\in \mathcal{G}_n^{sc}$. If $\frac{\det W}{2^{\lfloor n/2 \rfloor}}$ is square-free in  $\mathbb{Z}[i]$, then for any $U\in \mathscr{U}_G$, $\ell(U)\neq 1+i$.
\end{conjecture}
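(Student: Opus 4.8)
The plan is to rule out $\ell(U)=1+i$ by reproducing, at the prime $p=1+i$, the contradiction engine that Theorem~\ref{mainodd} runs at odd primes, while exploiting the \emph{exact} knowledge of the $(1+i)$-valuation of $\det W$ supplied by the square-free hypothesis. Since $\frac{\det W}{2^{\lfloor n/2\rfloor}}$ is square-free and hence odd, and $2=-i(1+i)^2$, the exponent of $1+i$ in $\det W$ equals precisely $2\lfloor n/2\rfloor$; equivalently, by Corollary~\ref{rankceil}, $\rank_{1+i}W=\lceil n/2\rceil$ and the last $\lfloor n/2\rfloor$ elementary divisors of $W$ carry this entire valuation. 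Assuming for contradiction that $\ell(U)=1+i$, I would set $U_1=(1+i)U$, a Gaussian integral matrix with $U_1\not\equiv 0\pmod{1+i}$, and record its relations $U_1^*U_1=2I$, $U_1^*A^kU_1=2A^k(H)$, $U_1e=(1+i)e$ and $W^*U_1=(1+i)W^*(H)$. The last relation shows every column of $U_1$ solves $W^*z\equiv 0\pmod{1+i}$, so by Lemma~\ref{fundasol} the reduction $\overline{U_1}$ has column space inside the $\lfloor n/2\rfloor$-dimensional span of $\tilde W$ over $GF(2)$, while $U_1^*U_1=2I$ forces this column space to be \emph{isotropic} for the standard bilinear form over $GF(2)$.

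Next I would manufacture the analogue of the pseudo-eigenvector of Lemma~\ref{ellz0}: a column, or a Euclidean-algorithm combination of columns, $z_0$ of $U_1$ with $z_0\not\equiv 0\pmod{1+i}$, $W^*z_0\equiv 0\pmod{1+i}$, $z_0^*z_0\equiv 0\pmod 2$, $Az_0\equiv\lambda_0 z_0\pmod{1+i}$ for some $\lambda_0\in\mathbb{Z}[i]$, and the quadratic vanishing $z_0^*A^kz_0\equiv 0\pmod 2$ inherited from $U_1^*A^kU_1=2A^k(H)$. The self-converse symmetry $\overline A=P^{-1}AP$, $\overline W=P^{-1}W$ of Theorem~\ref{Wrealorimag}, here in the sharpened form $P\overline{U_1}=-iU_1Q$ coming from $\overline U=P^{-1}UQ$ as in the proof of Theorem~\ref{realell}, is the intended substitute for the realness step of Lemma~\ref{real}. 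With such a $z_0$ in hand I would attempt the Smith-normal-form descent of Lemmas~\ref{allsolution}--\ref{solutionmodp2}, pushing the congruence to produce a solution of $W^*z\equiv 0$ modulo a power of $1+i$ exceeding $2\lfloor n/2\rfloor$ with $z\not\equiv 0\pmod{1+i}$. Via a Gaussian-integer count of the type in Lemma~\ref{psolutionanddn}, such a solution would force the exponent of $1+i$ in $\det W$ beyond $2\lfloor n/2\rfloor$, or equivalently drop $\rank_{1+i}W$ below $\lceil n/2\rceil$, contradicting Corollary~\ref{rankceil} and the exact valuation above.

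The hard part is exactly where $p=1+i$ differs irreducibly from odd primes. First, Lemma~\ref{real} fails at its root: the residue field $\mathbb{Z}[i]/(1+i)=GF(2)$ carries no nontrivial conjugation, so $\lambda_0\equiv\overline{\lambda_0}\pmod{1+i}$ holds \emph{automatically} and the argument separating the eigenvalues of $z_0$ and $P\overline{z_0}$ collapses; one can no longer conclude the eigenvalue is real and then split the $n-2$ versus $n-1$ rank cases of Lemma~\ref{allsolution}. Second, every cross term $z_s^*a_{s,t}z_t+\overline{z_s^*a_{s,t}z_t}=2\Re(\cdot)$ that was harmlessly even in the odd-prime quadratic computations now sits exactly at the modulus of interest, so each form $z\mapsto z^*A^kz$ degenerates to a Frobenius-linear object over $GF(2)$ and the congruences it yields are a full power weaker than the $U_1^*A^kU_1\equiv 0\pmod 4$ that powered Theorem~\ref{maineven}. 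Third, and most seriously, because the full valuation $2\lfloor n/2\rfloor$ is \emph{already present} in $\det W$, there is no cheap ``$p^2\nmid\det W$'' contradiction as in Theorem~\ref{mainodd}: one must control the exact second-order behaviour modulo $(1+i)^{2\lfloor n/2\rfloor+2}$ and show that the isotropic, $\tilde W$-spanned column space of $\overline{U_1}$ is incompatible with saturation of that valuation.

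I expect this third point to be the decisive obstacle, and it is what blocks the general statement from following by routine transcription of the odd case. The one situation in which the degeneracy is repaired is when $A=A(G)$ is real, i.e.\ $G$ is undirected: there $P=I$ and the relation $\overline{U_1}=-iU_1Q$ lets conjugation act genuinely on the columns of $U_1$, reinstating enough of the Lemma~\ref{real}--\ref{solutionmodp2} machinery to force $x\equiv 0\pmod{1+i}$ in the style of Theorem~\ref{maineven}, with its mod-$4$ conditions replaced by the weaker mod-$2$ conditions above and sharpened through Lemma~\ref{rankWW1}. Carrying out the general self-converse case would require replacing the conjugation lost on $GF(2)$ by usable structure extracted from the permutations $P$ and $Q$ alone, which I regard as the genuine crux left open.
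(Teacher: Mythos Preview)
The statement you are trying to prove is precisely the conjecture that the paper leaves \emph{open}. The paper does not contain a proof of it in general; the concluding section says explicitly that ``new insights and techniques are still needed to eliminate the possibility that $\ell(U)=1+i$'' and leaves this as future work. Your proposal is candid about this: you correctly diagnose the three genuine obstructions to transplanting the odd-prime argument of Theorems~\ref{mainodd} and~\ref{maineven} to $p=1+i$ (trivial conjugation on $GF(2)$, degeneracy of the quadratic forms, and saturation of the $(1+i)$-valuation), and you end by conceding that the general self-converse case ``is the genuine crux left open.'' So as a proof of the conjecture, the proposal is incomplete by your own admission, and the paper offers nothing further to compare it with.

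Where a comparison is possible is the undirected special case, which the paper \emph{does} settle, and here your route and the paper's are entirely different. You propose to salvage the odd-prime machinery using $P=I$ and the relation $\overline{U_1}=-iU_1Q$ to make conjugation act nontrivially on columns of $U_1$, then rerun Lemmas~\ref{real}--\ref{solutionmodp2} and~\ref{rankWW1} with mod-$2$ rather than mod-$4$ inputs. The paper instead abandons this analytic line altogether and argues structurally: Lemma~\ref{stdform} shows that any Gaussian rational unitary $U$ with $Ue=e$ and $\ell(U)=1+i$ is, up to permutations, a block-diagonal matrix $U_{k,n-2k}$ built from the single $2\times 2$ block $U_0=\frac{1}{1+i}\left(\begin{smallmatrix}1&i\\ i&1\end{smallmatrix}\right)$; Lemma~\ref{AEB} then checks by direct computation that whenever $A$ is a $(0,1)$-matrix and $U_{k,s}^*AU_{k,s}$ has entries in $\{0,1,\pm i\}$, one in fact has $U_{k,s}^*AU_{k,s}=A$. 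This forces $U_{k,n-2k}^*W(G_1)=W(G_1)$ with $W(G_1)$ invertible, an immediate contradiction since $U_{k,n-2k}\neq I$. The paper's argument thus hinges on the explicit shape of $U$ and the $(0,1)$-entry constraint on $A(G)$, not on any eigenvector-lifting or Smith-normal-form descent; it is shorter and more elementary for the undirected case but gives no leverage on genuinely mixed self-converse $G$, which is exactly why the general conjecture remains open.
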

We shall show Conjecture \ref{mainconjsim} is true for the special case that $G$ is an ordinary graph (with no directed edges). This result strengthens Theorem \ref{mainforgraph}. That is, even in $\mathcal{G}_n^{sc}$, which includes all undirected graphs on $[n]$ as a proper subset, the only graphs $\mathbb{R}$-cospectral to $G$ are isomorphic to $G$ if $G$ satisfies the condition of Theorem \ref{mainforgraph}.

For a positive integer $n$, let $\mathcal{M}_n$ denote the set of all $n\times n$ Hermitian Gaussian integral matrices with vanishing diagonal entries.
For  $k\ge 1$ and $s\ge 0$, let $$U_{k,s}=\left(\begin{matrix}
U_0&&&&\\
&U_0&&&\\
&&\ddots&&\\
&&&U_0&\\
&&&&I_s\\
\end{matrix}
\right)
$$
be a matrix of order $2k+s$, where $U_0=\frac{1}{1+i}\left(\begin{matrix}
1&i\\
i&1\\
\end{matrix}
\right)
$ and $I_s$ is the identity matrix of order $s$. It is easy to see that $U^*U=I$, $Ue=e$ and $\ell(U_{k,s})=1+i$.
\begin{lemma}\label{stdform}
Let $U$ be a $n\times n$ Gaussian rational unitary matrix with $Ue=e$ and  $\ell(U)=1+i$. Then there exist two permutation matrices $P$ and $Q$ such that
$PUQ=U_{k,n-2k}$ for some $k\ge 1$.
\end{lemma}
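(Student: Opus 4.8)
The plan is to study the Gaussian integral matrix $L=(1+i)U$ directly and read off its entries. Since $U$ is unitary with $Ue=e$, we have $L^*L=LL^*=2I$ and $Le=(1+i)e$, and also $e^{\T}L=(1+i)e^{\T}$ because $U^*e=U^*Ue=e$, so $L^{\T}e=\overline{L^*e}=\overline{(1-i)e}=(1+i)e$; moreover $L$ is \emph{not} divisible entrywise by $1+i$, since otherwise $U=L/(1+i)$ would be a Gaussian integral unitary matrix with $Ue=e$, forcing $\ell(U)=1$. Because every diagonal entry of $L^*L$ and of $LL^*$ equals $2$, each column and each row of $L$ has squared Euclidean norm $2$; as the only ways to write $2$ as a sum of norms of Gaussian integers are $1+1$ and $2$, every column and every row of $L$ is of exactly one of two types: a \emph{binary} line with precisely two nonzero entries, each a unit; or a \emph{singular} line with a single nonzero entry, an associate of $1+i$.

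First I would determine the actual values from the line-sum conditions. A singular row or column has its unique entry equal to $1+i$ (it has norm $2$ and contributes the full line sum $1+i$), while a binary row or column has its two unit entries equal to $1$ and $i$ in some order, since $\{1,i\}$ is the only multiset of two units summing to $1+i$. Next I would split off the singular part: if row $j$ has its unique entry $1+i$ in column $a$, then column $a$ contains an entry of norm $2$, hence is $(1+i)$ times a coordinate vector and is singular, and $L$ has no other nonzero entry in row $j$ or column $a$. This yields a bijection between singular rows and singular columns and shows that, after permuting rows and columns independently, $L$ becomes block diagonal with one block $(1+i)I_s$ (the singular part, $s\ge 0$) and a complementary $m\times m$ block $L'$ whose every row and every column has exactly one entry $1$ and one entry $i$.

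The crux is the structure of $L'$. Its support is a bipartite graph between rows and columns in which every vertex has degree $2$, hence a disjoint union of (even, simple) cycles of length $\ge 4$; it remains to use $L'^{\,*}L'=2I$ to force every cycle to have length $4$. Indeed, traverse a cycle so that the columns $c_1,c_2,\dots$ appear in cyclic order, interleaved with rows; if the cycle has length $>4$, then consecutive columns $c_1$ and $c_2$ share exactly one row, so the $(c_1,c_2)$ entry of $L'^{\,*}L'$ is a single product $\overline{u}v$ of two units, hence a unit, contradicting $L'^{\,*}L'=2I$. So each cycle contributes a $2\times2$ submatrix all of whose entries are units and whose rows and columns are each $\{1,i\}$; a one-line check shows such a submatrix is either $(1+i)U_0$ or the matrix obtained from $(1+i)U_0$ by interchanging its two rows, and a further transposition of those two rows turns the latter into the former. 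Placing the $k$ cycle blocks along the diagonal followed by the singular block and applying these normalizations, one obtains permutation matrices $P,Q$ with $PLQ=(1+i)U_{k,n-2k}$, i.e. $PUQ=U_{k,n-2k}$; here $m=2k$ and $k\ge 1$, since $k=0$ would make $L=(1+i)$ times a permutation matrix and hence $\ell(U)=1$, contrary to hypothesis.

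The main obstacle is the combinatorial bookkeeping in this last step: justifying cleanly that the $2$-regular bipartite support graph of $L'$ decomposes into cycles and that $L'^{\,*}L'=2I$ rules out every cycle of length exceeding $4$. Everything else — the two line types, the exact entries, and the splitting off of the singular block — is elementary. Note that the self-converse hypothesis plays no role here, as the statement concerns an arbitrary Gaussian rational unitary matrix.
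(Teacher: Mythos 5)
Your proof is correct. It rests on the same two arithmetic observations as the paper's proof --- every line (row or column) of $L=(1+i)U$ has squared norm $2$ and sum $1+i$, forcing it to be either a single entry $1+i$ or the pair $\{1,i\}$; and two binary columns meeting in exactly one row have a unit, hence nonzero, inner product --- but you organize them differently. The paper argues by induction on $n$: it locates one binary row, uses the orthogonality $\alpha^*\beta=0$ of the two columns through its nonzero entries to pair those columns into a single $U_0$ block, splits off that block, and recurses (the identity block $I_s$ emerges only implicitly, from the case $\ell(U_1)=1$ at the bottom of the recursion). You instead classify all lines at once, peel off the singular part as $(1+i)I_s$ explicitly, and then read the remaining pairing off the $2$-regular bipartite support graph, using $L'^{\,*}L'=2I$ to kill every cycle of length exceeding $4$; your "consecutive columns share exactly one row" step is precisely the paper's $\alpha^*\beta\ne 0$ computation in graph-theoretic clothing. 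The global version buys a cleaner separation of the singular and binary parts and avoids induction at the cost of the cycle-decomposition bookkeeping, which you handle correctly; your closing remark that the self-converse hypothesis is not needed here is also accurate, as the paper's lemma likewise assumes only that $U$ is Gaussian rational unitary with $Ue=e$ and $\ell(U)=1+i$.
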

\begin{proof}
We prove the lemma by induction on $n$. Since $\ell(U)=1+i$ and $Ue=e$, $U$ has a row which contains at least two nonzero entries. Let $\tilde{U}=(1+i)U$ and $P_1,Q_1$ be two permutation matrix such that the first two entries of the first row in $P_1\tilde{U}Q_1$ are non-zero. Let $(a_1,a_2,\ldots,a_n)$ denote the first row of $P_1\tilde{U}Q_1$. Now we have $a_1+a_2+\cdots+a_n=1+i$ and $|a_1|^2+|a_2|^2+\cdots+|a_n|^2=2$. As each $a_j$ is Gaussian integral and $a_1,a_2\neq 0$, one must have $(a_1,a_2,\ldots,a_n)=(1,i,0,\ldots,0)$ or $(i,1,0,\ldots,0)$. We may assume $(a_1,a_2,\ldots,a_n)=(1,i,0,\ldots,0)$ since otherwise we can interchange the first two columns of $Q_1$. Let $\alpha$ and $\beta$ denote the first and second columns of $P_1\tilde{U}Q_1$. Note that $U^\T e=e$. Similar considerations indicate that both $\alpha$ and $\beta$ have exactly two non-zeros entries (1 and $i$), thus $\alpha=(1,0,\ldots,0,i,0,\ldots,0)$ and $\beta=(i,0,\ldots,0,1,0,\ldots,0)$. As $\alpha^*\beta=0$ the position of $i$ in $\alpha$ agrees with the position of $1$ in $\beta$. Thus, there exists a permutation matrix $P_2$ such that $P_2P_1\tilde{U}Q_1$ has the following form.
\begin{equation}P_2P_1\tilde{U}Q_1=\left(\begin{matrix}
1&i&0&\cdots&0\\
i&1&0&\cdots&0\\
0&0&*&\cdots&*\\
\vdots&\vdots&\vdots&\cdots&\vdots\\
0&0&*&\cdots&*\\
\end{matrix}
\right).
\end{equation}
Equivalently,
\begin{equation}P_2P_1UQ_1=\left(\begin{matrix}
U_0&&\\
&&U_1\\
\end{matrix}
\right)
\end{equation}
for some $U_1$ of order $n-2$.

If $n=2$ we are done. Suppose that $n\ge 3$.  Let $e_1$ denote the all-one vector of dimension $n-2$. It is easy to see that $U_1$ is a Gaussian rational unitary matrix with $U_{1}e_1=e_1$. Moreover, $\ell(U_1)|\ell(U)$, that is  $\ell(U_1)\in \{1,1+i\}$. If  $\ell(U_1)=1$ then $U_1$ is a permutation matrix. Let
\begin{equation}Q_2=\left(\begin{matrix}
I_2&&\\
&&U_1^{-1}\\
\end{matrix}
\right).
\end{equation}
Then $Q_2$ is a permutation matrix and  $P_2P_1UQ_1Q_2=U_{1,n-2}$. This proves the lemma for the case that $\ell(U_1)=1$.  If $\ell(U_1)=1+i$, by induction hypothesis, there exist  permutation matrices $P'$ and $Q'$ such that
$P'U_1Q'=U_{k',n-2-2k'}$. Let
\begin{equation}P_3=\left(\begin{matrix}
I_2&&\\
&&P'\\
\end{matrix}
\right)
\quad\text{and}\quad
Q_3=\left(\begin{matrix}
I_2&&\\
&&Q'\\
\end{matrix}
\right).
\end{equation}
Then $P_3P_2P_1UQ_1Q_3=U_{1+k',n-2-2k'}.$ This completes the proof of this lemma.
\end{proof}
\begin{lemma}\label{AEB}
Let $A$ be a (0,1)-matrix of order $n$ and $B=U_{k,s}^*AU_{k,s}$ where $2k+s=n$. If each entry of $B$ belongs to $\{0,1,i,-i\}$, then $A=B$.
\end{lemma}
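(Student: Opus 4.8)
The plan is to reduce the claim to an elementary computation on the $2\times 2$ blocks of $U_{k,s}$. First I would partition $[n]$ into the $k$ index-pairs $\{2j-1,2j\}$, $1\le j\le k$, on which $U_{k,s}$ restricts to a copy of $U_0$, together with the $s=n-2k$ singletons on which it restricts to the identity, and then write $A$ and $B=U_{k,s}^*AU_{k,s}$ in the corresponding block form $A=(A_{pq})$, $B=(B_{pq})$. Because $U_{k,s}$ is block diagonal, $B_{pq}=U_p^*A_{pq}U_q$ where each $U_p$ is either $U_0$ or the scalar $1$, so it is enough to prove the block-local statement: if $A_{pq}$ is a $(0,1)$-block and every entry of $U_p^*A_{pq}U_q$ lies in $\{0,1,i,-i\}$, then $U_p^*A_{pq}U_q=A_{pq}$.

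Writing $\sigma=\left(\begin{smallmatrix}0&1\\1&0\end{smallmatrix}\right)$, one has $U_0=\tfrac{1}{1+i}(I_2+i\sigma)$, a polynomial in $\sigma$; hence $U_0$ commutes with $\sigma$ and $I_2$, so $U_0^*(aI_2+b\sigma)U_0=aI_2+b\sigma$, and the all-one vector $(1,1)^{\T}$, being an eigenvector of $\sigma$, is fixed by both $U_0$ and $U_0^*$. The arithmetic heart of the lemma is that the hypothesis forces $A_{pq}$ into exactly these $U_0$-fixed shapes. For the pair--pair case, with $A_{pq}=\left(\begin{smallmatrix}a&b\\c&d\end{smallmatrix}\right)$ one computes $(B_{pq})_{1,1}=\tfrac12\big((a+d)+i(b-c)\big)$; since its real part must lie in $\{0,1\}$ we get $a=d$, and then its imaginary part must lie in $\{0,\pm1\}$, forcing $b=c$, so $A_{pq}=aI_2+b\sigma$ and $B_{pq}=A_{pq}$. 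The pair--singleton and singleton--pair cases are the same one-entry argument: a single entry of $B_{pq}$ has the form $\tfrac12\big((a+c)+i(a-c)\big)$, which lies in $\{0,1,i,-i\}$ only when $a=c$, i.e.\ $A_{pq}$ is a scalar multiple of $(1,1)^{\T}$; and the singleton--singleton case is trivial. Collecting the blocks yields $B=A$.

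I do not anticipate a genuine obstacle: the whole content is the $2\times2$ computation above, and the only care needed is the bookkeeping over the four block types and the observation that one well-chosen entry of each block already pins $A_{pq}$ down. Should one wish to bypass even the remark about $\sigma$, it suffices to write out $U_0^*\left(\begin{smallmatrix}a&b\\c&d\end{smallmatrix}\right)U_0$ in closed form and read off directly that its entries avoid $\{0,1,i,-i\}$ unless $a=d$ and $b=c$, in which case the matrix is unchanged.
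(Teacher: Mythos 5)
Your proof is correct and follows essentially the same route as the paper: block-decompose along the block-diagonal structure of $U_{k,s}$, reduce to the $2\times 2$ (and $2\times 1$, $1\times 2$) computation with $U_0$, and observe that the hypothesis forces each block into a $U_0$-fixed shape. The only cosmetic difference is that you extract the constraints $a=d$, $b=c$ from a single entry of $U_0^*A_{pq}U_0$ and invoke commutation with $\left(\begin{smallmatrix}0&1\\1&0\end{smallmatrix}\right)$, whereas the paper enumerates the admissible $2\times2$ blocks directly; both yield the same set of fixed blocks.
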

\begin{proof}
Write \begin{equation}
A=\left(\begin{matrix}
A_{1,1}&A_{1,2}&\cdots&A_{1,k}&A_{1,k+1}\\
A_{2,1}&A_{2,2}&\cdots&A_{2,k}&A_{2,k+1}\\
\cdots&\cdots&\cdots&\cdots&\cdots\\
A_{k,1}&A_{k,2}&\cdots&A_{k,k}&A_{k,k+1}\\
A_{k+1,1}&A_{k+1,2}&\cdots&A_{k+1,k}&A_{k+1,k+1}\\
\end{matrix}
\right),
\end{equation}
where $A_{j,j}$ is a square matrix of order 2 for $j\in[k]$, and $A_{k+1,k+1}$ is of order $s$. We have
\begin{equation}
U_{k,s}^*AU_{k,s}=\left(\begin{matrix}
U_0^*A_{1,1}U_0&U_0^*A_{1,2}U_0&\cdots&U_0^*A_{1,k}U_0&U_0^*A_{1,k+1}\\
U_0^*A_{2,1}U_0&U_0^*A_{2,2}U_0&\cdots&U_0^*A_{2,k}U_0&U_0^*A_{2,k+1}\\
\cdots&\cdots&\cdots&\cdots&\cdots\\
U_0^*A_{k,1}U_0&U_0^*A_{k,2}U_0&\cdots&U_0^*A_{k,k}U_0&U_0^*A_{k,k+1}\\
A_{k+1,1}U_0&A_{k+1,2}U_0&\cdots&A_{k+1,k}U_0&A_{k+1,k+1}\\
\end{matrix}
\right).
\end{equation}
Let $\Omega_1$ denote the set of all (0,1)-matrices $C$ of order 2 such that each entry of $U_0^*CU_0$ belongs to $\{0,1,i,-i\}$. Direct calculation shows that
$$\Omega_1=\left\{\left(\begin{matrix}0&0\\0&0\\\end{matrix}\right),\left(\begin{matrix}1&0\\0&1\\\end{matrix}\right),\left(\begin{matrix}0&1\\1&0\\\end{matrix}\right),\left(\begin{matrix}1&1\\1&1\\\end{matrix}\right)\right\}$$ and moreover, $U_0^*CU_0=C$ for each $C\in \Omega_1$. This proves that $U_0^*A_{i,j}U_0=A_{i,j}$ for $i,j\in[k]$. Similar argument shows that each column (resp. row) of $A_{i,k+1}$ (resp. $A_{k+1,i}$) is either all-zero or all-one. Therefore, $U_0^*A_{i,k+1}=A_{i,k+1}$ and $A_{k+1,i}U_0=A_{k+1,i}$. This completes the proof.
\end{proof}
The following corollary verifies Conjecture \ref{mainconjsim} for the special case that $G$ contains no directed edges.
\begin{corollary}
Let $G$ be an undirected graph satisfying the conditions of Theorem~\ref{main}. Then for any $U\in \mathscr{U}_G$, $\ell(U)\neq 1+i$.
\end{corollary}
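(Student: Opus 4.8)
The plan is a proof by contradiction: assume some $U\in\mathscr{U}_G$ has $\ell(U)=1+i$, and show that $U$ must in fact be a permutation matrix, forcing $\ell(U)=1$. First I would fix $H\in\mathcal{G}_n^{sc}$ with $U\in\mathscr{U}_G(H)$, so that $U$ is Gaussian rational unitary, $Ue=e$, and $U^*A(G)U=A(H)$. Since $Ue=e$ and $\ell(U)=1+i$, Lemma~\ref{stdform} produces permutation matrices $P,Q$ with $PUQ=U_{k,s}$, where $k\ge 1$ and $s=n-2k$. Substituting $U=P^{\T}U_{k,s}Q^{\T}$ (using $P^{-1}=P^{\T}$, $Q^{-1}=Q^{\T}$ and that $P,Q$ are real) into $U^*A(G)U=A(H)$ gives $U_{k,s}^*\bigl(PA(G)P^{\T}\bigr)U_{k,s}=Q^{\T}A(H)Q$. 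Conjugating a Hermitian adjacency matrix by a permutation matrix is just a vertex relabeling, so $PA(G)P^{\T}=A(G')$ and $Q^{\T}A(H)Q=A(H')$ for mixed graphs $G'\cong G$, $H'\cong H$; crucially, since $G$ is undirected, $A(G')$ is an ordinary $(0,1)$-matrix, while $A(H')$ has entries in $\{0,1,i,-i\}$.

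Next I would apply Lemma~\ref{AEB} with $A:=A(G')$, a $(0,1)$-matrix of order $n=2k+s$, and $B:=A(H')=U_{k,s}^*A(G')U_{k,s}$, whose entries lie in $\{0,1,i,-i\}$: the lemma yields $A(G')=A(H')$. Hence $G'\cong H'$ and therefore $G\cong H$, so there is a permutation matrix $R$ with $R^*A(G)R=A(H)$ and $Re=e$, i.e.\ $R\in\mathscr{U}_G(H)$.

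To finish, I would use that the square-free hypothesis of Theorem~\ref{main} forces $\det W\neq 0$ (since $0$ is not square-free in $\mathbb{Z}[i]$). By Theorem~\ref{reduce2unitary}, whenever $\mathscr{U}_G(H)$ is nonempty it equals the singleton $\{W(G)W(H)^{-1}\}$; as both $U$ and $R$ lie in it, $U=R$ is a permutation matrix, so $\ell(U)=1$, contradicting $\ell(U)=1+i$. Combined with Theorem~\ref{main}, this recovers Theorem~\ref{mainforgraph} and strengthens it to the class $\mathcal{G}_n^{sc}$.

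No serious analytic difficulty is expected here, since Lemmas~\ref{stdform} and~\ref{AEB} carry the structural weight. The step requiring the most care is the transport of the conjugation $X\mapsto U^*XU$ through the normal form $PUQ=U_{k,s}$: one must track the conjugate transposes of the real permutation matrices correctly, verify that absorbing $P$ into a relabeling of $G$ and $Q$ into a relabeling of $H$ is legitimate, and confirm that the hypotheses of Lemma~\ref{AEB} ($(0,1)$-input and $\{0,1,i,-i\}$-output, with $2k+s=n$) hold exactly after this reduction.
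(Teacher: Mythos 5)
Your proposal is correct and follows essentially the same route as the paper: reduce $U$ to the normal form $U_{k,s}$ via Lemma~\ref{stdform}, absorb the permutations into relabelings of $G$ and $H$, and apply Lemma~\ref{AEB} to conclude $A(G')=A(H')$. The only (harmless) difference is the final contradiction: you invoke the uniqueness of the element of $\mathscr{U}_G(H)$ (from $\det W\neq 0$ and Theorem~\ref{reduce2unitary}) to force $U$ to be a permutation matrix, whereas the paper derives $U_{k,n-2k}^*W(G_1)=W(G_1)$ and uses the invertibility of $W(G_1)$ to reach the same contradiction.
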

\begin{proof}
We prove the corollary by contradiction. Suppose  $\ell(U)= 1+i$  and $\{U\}=\mathscr{U}_G(H)$  for some $H\in \mathcal{G}_n^{sc}$. Then
$U^*A(G)U=A(H)$.

By Lemma \ref{stdform}, there exist two permutation matrices $P$ and $Q$ such that $PUQ=U_{k,n-2k}$, i.e., $U=P^*U_{k,n-2k}Q^*$ for some $k\ge 1$. Therefore, we have

\begin{equation}\label{PAQ}
(P^*U_{k,n-2k}Q^*)^*A(G)P^*U_{k,n-2k}Q^*=A(H).
\end{equation}
Write $A_1=PA(G)P^*$, $B_1=Q^*A(H)Q$ and let $G_1,H_1$ be two graphs with adjacency matrices $A_1$ and $B_1$ respectively. Now (\ref{PAQ}) is equivalent to
\begin{equation}\label{PAQsim}
U_{k,n-2k}^*A_1U_{k,n-2k}=B_1.
\end{equation}
It follows from Lemma \ref{AEB} that $A_1=B_1$, i.e., $G_1=H_1$. Now, from (\ref{PAQsim}), we have
 \begin{equation}\label{UW}
 U_{k,n-2k}^*W(G_1)=W(H_1)=W(G_1).
 \end{equation}
  As $W(G_1)=PW(G)$ and $\det W(G)\neq 0$, we have $\det W(G_1)\neq 0$, that is $W(G_1)$ is invertible. Note that $U_{k,n-2k}^*$ is not the identity matrix. This contradicts (\ref{UW}) and hence completes the proof.
  \end{proof}
  \section{Concluding remarks}
  In this paper, we are mainly concerned with the generalized spectral characterizations of self-converse mixed graphs. Given a self-converse mixed graph $G$ of order $n$ such that $\frac{\det W}{2^{\lfloor\frac{n}{2}\rfloor}}$ (which is always a real or pure imaginary integer) is square-free in $\mathbb{Z}[i]$, we showed that for any self-converse mixed graph $H$ that is $\mathbb{R}$-cospectral to $G$, there exists a Gaussian rational unitary matrix $U$ such that $Ue=e$, $U^*A(G)U=A(H)$ and $(1+i)U$ is a Gaussian integral matrix. Such a unitary matrix $U$ is very close to a permutation matrix, and therefore gives strong evidences for the conjecture that self-converse mixed graphs satisfying the above condition are DGS. Our main result also implies that for an ordinary graph $G$ (viewed as a mixed graph) satisfying the above property, any self-converse mixed graph $H$ that is $\mathbb{R}$-cospectral with $G$ is isomorphic to $G$. This strengthens a recent result of the first author~\cite{wang2017JCTB}.
  However, regarding Conjecture 7, new insights and techniques are still needed to eliminate the possibility that $\ell(U)=1+i$. We leave it as an interesting and challenging future work.

 

\begin{thebibliography}{99}
 \bibitem{akbaria} S. Akbari, A. Ghafaria, M. Nahvib, M.A. Nematollahia, Mixed paths and cycles determined by their spectrum, \emph{Linear Algebra Appl}., 586 (2020) 325-346.
 \bibitem{mohar} B. Mohar, Hermitian adjacency spectrum and switching equivalence of mixed graphs, \emph{Linear Algebra Appl}., 489 (2016) 324-340.
  \bibitem{ervdamLAA2003} E.R. van Dam, W.H. Haemers, Which graphs are determined by their spectrum? \emph{Linear Algebra Appl}., 373 (2003) 241-272.
 \bibitem{ervdamDM2009} E.R. van Dam, W.H. Haemers, Developments on spectral characterizations of graphs, \emph{Discrete Math.}, 309 (2009) 576-586.
 \bibitem{guo2017JGT}K. Guo, B. Mohar, Hermitian adjacency matrix of digraphs and mixed graphs,\emph{ J. Graph Theory}, 85 (1)(2017) 324-340.
 \bibitem{haemers} W.H. Haemers, Are almost all graphs determined by their spectrum? http://members.upc.nl/w.haemers/sams.pdf.
 \bibitem{johnson1980JCTB} C.R. Johnson, M.~Newman, A note on cospectral graphs, \emph{J. Combin. Theory, Ser. B}, 28 (1980) 96-103.
  \bibitem{liu2015LAA}J. Liu, X. Li, Hermitian-adjacency matrices and Hermitian energies of mixed graphs, \emph{Linear Algebra Appl.}, 466 (2015) 182-207.
 \bibitem{qiutoappearDM} L. Qiu, Y. Ji, W. Wang, A new arithmetric criterion for graphs being determined by their generalized $Q$-spectrum, \emph{Discrete Math}., 342 (2019) 2770-2782.
\bibitem{qiu2019EJC} L. Qiu , Y. Ji, W. Wang, On the generalized spectral characterizations of Eulerian graphs, \emph{Elec. J. Combin.,} 26 (1) (2019) \#P9.
 \bibitem{wang2013ElJC} W. Wang, Generalized spectral characterization revisited, \emph{Elec. J. Combin.}, 20 (4) (2013)\#P4.
 \bibitem{wang2006PhD} W. Wang, \emph{On the spectral characterization of graphs}, PhD Thesis, Xi'an Jiaotong University, 2006 (in Chinese).
  \bibitem{wang2017JCTB} W. Wang, A simple arithmetric criterion for graphs being determined by their generalized spectra, \emph{J. Combin. Theory, Ser. B}, 122 (2017) 438-451.
 \bibitem{wang2006LAA} W. Wang, C.-X. Xu, An excluding algorithm for testing whether a family of graphs are determined by their generalized spectra, \emph{Linear Algebra Appl}., 418 (2006) 62-74.
 \bibitem{wang2006EuJC} W. Wang, C.-X. Xu, A sufficient condition for a family of graphs being determined by their generalized spectra, \emph{European J. Combin}., 27 (2006) 826-840.
 \bibitem{wissing} P. Wissing, E.R. van Dam, \emph{The negative tetrahedron and the first infinite family of connected digraphs that are strongly determined by the Hermitian spectrum}, arXiv:1903.09531.
 \end{thebibliography}
\end{document}